\newtheorem{example}{Example}[section]
\numberwithin{equation}{section}
\numberwithin{figure}{section}
\font\tenmsb=msbm5    \textfont\msbfam=\tenmsb \font\sevenmsb=msbm5
\font\fivemsb=msbm5
\font\tenbig=msbm5 scaled \magstep2   \textfont\bigfam=\tenbig
\font\sevenbig=msbm7 scaled \magstep2 \scriptfont\bigfam=\sevenbig
\font\fivebig=msbm5 scaled \magstep2
\newtheorem{theorem}{Theorem}[section]
\newtheorem{lemma}[theorem]{Lemma}
\newtheorem{definition}[theorem]{Definition}
\theoremstyle{definition}
\newtheorem{remark}{Remark}[section]
\begin{document}

\title{\bf Riesz transform associated with the fractional Fourier transform and applications  in image edge detection   
 \footnote{This work was partially supported by the National Natural Science Foundation of China (Nos. 12071197, 11701251  and 12071052), the Natural Science Foundation of Shandong Province (Nos.   ZR2019YQ04), a Simons Foundation Fellows Award (No. 819503) and a Simons Foundation Grant (No. 624733).}}

\author{\bf Zunwei Fu$^a$, Loukas Grafakos$^b$, Yan Lin$^c$, Yue Wu$^a$, Shuhui Yang$^c$  
\\  \it{\small{$^a$School of Mathematics and Statistics, Linyi University, Linyi 276000, China}}\\  \it{\small{$^b$Department of Mathematics, University of Missouri, Columbia MO 65211, USA}}\\  \it{\small{$^c$School of Science, China University of Mining and Technology, Beijing 100083,  China} }  }

\renewcommand{\thefootnote}{}

\date{}

\maketitle
\footnotetext{ Email addresses:   zwfu@mail.bnu.edu.cn(Zunwei Fu),   grafakosl@missouri.edu(Loukas Grafakos),  linyan@cumtb.edu.cn(Yan Lin),    wuyue@lyu.edu.cn(Yue Wu),   yang\_shu\_hui@163.\ com(Shuhui Yang)}
\begin{minipage}{13.5cm}
{\bf Abstract}

\quad \quad The fractional Hilbert transform was introduced by Zayed \cite[Zayed, 1998]{z2} and has been widely used in  signal processing. In view of is connection with the fractional Fourier transform, Chen, the first, second and fourth authors of this paper in \cite[Chen  et al., 2021]{cfgw} studied the fractional Hilbert transform and other fractional multiplier operators  on the real line. The present paper is concerned  with a natural extension of the fractional Hilbert transform to higher dimensions: this extension is the fractional Riesz transform which is defined  by multiplication which a suitable chirp function on the fractional Fourier transform side. In  addition to a thorough study of the fractional Riesz transforms, 
in this work we also   investigate the 
boundedness of    singular integral  operators with chirp functions on rotation invariant spaces, 
chirp Hardy spaces and their relation to chirp BMO spaces, as well as     applications of the theory of fractional multipliers in partial differential equations. Through numerical simulation, we provide   physical and geometric interpretations of   high-dimensional fractional multipliers. Finally, we present an  application of the fractional Riesz transforms in edge detection which verifies a 
hypothesis insinuated in \cite[Xu  et al.,  2016]{xxwqwy}.  In fact our  numerical implementation  confirms  that   amplitude, phase, and direction information can be simultaneously extracted  by controlling the order of the fractional Riesz transform. 
\medskip

{\bf Keywords} \quad fractional Fourier transform,  fractional Riesz transform,   edge detection, chirp Hardy space, fractional multiplier
   
\medskip
 
\medskip 


\end{minipage}
\\
\medskip
\begin{footnotesize}
\begin{spacing}{-1.0}
\vspace{-20pt}
  \tableofcontents
  \end{spacing}
\end{footnotesize}

\medskip

\medskip


\section{Introduction}

\label{sect:intro}

\ \ \ \ One of the fundamental operators in Fourier analysis theory is the Hilbert transform 
$$
H(f)(x)=\frac{1}{\pi}\ {\rm p}.{\rm v}.\int_\mathbb{R} \frac{f(y)}{x-y}dy, 
$$
which   is a continuous analogue of  the
 conjugate Fourier series.  The  early studies of the Hilbert transform 
were based on   complex analysis methods but around the 1920s these were complemented and enriched  by real analysis techniques. The Hilbert transform, being the prototype of  singular integrals,  provided significant inspiration for the subsequent development of this subject. The work of Calder\'{o}n  and Zygmund  \cite{cz} in 1952  furnished extensions of singular integrals to $\mathbb{R}^n$. This theory has left a big impact in analysis in view of its many applications, especially in the field of partial differential equations. Nowadays,  singular integral  operators   are important tools in harmonic analysis but also find 
many applications in applied mathematics. For instance, the  Hilbert transform plays a 
fundamental role   in communication systems and digital signal processing systems, such as in filter, edge detection and modulation theory \cite{g3,h}. As the Hilbert transform   is given by convolution with 
the kernel $1/(\pi t)$ on the real line,  in signal processing it can be understood as the output of a linear time invariant system with an impulse response of $1/(\pi t)$.

The Fourier transform is   a   powerful tool in the analysis and processing of stationary signals.
\begin{definition}
We define the Fourier transform of a function $f$ in the Schwartz class $S(\mathbb{R}^n)$ by 
$$
\hat{f}(\xi)=\mathcal{F}(f)=\frac{1}{\left(\sqrt{2\pi}\right)^n}\int_{\mathbb{R}^n}f(x)e^{- ix\cdot\xi}dx.
$$
\end{definition}

In time-frequency analysis, the Hilbert transform is also known as a $\pi/2$-phase shifter. The Hilbert transform can be defined in terms of the Fourier transform as the following multiplier operator 
\begin{equation}\label{1.1}
\mathcal{F}(Hf)(x)=-i{\rm sgn}(x)\mathcal{F}(f)(x).
\end{equation}
It can be seen from (\ref{1.1}) that the Hilbert transform is a phase-shift converter that multiplies the positive frequency portion of the original signal by $-i$; in other words, it maintains  the same amplitude and shifts the phase by $-\pi/2$, while the negative frequency portion is shifted by $\pi/2$.

The Riesz transform is a generalization of the Hilbert transform in the $n$-dimensional case and is also a singular integral operator, with properties analogous to those of the Hilbert transform on $\mathbb{R}$. It is defined as 
\begin{eqnarray*}
R_j(f)(\boldsymbol{x})= c_n\ {\rm p}.{\rm v}.\int_{\mathbb{R}^n}\frac{x_j-y_j}{|\boldsymbol{x}-\boldsymbol{y}|^{n+1}}f(\boldsymbol{y})d\boldsymbol{y}, \ \ 1\leq j\leq n,
\end{eqnarray*}
where $c_n= {\Gamma(\frac{n+1}{2})}/{\pi^{\frac{n+1}{2}}} $.
The Riesz transform is also a multiplier operator $$\mathcal{F}(R_jf)(\boldsymbol{x})=-\frac{ix_j}{|\boldsymbol{x}|}\mathcal{F}(f)(\boldsymbol{x}).$$
\begin{remark}
The multiplier of the Hilbert transform is $-i{\rm sgn}(x)$, and it is simply a phase-shift converter. The multiplier of the Riesz transform is $- ix_j/|\boldsymbol{x}|$, and thus, the Riesz transform is not only a phase-shift converter but also an amplitude attenuator.
\end{remark}
The Riesz transform  has wide applications in image edge detection, image quality assessment and biometric feature recognition \cite{la,zl,zzm}.

 The Fourier transform is     limited  in processing and analyzing  nonstationary signals. The fractional Fourier transform (FRFT) was proposed and developed by some scholars mainly because of the need for  nonstationary signals. The FRFT originated in the work of Wiener in \cite{w2}.  Namias in \cite{n} proposed the FRFT through a   method that was primarily based on eigenfunction expansions in 1980. McBride-Kerr in \cite{mk} and Kerr in \cite{k} provided   integral expressions of the FRFT on $S(\mathbb{R} )$ and $L^2(\mathbb{R} )$, respectively.    In \cite{cfgw},   Chen, and  the first, second and fourth authors of this paper, established   the behavior of FRFT on $L^p(\mathbb{R} )$ for  $1\leq p<2$.

A chirp function  is a nonstationary signal in which the frequency increases (upchirp) or decreases (downchirp) with time. The chirp signal is the most common nonstationary signal. In  1998,  Zayed in \cite{z2} gave the following definition of the fractional Hilbert transform 
\begin{align*}
H_\alpha(f)(x)=  \frac{1}{\pi}\ {\rm p}.{\rm v}.\ e_{-\alpha}(x)  \int_\mathbb{R} \frac{f(y)}{x-y}e_\alpha(y)dy,
\end{align*}
where $e_\alpha(x)=e^{\frac{ix^2\cot\alpha}{2}} $ is a chirp function. 

In \cite{py},  Pei and Yeh expressed the discrete fractional Hilbert transform as a composition of the discrete fractional Fourier transform (DFRFT), 
a multiplier, 
and the inverse DFRFT; based on this they conducted simulation verification on the edge detection of digital images.  In \cite{cfgw}, Chen, and the first, second and fourth authors of this paper 
related  the fractional Hilbert transform to the fractional Fourier   multiplier
$$
\mathcal{F}_{\alpha}(H_{\alpha} f)(x)=-i{\rm sgn}((\pi-\alpha)x)\mathcal{F}_{\alpha}( f)(x),
$$
where $\mathcal{F}_{\alpha}$ is FRFT; see Definition \ref{de1.1}.
In analogy with the Hilbert transform, the fractional Hilbert transform is  also a phase-shift converter. As 
indicated above,   the continuous fractional Hilbert transform can be decomposed into a composition of 
the FRFT, a multiplier, and the inverse FRFT. 
The fractional Hilbert transform can  also  be used in single sideband communication systems and image encryption systems. The rotation angle can be used as the encryption key to improve the communication security and image encryption effect in \cite{tlw}.

The  multidimensional FRFT has recently made its appearance:   Zayed   \cite{z3,z4} introduced a new two-dimensional FRFT. In \cite{kr},  Kamalakkannan  and  Roopkumar introduced the the multidimensional FRFT.
\begin{definition}\label{de1.1}\emph{(\cite{kr})}
The  multidimensional FRFT with order $\boldsymbol{\alpha}=(\alpha_1,\alpha_2,\dots,\alpha_n)$  on $L^1(\mathbb{R}^n)$ is defined by
$$\mathcal{F}_{\boldsymbol{\alpha}}(  f)(\boldsymbol{u})=\int_{\mathbb{R}^n}f(\boldsymbol{x})K_{\boldsymbol{\alpha}}(\boldsymbol{x},\boldsymbol{u})d\boldsymbol{x},$$
where $K_{\boldsymbol{\alpha}}(\boldsymbol{x},\boldsymbol{u}) =\prod^n_{k=1}K_{\alpha_k}(x_k,u_k)$ and $K_{\alpha_k}(x_k,u_k)$ are given by
\begin{equation*}
K_{\alpha_k}(x_k,u_k)=\left\{
\begin{array}
[c]{ll}
\frac{c(\alpha_k)}{\sqrt{2\pi}}e^{i(a(\alpha_k)(x_k^2+u_k^2-2b(\alpha_k)x_ku_k))},  &\alpha_k\notin\pi \mathbb{Z} ,\\
\delta(x_k-u_k), &  \alpha_k\in2\pi \mathbb{Z} ,\\
\delta(x_k+u_k),  &    \alpha_k\in2\pi\mathbb{Z}+\pi ,
\end{array}
\right.
\end{equation*}
 $\boldsymbol{x}=(x_1,x_2,\dots,x_n)$, $a(\alpha_k)=\frac{\cot(\alpha_k)}{2}$, $b(\alpha_k)=\sec( \alpha_k )$, $c(\alpha_k)=\sqrt{1-i\cot(\alpha_k)}$.
\end{definition}
\begin{remark}
Suppose that $\boldsymbol{\alpha}=(\alpha_1,\alpha_2,\dots,\alpha_n)\in \mathbb{R}^n$ with $\alpha_k\notin \pi \mathbb{Z}$ for all $ k=1,2,\dots,n $. Consider the chirps 
$$e_{\boldsymbol{\alpha}}(\boldsymbol{x})=e^{i\sum^n_{k=1}a(\alpha_k)x^2_k}, $$
 for    $\boldsymbol{x}\in \mathbb{R}^n.$
It is straightforward to observe that FRFT of $f$ can be written as
$$\mathcal{F}_{\boldsymbol{\alpha}}(  f)(\boldsymbol{u})=c(\boldsymbol{\alpha} )e_{\boldsymbol{\alpha}}(\boldsymbol{u})\mathcal{F}(e_{\boldsymbol{\alpha}} f)(\tilde{\boldsymbol{u}}),$$
where  $c(\boldsymbol{\alpha} )=c(\alpha_1 )\cdots c(\alpha_n )$,  $\tilde{\boldsymbol{u}}=(u_1\csc\alpha_1,\dots,u_n\csc\alpha_n)$. From the preceding identity, it can be seen that $\mathcal{F}_{\boldsymbol{\alpha}} $ 
is bounded from $S(\mathbb{R}^n)$ to $ S(\mathbb{R}^n)$. We  rewrite
$$K_{\boldsymbol{\alpha }}(\boldsymbol{x} ,\boldsymbol{u} )=\frac{c(\boldsymbol{\alpha} )}{\left(\sqrt{2\pi}\right)^{n}}e_{\boldsymbol{\alpha}}(\boldsymbol{x})e_{\boldsymbol{\alpha}}(\boldsymbol{u})e^{-i\sum^{n}_{k=1}x_ku_k\csc\alpha_k}.$$
\end{remark}
Motivated by this work,  we define the fractional Riesz transform  associated with  the multidimensional FRFT as follows:
\begin{definition} For $1\leq j\leq n$, the $j$th fractional Riesz transform of $f\in S(\mathbb{R}^n)$ is given by
\begin{align*}
R_j^{\boldsymbol{\alpha}}(f)(\boldsymbol{x})=c_n \ {\rm p}.{\rm v}. \ e_{-\boldsymbol{\alpha}}(\boldsymbol{x})\int_{\mathbb{R}^n}\frac{x_j-y_j}{|\boldsymbol{x}-\boldsymbol{y}|^{n+1}}f(\boldsymbol{y})e_{ \boldsymbol{\alpha}}(\boldsymbol{y})d\boldsymbol{y},
\end{align*}
where $c_n=\Gamma(\frac{n+1}{2})/\pi^{\frac{n+1}{2}}$  and $\boldsymbol{\alpha}=(\alpha_1,\dots,\alpha_n)\in \mathbb{R}^n$ with $\alpha_k\notin \pi \mathbb{Z},\  k=1,2,\dots,n$.
\end{definition}
\begin{remark}
The fractional Riesz transform  reduces to  the fractional Hilbert transform for   $n=1$, while the fractional Riesz transform   reduces to  the classical Riesz transform for  $\boldsymbol{\alpha}=(\frac{\pi}{2}+k_1\pi,\frac{\pi}{2}+k_2\pi,\dots, \frac{\pi}{2}+k_n\pi),\  k_j\in \mathbb{Z} $,  $j=1,2,\dots,m$.
\end{remark}
This paper will be organized as follows. In Section 2, we obtain characterizations of    the fractional Riesz transform in terms of the FRFT and we note  that the  fractional Riesz transform is not only a phase shift converter but also an amplitude attenuator.  We obtain the  identity   $\sum^n_{j=1} (R_j^{\boldsymbol{\alpha}} )^2=-I$ and the boundedness of singular integral operators with a chirp function  on rotation invariant spaces. In Section 3, we introduce the definition of the chirp Hardy space by taking the Possion maximum for the function with the chirp factor and study  the dual spaces of chirp Hardy spaces.  We also  characterize   the boundedness of  singular integral operators with chirp functions on chirp Hardy spaces.    In Section 4, we derive a formula for  the high-dimensional FRFT and we provide an application of the fractional Riesz transform  
to partial differential equations. In Section 5, we conduct   a simulation experiment   with the fractional Riesz transform on an image and give the physical and geometric interpretation of the high-dimensional fractional multiplier theorem. In Section 6, we discuss a situation where  it is difficult to directly use the fractional Riesz transform for edge detection but the fractional multiplier theorem provides this possibility. The use of the fractional Riesz transform is completely equivalent to the compound operation of the FRFT, inverse FRFT and multiplier, and the FRFT and inverse FRFT can realize fast operations.
\section{Fractional Riesz transforms}

\label{sect:L1}

 \subsection{Properties of the fractional Riesz transforms}
 
\begin{theorem}\label{2th:1}
The $j$th fractional Riesz transform $R_j^{\boldsymbol{\alpha}}$ is given on the FRFT side by multiplication by the function $-i\frac{\tilde{u}_j}{|\tilde{\boldsymbol{u}}|}$. That is,  for any  $f\in S(\mathbb{R}^n)$  we have
$$\mathcal{F}_{\boldsymbol{\alpha}}\left(R_j^{\boldsymbol{\alpha}} f\right)(\boldsymbol{u})=-i\frac{\tilde{u}_j}{|\tilde{\boldsymbol{u}}|}\mathcal{F}_{\boldsymbol{\alpha}}\left( f\right)(\boldsymbol{u}),$$
where $ \boldsymbol{\alpha} =(\alpha_1 ,\dots, \alpha_n )\in \mathbb{R}^n$ with $\alpha_k\notin \pi \mathbb{Z}, \ k=1,2,\dots,n$; $ \boldsymbol{u} =(u_1 ,\dots, u_n )$ and $\tilde{\boldsymbol{u}}=(u_1\csc\alpha_1,\dots,u_n\csc\alpha_n)=(\tilde{u}_1 ,\dots,\tilde{u}_n )$.
\end{theorem}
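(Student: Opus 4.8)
The plan is to reduce the statement to the classical Riesz multiplier identity by peeling off the chirp factors. The starting point is the elementary observation that, since $a(-\alpha_k)=\tfrac{1}{2}\cot(-\alpha_k)=-\tfrac{1}{2}\cot(\alpha_k)=-a(\alpha_k)$, one has $e_{-\boldsymbol{\alpha}}(\boldsymbol{x})\,e_{\boldsymbol{\alpha}}(\boldsymbol{x})=1$ pointwise, and that the factor $e_{-\boldsymbol{\alpha}}(\boldsymbol{x})$ does not depend on the integration variable $\boldsymbol{y}$, so it may be pulled outside the truncated integrals defining the principal value without affecting the limit. This yields the factorization
$$R_j^{\boldsymbol{\alpha}}(f)(\boldsymbol{x})=e_{-\boldsymbol{\alpha}}(\boldsymbol{x})\,R_j\!\left(e_{\boldsymbol{\alpha}}f\right)(\boldsymbol{x}),$$
where $R_j$ denotes the ordinary $j$th Riesz transform. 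Before using it I would record that $e_{\boldsymbol{\alpha}}f\in S(\mathbb{R}^n)$ whenever $f\in S(\mathbb{R}^n)$: the chirp $e_{\boldsymbol{\alpha}}$ is smooth, has modulus one, and each of its derivatives grows at most polynomially, so by the Leibniz rule every derivative of $e_{\boldsymbol{\alpha}}f$ still decays faster than any polynomial. Hence $R_j(e_{\boldsymbol{\alpha}}f)$ is a well-defined Schwartz function and the classical Fourier theory applies to it.

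Next I would invoke the identity recorded in the Remark following Definition \ref{de1.1}, namely $\mathcal{F}_{\boldsymbol{\alpha}}(g)(\boldsymbol{u})=c(\boldsymbol{\alpha})\,e_{\boldsymbol{\alpha}}(\boldsymbol{u})\,\mathcal{F}(e_{\boldsymbol{\alpha}}g)(\tilde{\boldsymbol{u}})$, applied to $g=R_j^{\boldsymbol{\alpha}}f$. The factorization above together with $e_{\boldsymbol{\alpha}}e_{-\boldsymbol{\alpha}}=1$ gives $e_{\boldsymbol{\alpha}}\,R_j^{\boldsymbol{\alpha}}f=R_j(e_{\boldsymbol{\alpha}}f)$, whence
$$\mathcal{F}_{\boldsymbol{\alpha}}\!\left(R_j^{\boldsymbol{\alpha}}f\right)(\boldsymbol{u})=c(\boldsymbol{\alpha})\,e_{\boldsymbol{\alpha}}(\boldsymbol{u})\,\mathcal{F}\!\left(R_j(e_{\boldsymbol{\alpha}}f)\right)(\tilde{\boldsymbol{u}}).$$
Applying the classical Riesz multiplier formula $\mathcal{F}(R_j h)(\boldsymbol{\xi})=-i\,\xi_j/|\boldsymbol{\xi}|\,\mathcal{F}(h)(\boldsymbol{\xi})$ with $h=e_{\boldsymbol{\alpha}}f$ and $\boldsymbol{\xi}=\tilde{\boldsymbol{u}}$ — legitimate for a.e.\ $\boldsymbol{u}$, since $\csc\alpha_k\neq 0$ forces $\tilde{\boldsymbol{u}}=\boldsymbol{0}$ only when $\boldsymbol{u}=\boldsymbol{0}$ — produces
$$\mathcal{F}_{\boldsymbol{\alpha}}\!\left(R_j^{\boldsymbol{\alpha}}f\right)(\boldsymbol{u})=-i\,\frac{\tilde u_j}{|\tilde{\boldsymbol{u}}|}\,c(\boldsymbol{\alpha})\,e_{\boldsymbol{\alpha}}(\boldsymbol{u})\,\mathcal{F}\!\left(e_{\boldsymbol{\alpha}}f\right)(\tilde{\boldsymbol{u}}),$$
and recognizing the last three factors as $\mathcal{F}_{\boldsymbol{\alpha}}(f)(\boldsymbol{u})$ via the same Remark identity completes the proof.

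I do not expect a genuine obstacle: the argument is essentially bookkeeping of chirp factors. The two points that deserve a careful word are (i) the justification that $e_{-\boldsymbol{\alpha}}(\boldsymbol{x})$ may be moved outside the principal value, so that $R_j^{\boldsymbol{\alpha}}$ is literally the conjugate of $R_j$ by multiplication by $e_{\boldsymbol{\alpha}}$, and (ii) the stability of $S(\mathbb{R}^n)$ under multiplication by the chirp, which is what lets us invoke both the classical multiplier theorem and the Remark's factorization of $\mathcal{F}_{\boldsymbol{\alpha}}$ without convergence issues. Should one prefer to bypass even these, an alternative is a direct computation: insert the product form of $K_{\boldsymbol{\alpha}}$ from the Remark into $\mathcal{F}_{\boldsymbol{\alpha}}(R_j^{\boldsymbol{\alpha}}f)(\boldsymbol{u})$, carry out the $\boldsymbol{y}$-integration to expose the classical Riesz kernel acting on $e_{\boldsymbol{\alpha}}f$ in the variable $\tilde{\boldsymbol{u}}$, and do the chirp algebra by hand; this reproduces the same chain of equalities.
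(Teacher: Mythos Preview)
Your argument is correct and takes a genuinely different route from the paper's. You reduce the identity to the classical Riesz multiplier theorem by exploiting the chirp factorization $\mathcal{F}_{\boldsymbol{\alpha}}(g)(\boldsymbol{u})=c(\boldsymbol{\alpha})\,e_{\boldsymbol{\alpha}}(\boldsymbol{u})\,\mathcal{F}(e_{\boldsymbol{\alpha}}g)(\tilde{\boldsymbol{u}})$ together with the conjugation identity $R_j^{\boldsymbol{\alpha}}=e_{-\boldsymbol{\alpha}}\,R_j\,e_{\boldsymbol{\alpha}}$; this turns the theorem into a two-line consequence of the known formula $\mathcal{F}(R_j h)(\boldsymbol{\xi})=-i\,\xi_j/|\boldsymbol{\xi}|\,\mathcal{F}(h)(\boldsymbol{\xi})$. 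The paper instead performs the full computation from scratch: it writes out the FRFT integral of $R_j^{\boldsymbol{\alpha}}f$, changes variables, factors the integral into $\mathcal{F}_{\boldsymbol{\alpha}}(f)$ times a principal-value kernel integral, and then evaluates the latter in polar coordinates via Lemma~5.1.15 of \cite{g1}. Your approach is shorter and more conceptual, making transparent that the fractional theory is a chirp-conjugated version of the classical one; the paper's direct computation is self-contained and does not appeal to the classical multiplier result, which may be preferable if one wants the paper to stand independently of the Euclidean theory.

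One small inaccuracy worth fixing: $R_j(e_{\boldsymbol{\alpha}}f)$ is in general \emph{not} a Schwartz function (Riesz transforms of Schwartz functions need not decay rapidly). This does not damage your argument, since all you actually need is the Fourier multiplier identity $\mathcal{F}(R_j h)=-i\,\xi_j/|\boldsymbol{\xi}|\,\widehat{h}$ for $h\in S(\mathbb{R}^n)$, which holds in the $L^2$ (or tempered-distribution) sense and is enough to justify the chain of equalities. Simply replace the claim that $R_j(e_{\boldsymbol{\alpha}}f)\in S$ with the observation that $e_{\boldsymbol{\alpha}}f\in S$ suffices for the classical multiplier formula to apply.
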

\begin{proof}
Fix a   $f\in S (\mathbb{R}^n)$.  For $1\leq j \leq n$,  we have
\begin{align*}
\mathcal{F}_{\boldsymbol{\alpha}}\left(R_j^{\boldsymbol{\alpha}} f\right)(\boldsymbol{u})=&\  \int_{\mathbb{R}^n}R_j^{\boldsymbol{\alpha}} f (\boldsymbol{x})\frac{c(\boldsymbol{\alpha})}{\left(\sqrt{2\pi}\right)^n}e_{\boldsymbol{\alpha}}(\boldsymbol{x})e_{\boldsymbol{\alpha}}(\boldsymbol{u})e^{-i\sum^{n}_{j=1}x_ju_j\csc{\alpha_j}}d\boldsymbol{x}\\
=&\ \int_{\mathbb{R}^n}\frac{\Gamma(\frac{n+1}{2})}{\pi^{\frac{n+1}{2}}}e_{-\boldsymbol{\alpha}}(\boldsymbol{x)}
\lim_{\varepsilon\rightarrow0}\int_{|\boldsymbol{y}|\geq\varepsilon}\frac{y_j}{|\boldsymbol{y}|^{n+1}}f(\boldsymbol{x}-\boldsymbol{y})e_{\boldsymbol{\alpha}}(\boldsymbol{x}-\boldsymbol{y})d\boldsymbol{y}\\
&\ \ \ \ \ \ \ \ \ \ \ \ \ \ \ \ \ \ \   \times \frac{c(\boldsymbol{\alpha)}}{\left( \sqrt{2\pi}\right)^n} e_{\boldsymbol{\alpha}}(\boldsymbol{x})e_{\boldsymbol{\alpha}}(\boldsymbol{u}) e^{-i\sum^{n}_{j=1}x_ju_j\csc{\alpha_j}}d\boldsymbol{x}\\
=& \ \frac{\Gamma(\frac{n+1}{2})}{\pi^{\frac{n+1}{2}}}\lim_{\varepsilon\rightarrow0}\int_{|\boldsymbol{y}|\geq\varepsilon}\frac{y_j}{|\boldsymbol{y}|^{n+1}}\int_{\mathbb{R}^n} f(\boldsymbol{x}-\boldsymbol{y})e_{\boldsymbol{\alpha}}(\boldsymbol{x}-\boldsymbol{y})\frac{c(\boldsymbol{\alpha})}{\left(\sqrt{2\pi}\right)^n} e_\alpha(\boldsymbol{u})\\
&\ \ \ \ \ \ \ \ \ \ \ \ \ \ \ \ \ \ \   \times  e^{-i\sum^{n}_{j=1}x_ju_j\csc{\alpha_j}}d\boldsymbol{x} d\boldsymbol{y}.
\end{align*}
From the  substitution of variables, we have
\begin{align*}
\mathcal{F}_{\boldsymbol{\alpha}}\left(R_j^{\boldsymbol{\alpha}} f\right)(\boldsymbol{u})=&\  \frac{\Gamma(\frac{n+1}{2})}{\pi^{\frac{n+1}{2}}}\lim_{\varepsilon\rightarrow0}\int_{|\boldsymbol{y}|\geq\varepsilon}\frac{y_j}{|\boldsymbol{y}|^{n+1}}\int_{\mathbb{R}^n} f(\boldsymbol{w})e_{\boldsymbol{\alpha}}(\boldsymbol{w})\frac{c(\boldsymbol{\alpha})}{\left(\sqrt{2\pi}\right)^n} e_{\boldsymbol{\alpha}}(\boldsymbol{u})\\
&\ \ \ \ \ \ \ \ \ \ \ \ \ \ \ \ \ \ \  \times  e^{-i\sum^{n}_{j=1}(y_j+w_j)u_j\csc{\alpha_j}}d\boldsymbol{w} d\boldsymbol{y}\\
=&\  \frac{\Gamma(\frac{n+1}{2})}{\pi^{\frac{n+1}{2}}}\lim_{\varepsilon\rightarrow0}\int_{|\boldsymbol{y}|\geq\varepsilon}\frac{y_j}{|\boldsymbol{y}|^{n+1}}e^{-i\sum^{n}_{j=1} y_j u_j\csc{\alpha_j}}d\boldsymbol{y}\int_{\mathbb{R}^n}\frac{c(\alpha)}{\left(\sqrt{2\pi}\right)^n}\\
&\ \ \ \ \ \ \ \ \ \ \ \ \ \ \ \ \ \ \   \times f(\boldsymbol{w})e_{\boldsymbol{\alpha}}(\boldsymbol{u})e_{\boldsymbol{\alpha}}(\boldsymbol{w}) e^{-i\sum^{n}_{j=1} w_j u_j\csc{\alpha_j}}d\boldsymbol{w}\\
=&\  \frac{\Gamma(\frac{n+1}{2})}{\pi^{\frac{n+1}{2}}}\mathcal{F}_{\boldsymbol{\alpha}}( f)(\boldsymbol{u})\lim_{\varepsilon\rightarrow0}\int_{|\boldsymbol{y}|\geq\varepsilon}\frac{y_j}{|\boldsymbol{y}|^{n+1}}e^{-i\sum^{n}_{j=1} y_j u_j\csc{\alpha_j}}d\boldsymbol{y}\\
=&\  \frac{\Gamma(\frac{n+1}{2})}{\pi^{\frac{n+1}{2}}}\mathcal{F}_{\boldsymbol{\alpha}}( f)(\boldsymbol{u})\lim_{\varepsilon\rightarrow0}\int_{\varepsilon\leq|\boldsymbol{y}|\leq\frac{1}{\varepsilon}}\frac{y_j}{|\boldsymbol{y}|^{n+1}}e^{-i  \boldsymbol{y}\cdot\tilde{\boldsymbol{u}}  }d\boldsymbol{y}.
\end{align*}
Switching to polar coordinates and using Lemma 5.1.15 in \cite{g1}, we obtain
\begin{align*}
\mathcal{F}_{\boldsymbol{\alpha}}\left(R_j^{\boldsymbol{\alpha}} f\right)(\boldsymbol{u})=&\  \frac{-i\Gamma(\frac{n+1}{2})}{\pi^{\frac{n+1}{2}}}\mathcal{F}_{\boldsymbol{\alpha}}( f)(\boldsymbol{u})\lim_{\varepsilon\rightarrow0}\int_{s^{n-1}}\int_{\varepsilon\leq r\leq\frac{1}{\varepsilon}}\sin(r\tilde{\boldsymbol{u}}\cdot\boldsymbol{\theta} )\frac{r}{r^{n+1}}r^{n-1}dr\theta_jd\boldsymbol{\theta} \\
=&\  \frac{-i\Gamma(\frac{n+1}{2})}{\pi^{\frac{n+1}{2}}}\mathcal{F}_{\boldsymbol{\alpha}}( f)(\boldsymbol{u}) \int_{s^{n-1}}\int_0^\infty \sin(r\tilde{\boldsymbol{u}}\cdot\boldsymbol{\theta} ) \frac{dr}{r}\theta_jd\boldsymbol{\theta}\\
=&\  \frac{-i\Gamma(\frac{n+1}{2})}{2\pi^{\frac{n-1}{2}}} \mathcal{F}_{\boldsymbol{\alpha}}( f)(\boldsymbol{u}) \int_{s^{n-1}} {\rm sgn}(\tilde{\boldsymbol{u}}\cdot\boldsymbol{\theta})\theta_jd\boldsymbol{\theta}\\
=&\  -i\frac{\tilde{u}_j}{|\tilde{\boldsymbol{u}}|}\  \mathcal{F}_{\boldsymbol{\alpha}}( f)(\boldsymbol{u}),
\end{align*}
which completes the proof of the theorem.
\end{proof}

\begin{lemma}\label{4th:4}(FRFT inversion theorem) \ \emph{(\cite{kr})}
Suppose $f\in S(\mathbb{R}^n)$. Then
$$f(\boldsymbol{x})=\int_{\mathbb{R}^n}\mathcal{F}_{\boldsymbol{\alpha}}(  f)(\boldsymbol{u})K_{-\boldsymbol{\alpha}}(\boldsymbol{u},\boldsymbol{x})d\boldsymbol{u},\ \ a.e.\ x\in\mathbb{R}^n.$$
\end{lemma}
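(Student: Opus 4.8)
The plan is to reduce the fractional Fourier inversion to the classical Fourier inversion theorem by means of the chirp identity recorded in the Remark following Definition~\ref{de1.1}. Since $K_{\boldsymbol{\alpha}}$ is a tensor product over the coordinates and the factor $K_{\alpha_k}$ is a Dirac mass whenever $\alpha_k\in\pi\mathbb{Z}$, Fubini's theorem lets one treat the coordinates separately, the coordinates with $\alpha_k\in\pi\mathbb{Z}$ contributing only the trivial inversions coming from the reproducing property of the Dirac mass. Hence it suffices to treat the generic case $\alpha_k\notin\pi\mathbb{Z}$ for every $k$, which is the one used throughout the rest of the paper.

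Assume then $\alpha_k\notin\pi\mathbb{Z}$ for all $k$ and put $g=e_{\boldsymbol{\alpha}}f$. Since $e_{\boldsymbol{\alpha}}$ is smooth with all partial derivatives of polynomial growth and $f\in S(\mathbb{R}^n)$, we have $g\in S(\mathbb{R}^n)$; the Remark then yields $\mathcal{F}_{\boldsymbol{\alpha}}(f)(\boldsymbol{u})=c(\boldsymbol{\alpha})\,e_{\boldsymbol{\alpha}}(\boldsymbol{u})\,\widehat{g}(\tilde{\boldsymbol{u}})$. Next I would write out $K_{-\boldsymbol{\alpha}}(\boldsymbol{u},\boldsymbol{x})$ from the rewritten kernel formula in that Remark, using $a(-\alpha_k)=-a(\alpha_k)$, $\csc(-\alpha_k)=-\csc\alpha_k$ and $c(-\alpha_k)=\overline{c(\alpha_k)}$ for the principal branch, to get
$$
K_{-\boldsymbol{\alpha}}(\boldsymbol{u},\boldsymbol{x})=\frac{c(-\boldsymbol{\alpha})}{\left(\sqrt{2\pi}\right)^n}\,e_{-\boldsymbol{\alpha}}(\boldsymbol{u})\,e_{-\boldsymbol{\alpha}}(\boldsymbol{x})\,e^{i\boldsymbol{x}\cdot\tilde{\boldsymbol{u}}}.
$$

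Substituting both expressions into the right-hand side of the claimed identity and using $e_{\boldsymbol{\alpha}}(\boldsymbol{u})e_{-\boldsymbol{\alpha}}(\boldsymbol{u})=1$ to cancel the chirps in $\boldsymbol{u}$, one is left with
$$
\int_{\mathbb{R}^n}\mathcal{F}_{\boldsymbol{\alpha}}(f)(\boldsymbol{u})\,K_{-\boldsymbol{\alpha}}(\boldsymbol{u},\boldsymbol{x})\,d\boldsymbol{u}=\frac{c(\boldsymbol{\alpha})c(-\boldsymbol{\alpha})}{\left(\sqrt{2\pi}\right)^n}\,e_{-\boldsymbol{\alpha}}(\boldsymbol{x})\int_{\mathbb{R}^n}\widehat{g}(\tilde{\boldsymbol{u}})\,e^{i\boldsymbol{x}\cdot\tilde{\boldsymbol{u}}}\,d\boldsymbol{u}.
$$
Performing the diagonal change of variables $\boldsymbol{v}=\tilde{\boldsymbol{u}}$, whose Jacobian is $\prod_{k}|\sin\alpha_k|$, and applying the classical Fourier inversion theorem on $S(\mathbb{R}^n)$ to $g$ turns the last integral into $\prod_{k}|\sin\alpha_k|\,\bigl(\sqrt{2\pi}\bigr)^{n}g(\boldsymbol{x})$. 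It then remains only to evaluate the constant: from $(1-i\cot\alpha_k)(1+i\cot\alpha_k)=\csc^2\alpha_k$ and the fact that the principal square root sends the right half-plane into the sector $\{z:|\arg z|<\pi/4\}$, one obtains $c(\alpha_k)c(-\alpha_k)=|\csc\alpha_k|$, whence $c(\boldsymbol{\alpha})c(-\boldsymbol{\alpha})\prod_{k}|\sin\alpha_k|=1$. Therefore the right-hand side equals $e_{-\boldsymbol{\alpha}}(\boldsymbol{x})g(\boldsymbol{x})=e_{-\boldsymbol{\alpha}}(\boldsymbol{x})e_{\boldsymbol{\alpha}}(\boldsymbol{x})f(\boldsymbol{x})=f(\boldsymbol{x})$, and this in fact holds at every $\boldsymbol{x}\in\mathbb{R}^n$.

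The one delicate point I anticipate is the bookkeeping of the branch of $c(\alpha_k)=\sqrt{1-i\cot\alpha_k}$ and of the signs of $\sin\alpha_k$ and $\csc\alpha_k$ as $\alpha_k$ runs through the different quadrants, so that the factors $c(\boldsymbol{\alpha})$, $c(-\boldsymbol{\alpha})$ and the Jacobian of $\boldsymbol{u}\mapsto\tilde{\boldsymbol{u}}$ combine to exactly $1$; all the remaining steps (Fubini, the linear change of variables, and Fourier inversion) are entirely routine since $f$ and $g=e_{\boldsymbol{\alpha}}f$ are Schwartz functions.
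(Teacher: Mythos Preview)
The paper does not supply its own proof of this lemma; it is quoted from \cite{kr} and used as a black box. Your argument is correct and is the natural way to prove the result: reduce to the classical Fourier inversion via the chirp identity $\mathcal{F}_{\boldsymbol{\alpha}}(f)(\boldsymbol{u})=c(\boldsymbol{\alpha})\,e_{\boldsymbol{\alpha}}(\boldsymbol{u})\,\widehat{e_{\boldsymbol{\alpha}}f}(\tilde{\boldsymbol{u}})$ from the Remark after Definition~\ref{de1.1}, perform the diagonal change of variables $\boldsymbol{v}=\tilde{\boldsymbol{u}}$, and verify that the constants $c(\boldsymbol{\alpha})c(-\boldsymbol{\alpha})\prod_k|\sin\alpha_k|$ collapse to $1$. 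Your treatment of the branch issue is also correct: since $1\pm i\cot\alpha_k$ lies in the open right half-plane, the principal square roots $c(\pm\alpha_k)$ lie in the sector $|\arg z|<\pi/4$, so their product has positive real part and hence equals the positive square root $|\csc\alpha_k|$ of $\csc^2\alpha_k$.
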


By Theorem \ref{2th:1} and Lemma \ref{4th:4}, the $j$th fractional Riesz transform of order
$\boldsymbol{\alpha}$ can be rewritten as
\[
(R_j^{\boldsymbol{\alpha}}f) (\boldsymbol{x}) =\left[ \mathcal{F}_{-\boldsymbol{\alpha}}\left( -i
\frac{\tilde{u}_j}{|\tilde{\boldsymbol{u}}|} (\mathcal{F}_{\boldsymbol{\alpha}} f) (\boldsymbol{u}) \right)\right] (\boldsymbol{x}).
\]
Denote  $m_j^{\boldsymbol{\alpha}} ({\boldsymbol{u}}):=  -i {\tilde{u}_j}/{|\tilde{\boldsymbol{u}}|} $.
It can be seen  that the fractional Riesz transform of a function $f$ can be
decomposed into three simpler operators, according to the diagram of Fig.
\ref{fig:com}:

\begin{enumerate}
	[(i)]
	
	\item FRFT of order $\boldsymbol{\alpha}$, $g({\boldsymbol{u}}) =
	(\mathcal{F}_{\boldsymbol{\alpha}} f)({\boldsymbol{u}})$;
	
	\item multiplication by a fractional $L^p$ multiplier, $h({\boldsymbol{u}})
	= m_j^{\boldsymbol{\alpha}}({\boldsymbol{u}}) g({\boldsymbol{u}})$;
	
	\item FRFT of order $-\boldsymbol{\alpha}$,
	$(R_j^{\boldsymbol{\alpha} }f) (\boldsymbol{x}) = 	(\mathcal{F}_{-\boldsymbol{\alpha}} h)(\boldsymbol{x})$
\end{enumerate}

\begin{figure}[H]
	\centering
		\begin{tikzpicture}[thick]
			\node (start){$f(\boldsymbol{x})$};
			\node[node distance=18mm, rectangle,draw,right of=start]
			(FRFT){$\mathcal{F}_{\boldsymbol{\alpha}}$};
			\node[node distance=18mm, inner sep=0pt,right of=FRFT]
			(MA){$\bigotimes$};
			\node[node distance=14mm, below of =MA] (p){$m_j^{\boldsymbol{\alpha}} (\boldsymbol{u})$};
			\node[node distance=18mm, rectangle,draw,right of=MA]
			(iFRFT){$\mathcal{F}_{-\boldsymbol{\alpha}}$};
			\node[node distance=24mm, right of=iFRFT] (end){$(R_j^{\boldsymbol{\alpha} }f)
			(\boldsymbol{x})$};
			\draw[->](start)--(FRFT);
			\draw[->](FRFT)--node[above]{$g(\boldsymbol{u})$}(MA);
			\draw[->](MA)--node[above]{$h(\boldsymbol{u})$}(iFRFT);
			\draw[->](iFRFT)--(end);
			\draw[->](p)--(MA);
		\end{tikzpicture}
	\caption{The decomposition of the $j$th fractional Riesz transform.}%
	\label{fig:com}%
\end{figure}
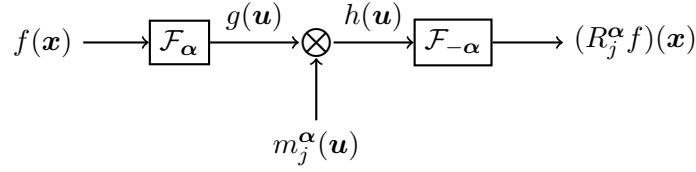

Take a 2-dimensional fractional Riesz transform as an example. It can be seen from Theorem \ref{2th:1}
that the fractional Riesz transform of order $\boldsymbol{\alpha}$ is a phase-shift converter that multiplies the positive portion in the $\boldsymbol{\alpha}$-order
fractional Fourier domain of signal $f$ by $-i$; in other words,  it shifts the phase by $-\pi/2$ while the
negative portion of $\mathcal{F}_{\boldsymbol{\alpha}}f$ is shifted by $\pi/2$. It is also an
amplitude reducer that multiplies the amplitude  in the  $\boldsymbol{\alpha}$-order fractional Fourier
domain of signal $f$ by  ${\tilde{u}_j}/{|\tilde{\boldsymbol{u}}|} $, as shown in Fig. \ref{fig:HT}.

\begin{figure}[H]
	\centering{\footnotesize
		\subfigure[]{
			\begin{tikzpicture}[>=stealth,thick,scale=0.7]			
				\draw [->] (0,-3,0) -- (0,3,0) node [at end, above] {Im $U$};
				\draw [->] (0,0,-3) -- (0,0,3) node [at end, below] {Re $U$};
				\filldraw [blue!20]  (1,0,0) -- (1,0,1.5) -- (2.5,0,1.5)--(2.5,0,0);
				\draw [densely dotted, draw=blue!70!red] (2.5,0,0)--(2.5,0,1.5);
				\draw [densely dotted, draw=blue!70!red]  (1,0,0) -- (1,0,1.5);
				\draw [draw=blue!70!red]  (1,0,1.5) -- (2.5,0,1.5);
				\filldraw [blue!50!green!20] (-1,0,0) -- (-1,0,1.5) -- (-2.5,0,1.5)--(-2.5,0,0);
				\draw [densely dotted, draw=blue!50!green] 	(-2.5,0,0)--(-2.5,0,1.5);
				\draw [densely dotted, draw=blue!50!green]  (-1,0,0) -- (-1,0,1.5);
				\draw [draw=blue!50!green]  (-1,0,1.5) -- (-2.5,0,1.5);
				\draw [->] (-3,0,0) -- (3,0,0) node [at end, right] {$\boldsymbol{u}$};
			\end{tikzpicture}
		}
		\subfigure[]{
			\begin{tikzpicture}[>=stealth,thick,scale=0.7]
				\draw [->] (0,-3,0) -- (0,3,0) node [at end, above] {Im $V$};
				\draw [->] (0,0,-3) -- (0,0,3) node [at end, below] {Re $V$};
				\draw [densely dotted,fill=blue!20,draw=blue!70!red,rotate around x=90]
				(1,0,0) -- (1,0,1) -- (2.5,0,1)--(2.5,0,0);
				\draw[draw=blue!70!red,rotate around x=-90]
				(1,0,-0.3) .. controls (1.3, 0, -0.6) and (1.7,0,-0.8) .. (2.5,0, -0.8);
				\draw [densely dotted,draw=blue!50!green,fill=blue!50!green!20,rotate around x=-90]
				(-1,0,0) -- (-1,0,1) -- (-2.5,0,1)--(-2.5,0,0);
				\draw[draw=blue!50!green,rotate around x=-90]
				(-1,0,0.3) .. controls (-1.3, 0, 0.6) and (-1.7,0,0.8) .. (-2.5,0, 0.8);
				\draw [->] (-3,0,0) -- (3,0,0) node [at end, right] {$\boldsymbol{u}$};
			\end{tikzpicture}
		}\\
		\subfigure[]{
			\begin{tikzpicture}[>=stealth,thick,scale=0.7]
				\draw[->](-3,0)--(3,0) node[right]{$x_1$};
				\draw[->](0,-3)--(0,3)node[above]{$\omega_1$};
				\draw[->,rotate=60,blue](-3,0)--(3,0) node[right]{$u_1$};
				\draw[->,thin,red] (1.5,0) arc (0:60:1.5);
				\draw[->,thin] (0.6,0) arc (0:90:0.6);
				\node[right] at (1.2,0.9)
				{\footnotesize$\textcolor{red}{\mathcal{F}_{\alpha_1}}$};
				\node[right] at (0.4,0.4) {\footnotesize$\mathcal{F}_{\frac{\pi}{2}}$};
			\end{tikzpicture}
		}
		\subfigure[]{
			\begin{tikzpicture}[>=stealth,thick,scale=0.7]
				\draw[->](-3,0)--(3,0) node[right]{$x_2$};
				\draw[->](0,-3)--(0,3)node[above]{$\omega_2$};
				\draw[->,rotate=45,blue](-3,0)--(3,0) node[right]{$u_2$};
				\draw[->,thin,red] (1.5,0) arc (0:45:1.5);
				\draw[->,thin] (0.6,0) arc (0:90:0.6);
				\node[right] at (1.4,0.7)
				{\footnotesize$\textcolor{red}{\mathcal{F}_{\alpha_2}}$};
				\node[right] at (0.4,0.4) {\footnotesize$\mathcal{F}_{\frac{\pi}{2}}$};
			\end{tikzpicture}
		}
	}
	\caption{(a) the original signal: $U=(\mathcal{F}_{\boldsymbol{\alpha}}f)(\boldsymbol{u})$;
		(b) after fractional Riesz transform of order $\boldsymbol{\alpha}$:
		$V=(\mathcal{F}_{\boldsymbol{\alpha}} (R_j^{\alpha }f) (\boldsymbol{u})$;
		(c)-(d) rotations of the time-frequency planes, $\boldsymbol{u}=(u_1,u_2)$,
		$\boldsymbol{x}=(x_1,x_2)$, $\boldsymbol{\alpha}=(\alpha_1,\alpha_2)$.}%
	\label{fig:HT}%
\end{figure}
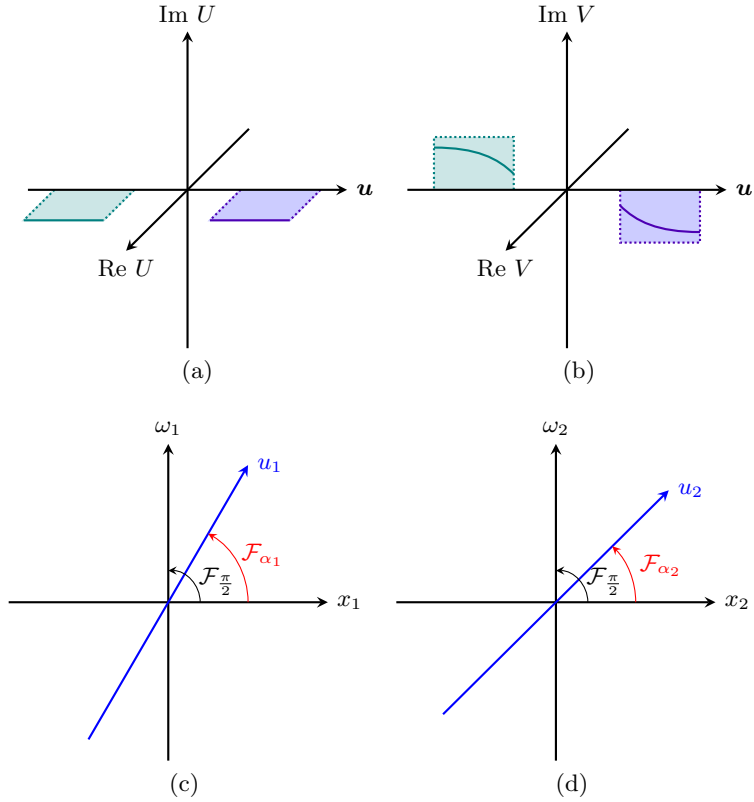

Next, we establish the $L^p(\mathbb{R}^n)$ boundedness of the fractional Riesz transform.
\begin{theorem}\label{2th:6}
For all $1<p<\infty$, there exists  a positive constant $C$ such that
$$\left\|R_j^{\boldsymbol{\alpha}}(f)\right\|_{L^p}\leq C\left\| f \right\|_{L^p},$$
for all $f$ in $  S (\mathbb{R}^n)$.
\end{theorem}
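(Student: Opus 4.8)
The plan is to reduce the $L^p$ estimate for $R_j^{\boldsymbol{\alpha}}$ to the classical $L^p$ boundedness of the ordinary Riesz transform $R_j$, by exploiting that conjugation by a unimodular chirp is an isometry on every $L^p$. First I would note that, since $\alpha_k\notin\pi\mathbb{Z}$, each $a(\alpha_k)=\tfrac12\cot\alpha_k$ is a finite real number, so the chirp $e_{\boldsymbol{\alpha}}(\boldsymbol{x})=e^{i\sum_k a(\alpha_k)x_k^2}$ satisfies $|e_{\boldsymbol{\alpha}}(\boldsymbol{x})|=1$ for all $\boldsymbol{x}$, and $e_{-\boldsymbol{\alpha}}=\overline{e_{\boldsymbol{\alpha}}}$ pointwise (because $a(-\alpha_k)=-a(\alpha_k)$). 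Consequently the multiplication operators $M_{e_{\pm\boldsymbol{\alpha}}}g:=e_{\pm\boldsymbol{\alpha}}g$ are isometric bijections of $L^p(\mathbb{R}^n)$ onto itself for every $1\le p\le\infty$, and they map $S(\mathbb{R}^n)$ into $S(\mathbb{R}^n)$ since the derivatives of $e_{\boldsymbol{\alpha}}$ grow at most polynomially.

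Next I would record the factorization of $R_j^{\boldsymbol{\alpha}}$. Because the outer factor $e_{-\boldsymbol{\alpha}}(\boldsymbol{x})$ in the definition of $R_j^{\boldsymbol{\alpha}}$ does not depend on the integration variable $\boldsymbol{y}$, it pulls out of each truncated integral and hence out of the principal-value limit, and what remains is exactly the classical Riesz transform $R_j$ applied to $e_{\boldsymbol{\alpha}}f$. Thus for every $f\in S(\mathbb{R}^n)$ one has the pointwise identity
$$
R_j^{\boldsymbol{\alpha}}(f)(\boldsymbol{x})=e_{-\boldsymbol{\alpha}}(\boldsymbol{x})\,R_j\big(e_{\boldsymbol{\alpha}}f\big)(\boldsymbol{x})=\big(M_{e_{-\boldsymbol{\alpha}}}\circ R_j\circ M_{e_{\boldsymbol{\alpha}}}\big)(f)(\boldsymbol{x}),
$$
where the inner principal value converges because $e_{\boldsymbol{\alpha}}f\in S(\mathbb{R}^n)$.

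Finally I would conclude: invoking the classical Calderón–Zygmund bound $\|R_j g\|_{L^p}\le C_{n,p}\|g\|_{L^p}$, valid for $1<p<\infty$, together with the two isometries, yields
$$
\big\|R_j^{\boldsymbol{\alpha}}(f)\big\|_{L^p}=\big\|R_j\big(e_{\boldsymbol{\alpha}}f\big)\big\|_{L^p}\le C_{n,p}\big\|e_{\boldsymbol{\alpha}}f\big\|_{L^p}=C_{n,p}\|f\|_{L^p},
$$
which is the desired estimate. I do not expect a genuine obstacle here; the only point requiring a line of care is justifying that the chirp factor $e_{-\boldsymbol{\alpha}}(\boldsymbol{x})$ commutes with the principal-value limit and that $e_{\boldsymbol{\alpha}}f$ is again Schwartz, so that the classical theory applies verbatim. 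An alternative route through Theorem \ref{2th:1} and the FRFT decomposition of Fig.~\ref{fig:com} is possible but less economical, since it would still require mapping properties of $\mathcal{F}_{\boldsymbol{\alpha}}$; the chirp-conjugation argument sidesteps this and also makes transparent why the same bound persists on any rotation-invariant space on which $R_j$ is bounded.
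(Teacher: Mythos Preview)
Your argument is correct and coincides with the paper's own proof: both use the factorization $R_j^{\boldsymbol{\alpha}}=M_{e_{-\boldsymbol{\alpha}}}\circ R_j\circ M_{e_{\boldsymbol{\alpha}}}$, the fact that multiplication by the unimodular chirp is an $L^p$ isometry, and the classical $L^p$ boundedness of $R_j$. Your write-up simply makes explicit the routine justifications (why $e_{\boldsymbol{\alpha}}f\in S(\mathbb{R}^n)$ and why the outer chirp commutes with the principal-value limit) that the paper leaves implicit.
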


\begin{proof}
From the $L^p$ boundedness of the   Riesz transform in \cite{ldy} with Theorem 2.1.4, it follows that
 \begin{align*}
\left\|R_j^{\boldsymbol{\alpha}}(f)\right\|_{L^p} =&\ \left(\int_{\mathbb{R}^n}\Big|c_n e_{-\boldsymbol{\alpha}}(\boldsymbol{x})\int_{\mathbb{R}^n}\frac{y_j}{|\boldsymbol{y}|^{n+1}}f(\boldsymbol{x}-\boldsymbol{y})e_{\boldsymbol{\alpha}}(\boldsymbol{x}-\boldsymbol{y})d\boldsymbol{y}\Big|^pd\boldsymbol{x}\right)^{\frac{1}{p}}\\
=&\ \left\|R_j (fe_{\boldsymbol{\alpha}})\right\|_{L^p}\\
\leq&\  C\|   f \|_{L^p},
\end{align*}
for all $f$ in $  S (\mathbb{R}^n)$.
\end{proof}
According to   Theorem  \ref{2th:1}, we can obtain an identity property of the fractional Riesz transform.
\begin{theorem}
The  fractional Riesz transforms satisty
$$\sum^n_{j=1}\left(R_j^{\boldsymbol{\alpha}}\right)^2=-I, \ \ \ on\  L^2(\mathbb{R}^n),$$
where $I$ is the identity operator. \end{theorem}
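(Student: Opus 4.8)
The plan is to reduce the statement to the classical identity $\sum_{j=1}^{n} R_j^2 = -I$ on $L^2(\mathbb{R}^n)$ by stripping off the chirp factors. First observe that, straight from the definition of $R_j^{\boldsymbol{\alpha}}$ and of the classical $j$th Riesz transform $R_j$, one has for every $f \in S(\mathbb{R}^n)$ the pointwise identity
$$R_j^{\boldsymbol{\alpha}} f = M_{e_{-\boldsymbol{\alpha}}}\, R_j\, M_{e_{\boldsymbol{\alpha}}} f,$$
where $M_g$ denotes multiplication by $g$; indeed this is merely the regrouping of the principal-value integral obtained by pulling $e_{-\boldsymbol{\alpha}}(\boldsymbol{x})$ outside and keeping $e_{\boldsymbol{\alpha}}(\boldsymbol{y})$ inside. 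Since $e_{-\boldsymbol{\alpha}} = \overline{e_{\boldsymbol{\alpha}}}$ and $|e_{\boldsymbol{\alpha}}(\boldsymbol{x})| = 1$ for all $\boldsymbol{x}$, the operators $M_{e_{\boldsymbol{\alpha}}}$ and $M_{e_{-\boldsymbol{\alpha}}}$ are isometries of $L^2(\mathbb{R}^n)$ with $M_{e_{\boldsymbol{\alpha}}} M_{e_{-\boldsymbol{\alpha}}} = M_{e_{-\boldsymbol{\alpha}}} M_{e_{\boldsymbol{\alpha}}} = I$, and $R_j$ is bounded on $L^2(\mathbb{R}^n)$. Hence, invoking Theorem \ref{2th:6} with $p=2$ together with the density of $S(\mathbb{R}^n)$ in $L^2(\mathbb{R}^n)$, the operator $R_j^{\boldsymbol{\alpha}}$ extends to a bounded operator on $L^2(\mathbb{R}^n)$ and the factorization $R_j^{\boldsymbol{\alpha}} = M_{e_{-\boldsymbol{\alpha}}} R_j M_{e_{\boldsymbol{\alpha}}}$ persists on all of $L^2(\mathbb{R}^n)$ by continuity.

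Next I would simply compose. Using $M_{e_{\boldsymbol{\alpha}}} M_{e_{-\boldsymbol{\alpha}}} = I$ on $L^2(\mathbb{R}^n)$ we get, for each $j$,
$$\left(R_j^{\boldsymbol{\alpha}}\right)^2 = M_{e_{-\boldsymbol{\alpha}}} R_j\, \big(M_{e_{\boldsymbol{\alpha}}} M_{e_{-\boldsymbol{\alpha}}}\big)\, R_j\, M_{e_{\boldsymbol{\alpha}}} = M_{e_{-\boldsymbol{\alpha}}}\, R_j^2\, M_{e_{\boldsymbol{\alpha}}}.$$
Summing over $j$ and inserting the classical identity $\sum_{j=1}^{n} R_j^2 = -I$ on $L^2(\mathbb{R}^n)$ (which itself follows from the Fourier-multiplier computation $\sum_{j=1}^{n}\big(-i\xi_j/|\xi|\big)^2 = -1$ and the Plancherel theorem) yields
$$\sum_{j=1}^{n}\left(R_j^{\boldsymbol{\alpha}}\right)^2 = M_{e_{-\boldsymbol{\alpha}}}\left(\sum_{j=1}^{n} R_j^2\right) M_{e_{\boldsymbol{\alpha}}} = -\,M_{e_{-\boldsymbol{\alpha}}} M_{e_{\boldsymbol{\alpha}}} = -I,$$
which is the asserted identity.

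An essentially equivalent alternative is to argue entirely on the FRFT side: extend $\mathcal{F}_{\boldsymbol{\alpha}}$ to a bijective isometry (up to the harmless constant) of $L^2(\mathbb{R}^n)$ via the factorization $\mathcal{F}_{\boldsymbol{\alpha}} f = c(\boldsymbol{\alpha})\, e_{\boldsymbol{\alpha}}\, \mathcal{F}(e_{\boldsymbol{\alpha}} f)(\tilde{\,\cdot\,})$, noting that $|c(\boldsymbol{\alpha})|^2 = \prod_k|\csc\alpha_k|$ precisely cancels the Jacobian of $\boldsymbol{u}\mapsto\tilde{\boldsymbol{u}}$; then Theorem \ref{2th:1} and Lemma \ref{4th:4} give $R_j^{\boldsymbol{\alpha}} = \mathcal{F}_{-\boldsymbol{\alpha}} M_{m_j^{\boldsymbol{\alpha}}} \mathcal{F}_{\boldsymbol{\alpha}}$ with $m_j^{\boldsymbol{\alpha}}(\boldsymbol{u}) = -i\,\tilde{u}_j/|\tilde{\boldsymbol{u}}|$, and since $\mathcal{F}_{\boldsymbol{\alpha}}\mathcal{F}_{-\boldsymbol{\alpha}} = I$ the composition collapses to $\sum_j\big(R_j^{\boldsymbol{\alpha}}\big)^2 = \mathcal{F}_{-\boldsymbol{\alpha}} M_{\sum_j (m_j^{\boldsymbol{\alpha}})^2} \mathcal{F}_{\boldsymbol{\alpha}}$, with $\sum_j\big(m_j^{\boldsymbol{\alpha}}(\boldsymbol{u})\big)^2 = -\sum_j \tilde{u}_j^2/|\tilde{\boldsymbol{u}}|^2 = -1$ for a.e.\ $\boldsymbol{u}$. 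In either approach the single delicate point is the transition from $S(\mathbb{R}^n)$, where Theorem \ref{2th:1} and the pointwise chirp identity are available, to $L^2(\mathbb{R}^n)$, where the squaring is carried out — recall that $R_j^{\boldsymbol{\alpha}} f$ is generally not Schwartz — and I expect this density/extension step to be the only thing requiring care; the rest is algebra with unimodular multipliers.
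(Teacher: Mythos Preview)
Your proof is correct. Your primary argument differs from the paper's: you factor $R_j^{\boldsymbol{\alpha}} = M_{e_{-\boldsymbol{\alpha}}} R_j M_{e_{\boldsymbol{\alpha}}}$ and reduce directly to the classical identity $\sum_j R_j^2 = -I$, whereas the paper applies Theorem~\ref{2th:1} twice on the FRFT side and uses $\sum_j(-i\tilde u_j/|\tilde{\boldsymbol u}|)^2 = -1$. Your route is slightly more elementary in that it never invokes the fractional multiplier theorem at all, only the definition of $R_j^{\boldsymbol{\alpha}}$ and the classical Riesz identity; the paper's route, which is essentially your ``alternative'' paragraph, has the advantage of staying within the FRFT framework developed in the paper. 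You are also more careful than the paper about the passage from $S(\mathbb{R}^n)$ to $L^2(\mathbb{R}^n)$: the paper tacitly applies Theorem~\ref{2th:1} to $R_j^{\boldsymbol{\alpha}} f$, which need not be Schwartz, while you explicitly handle the extension by density and boundedness.
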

\begin{proof}Use the FRFT and the identity $\sum^n_{j=1}\left(-i \tilde{u}_j /|\tilde{\boldsymbol{u}}|\right)^2=-1$ to obtain
\begin{align*}
\mathcal{F}_{\boldsymbol{\alpha}}\left(\sum^n_{j=1}\left(R_j^{\boldsymbol{\alpha}}\right)^2 f\right)(\boldsymbol{u})=&\ \sum^n_{j=1}\left(-i\frac{\tilde{\boldsymbol{u}}_j}{|\tilde{\boldsymbol{u}}|}\right)^2\mathcal{F}_{\boldsymbol{\alpha}} (f)(\boldsymbol{u})\\
=&\ -\mathcal{F}_{\boldsymbol{\alpha}} (f)(\boldsymbol{u}),
\end{align*}
for any $f$ in $L^2(\mathbb{R}^n)$.
\end{proof}

\subsection{The boundedness of    singular integral  operators with chirp functions on rotation-invariant spaces}

\ \ \ \  Just like the Riesz transforms, the
 fractional Riesz transforms are singular integral operators. They are 
 special cases of more general singular integral operators whose kernels $K$ are equipped with   chirp functions 
  $$T_{\boldsymbol{\alpha}}(f)(\boldsymbol{x})={\rm p}.{\rm v}.\int_{\mathbb{R}^n}e_{-\boldsymbol{\alpha}}(\boldsymbol{x})K(\boldsymbol{x},\boldsymbol{y})e_{ \boldsymbol{\alpha}}(\boldsymbol{y})f(\boldsymbol{y})d\boldsymbol{y}={\rm p}.{\rm v}.\int_{\mathbb{R}^n}K^{\boldsymbol{\alpha}}(\boldsymbol{x},\boldsymbol{y})f(\boldsymbol{y})d\boldsymbol{y}.$$ When $\boldsymbol{\alpha}=(\frac{\pi}{2}+k_1\pi,\frac{\pi}{2}+k_2\pi,\dots, \frac{\pi}{2}+k_n\pi), k_j\in\mathbb{Z} $, $T_{\boldsymbol{\alpha}}$ can be regarded as the   classical singular integral operators: $$T(f)(\boldsymbol{x})={\rm p}.{\rm v}.\int_{\mathbb{R}^n}K(\boldsymbol{x},\boldsymbol{y})f(\boldsymbol{y})d\boldsymbol{y}.$$
Then,  we   consider the boundedness of      $T_{\boldsymbol{\alpha}}$ on rotation  invariant Banach spaces.

\begin{definition}
Suppose that $(X,\| \cdot \|_X)$  is a Banach space. We call $X$ a rotation-invariant space   if  $$\|e_{\boldsymbol{\alpha}} f\|_X=\|  f\|_X,$$
for any $f\in X$, where $ \boldsymbol{\alpha} =(\alpha_1, \dots, \alpha_n) \in \mathbb{R}^n$ with $\alpha_k\notin \pi \mathbb{Z}$ for all $ k=1,2,\dots,n$.
\end{definition}

When $K$ satisfies suitable conditions, the boundedness of $T_{\boldsymbol{\alpha}}$ and $T$ in rotation invariant space is equivalent.

\begin{theorem}
If  $X$ is a  rotation invariant space, then $T$ is bounded from $X$ to $X$ if and only if $T_{\boldsymbol{\alpha}} $ is bounded from $X$ to $X$.
\end{theorem}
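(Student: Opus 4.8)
The plan is to reduce everything to the elementary intertwining identity
$$T_{\boldsymbol{\alpha}}(f)(\boldsymbol{x}) = e_{-\boldsymbol{\alpha}}(\boldsymbol{x})\, T\big(e_{\boldsymbol{\alpha}} f\big)(\boldsymbol{x}),$$
which is immediate from the definition of $T_{\boldsymbol{\alpha}}$: since $e_{-\boldsymbol{\alpha}}(\boldsymbol{x})$ does not depend on the integration variable $\boldsymbol{y}$, it factors out of the principal-value integral, leaving $K^{\boldsymbol{\alpha}}(\boldsymbol{x},\boldsymbol{y}) = e_{-\boldsymbol{\alpha}}(\boldsymbol{x})K(\boldsymbol{x},\boldsymbol{y})e_{\boldsymbol{\alpha}}(\boldsymbol{y})$. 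Writing $M_g$ for the operator of multiplication by a function $g$, this says $T_{\boldsymbol{\alpha}} = M_{e_{-\boldsymbol{\alpha}}}\, T\, M_{e_{\boldsymbol{\alpha}}}$, and since $e_{\boldsymbol{\alpha}}\,e_{-\boldsymbol{\alpha}} = 1$ it also says, symmetrically, $T = M_{e_{\boldsymbol{\alpha}}}\, T_{\boldsymbol{\alpha}}\, M_{e_{-\boldsymbol{\alpha}}}$.

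Next I would record that $M_{e_{\boldsymbol{\alpha}}}$ and $M_{e_{-\boldsymbol{\alpha}}}$ are mutually inverse surjective isometries of $X$: the hypothesis of rotation invariance gives $\|e_{\boldsymbol{\alpha}} f\|_X = \|f\|_X$ directly, and applying it with the order $-\boldsymbol{\alpha}$ (whose components also lie outside $\pi\mathbb{Z}$) gives $\|e_{-\boldsymbol{\alpha}} f\|_X = \|f\|_X$. Given this, the two implications are each a one-line estimate. If $\|Tg\|_X \le C\|g\|_X$ for all $g$, then for every $f$,
$$\|T_{\boldsymbol{\alpha}} f\|_X = \big\|M_{e_{-\boldsymbol{\alpha}}}T(e_{\boldsymbol{\alpha}} f)\big\|_X = \big\|T(e_{\boldsymbol{\alpha}} f)\big\|_X \le C\|e_{\boldsymbol{\alpha}} f\|_X = C\|f\|_X;$$
conversely, if $\|T_{\boldsymbol{\alpha}} g\|_X \le C\|g\|_X$, then using $Tf = M_{e_{\boldsymbol{\alpha}}}T_{\boldsymbol{\alpha}}(e_{-\boldsymbol{\alpha}} f)$ one gets $\|Tf\|_X = \|T_{\boldsymbol{\alpha}}(e_{-\boldsymbol{\alpha}} f)\|_X \le C\|e_{-\boldsymbol{\alpha}} f\|_X = C\|f\|_X$. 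In both directions the operator norm is in fact exactly preserved.

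The only genuine point requiring care is the precise meaning of ``$T$ (resp.\ $T_{\boldsymbol{\alpha}}$) bounded on $X$'': one should fix a dense subspace $\mathcal{D}\subset X$ on which the principal-value kernel representations are legitimately defined (for instance $\mathcal{D}=S(\mathbb{R}^n)$, or whatever class is used elsewhere in the paper), check that $M_{e_{\pm\boldsymbol{\alpha}}}$ maps $\mathcal{D}$ onto itself --- true for $S(\mathbb{R}^n)$, since a chirp times a Schwartz function is again Schwartz --- so that the intertwining identity makes sense and holds on $\mathcal{D}$, and then observe that an a priori bound on $\mathcal{D}$ passes to the unique bounded extension on all of $X$ by density. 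This domain/density bookkeeping, together with a careful justification that $e_{-\boldsymbol{\alpha}}(\boldsymbol{x})$ may be taken outside the truncated integrals before passing to the limit $\varepsilon\to 0$, is the main (and essentially only) obstacle; the analytic content of the statement is entirely carried by the trivial conjugation relation and the isometry property of multiplication by a unimodular chirp.
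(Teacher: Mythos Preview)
Your proposal is correct and follows essentially the same approach as the paper: both arguments rest on the conjugation identity $T_{\boldsymbol{\alpha}} = e_{-\boldsymbol{\alpha}}\,T(e_{\boldsymbol{\alpha}}\,\cdot\,)$ together with the isometry property $\|e_{\pm\boldsymbol{\alpha}}f\|_X = \|f\|_X$, yielding the same one-line estimates in each direction. Your write-up is in fact more careful than the paper's, which omits the density and domain considerations you flag.
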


\begin{proof} Let $f\in X$ and $\| T \|_{X\rightarrow X}<\infty$. We have that
\begin{align*}
  \| T_{\boldsymbol{\alpha}}(f) \|_{X} = \| T(e_{\boldsymbol{\alpha}} f) \|_{X}
\leq C\|  e_{\boldsymbol{\alpha}} f  \|_{X}
= C\|   f  \|_{X}.
\end{align*}
 Conversely, for  $\|  T_{\boldsymbol{\alpha}} \|_{X\rightarrow X}<\infty$, we obtain
\begin{align*}
  \| T(  f) \|_{X}= \| e_{-\boldsymbol{\alpha}}T(  f) \|_{X}
= \|T_{\boldsymbol{\alpha}}( e_{ -\boldsymbol{\alpha}} f) \|_{X}
\leq C\|  e_{ -\boldsymbol{\alpha}} f  \|_{X}
= C\|    f  \|_{X}.
\end{align*}
Hence, the theorem follows.
\end{proof}

\section{Chirp Hardy spaces }

\label{sect:invs}

\ \ \ \ In this section, we naturally  consider the boundedness of a singular integral operator with a chirp function on non-rotation invariant space, such as Hardy spaces. Hardy spaces are spaces of distributions  which become more singular as $p$ decreases and can be regarded as a substitute for $L^p$ when $p<1$.

 \subsection{Chirp Hardy spaces and chirp BMO spaces}

\ \ \ \ We  recall the  real variable characterization and atom  characterization of Hardy spaces.

\begin{definition}\emph{(\cite{g1})}
Let $f$ be a bounded tempered distribution on $\mathbb{R}^n$ and let $0<p<\infty$. We say that $f$ lies in the Hardy spaces $H^p (\mathbb{R}^n)$ if the Poisson maximal  function
$$M(f;P)(\boldsymbol{x})=\sup_{t>0}|(P_t \ast f)(\boldsymbol{x})|$$
lies in $L^p(\mathbb{R}^n)$. If this is the case, we set $$\|f\|_{H^p}=\|M(f;P)\|_{L^p}.$$
 \end{definition}

Before introducing the atomic  characterization of Hardy spaces, we   recall the definition of atoms.

 \begin{definition}\emph{(\cite{c,l})}
If $0<p\leq1\leq q\leq\infty, p<q,s\in \mathbb{Z}$ and $s \geq [n(\frac{1}{p}-1)]$, then $(p,q,s)$ satisfying the above conditions are said to be admissible triples, where $[\cdot]$ represents 
the greatest integer function. If the real valued function $a$ satisfies the following conditions:\\

\noindent (1)  $a\in L^q(\mathbb{R}^n)$ and supp$(a)\subset Q$, where $Q$ is a cube centered on $x_0$;\\

\noindent (2)  $\|a\|_{L^q}\leq|Q|^{\frac{1}{q}-\frac{1}{p}}$;\\

\noindent (3)  $ \displaystyle\int_{\mathbb{R}^n}a(\boldsymbol{x}){\boldsymbol{x}}^{\boldsymbol{\alpha}} d\boldsymbol{x}=0$, for any $|\boldsymbol{\alpha}|\leq s$;\\

\noindent
then $a$  is called the $(p,q,s)-$atom centered in $x_0$.
\end{definition}

 \begin{definition}\emph{(\cite{c,l})}
Suppose that $(p,q,s)$ are admissible triples. The atomic Hardy space $H^{p,q,s}_{atom}$ is  
\begin{align*}
H^{p,q,s}_{atom}(\mathbb{R}^n):=\Big\{f\in S '(\mathbb{R}^n): f(\boldsymbol{x})=\sum_j\lambda_ja_j(\boldsymbol{x}),\ {\rm where}\  a_j \ {\rm is}\   (p,q,s){\rm -}{\rm atom}, \\
 \sum_{j=1}^{\infty}|\lambda_j|^p<\infty\Big\},
\end{align*}
and the  norm in this space is defined by 
$$\|f\|_{H^{p,q,s}_{atom}}:=\inf\left(\sum_{j=1}^{\infty}\left|\lambda_j\right|^p\right)^{\frac{1}{p}}.$$
\end{definition}

 \begin{remark}
  Suppose that $f\in  S (\mathbb{R}^n)$. We have
\begin{align*}
\|f\|_{H^p} =&\ \left\|\sup_{t>0}|\left(P_t \ast f\right)|\right\|_{L^p}
 =  \left\|\sup_{t>0}\left|\int_{\mathbb{R}^n}P_t(\cdot-\boldsymbol{y})f(\boldsymbol{y})d\boldsymbol{y}\right|\right\|_{L^p},\\
\|e_{\boldsymbol{\alpha}} f\|_{H^p} =&\ \left\|\sup_{t>0}|(P_t \ast (e_{\boldsymbol{\alpha}} f))|\right\|_{L^p}
 =\left\|\sup_{t>0}\left|\int_{\mathbb{R}^n}P_t(\cdot-\boldsymbol{y})e_{\boldsymbol{\alpha}}(\boldsymbol{y})f(\boldsymbol{y})d\boldsymbol{y}\right|\right\|_{L^p}.
\end{align*}
We can clearly see that $\|e_{\boldsymbol{\alpha}} f\|_{H^p}$ depends on $\boldsymbol{\alpha}$, that is,
$$
\|f\|_{H^p}\neq \|e_{\boldsymbol{\alpha}} f\|_{H^p}.
$$ 
Note that $H^p (\mathbb{R}^n)$ is not a  rotation-invariant space.
 \end{remark}

Now let us consider the boundedness of singular integral operators with chirp functions in Hardy space when   kernel $K$ satisfies certain size and smoothness conditions. Let us  recall the definition of the $\delta$-Calder\'{o}n-Zygmund operator.

\begin{definition}\label{dfT}\emph{(\cite{l1})}
Let $T$ be a bounded linear operator.  We say that $T$ is a $\delta$-Calder\'{o}n-Zygmund operator if  $T$ is bounded on $L^2(\mathbb{R}^n)$
and  $K$ is a continuous function on  $\mathbb{R}^n\times \mathbb{R}^n\setminus\{(\boldsymbol{x},\boldsymbol{y}):\boldsymbol{x}\neq \boldsymbol{y}\}$ that satisfies \\

\noindent (1) $|K(\boldsymbol{x},\boldsymbol{y})|\leq\frac{C}{|\boldsymbol{x}-\boldsymbol{y}|^n},\ \boldsymbol{x}\neq \boldsymbol{y};$ \\

\noindent (2) $|K(\boldsymbol{x},\boldsymbol{y})-K(\boldsymbol{x},\boldsymbol{z})|+|K(\boldsymbol{y},\boldsymbol{x})-K(\boldsymbol{z},\boldsymbol{x})|\leq C\frac{|\boldsymbol{y}-\boldsymbol{z}|^\delta}{|\boldsymbol{x}-\boldsymbol{z}|^{n+\delta}},\ \ \ {\rm if}\  |\boldsymbol{x}-\boldsymbol{z}|>2|\boldsymbol{y}-\boldsymbol{z}|,\ 0<\delta\leq1 ; $\\

\noindent (3) For $f,g\in S(R^n) $  and supp $f$ $\cap$ supp $g=\varnothing$, one has
$$(Tf,g)=\int K(\boldsymbol{x},\boldsymbol{y})f(\boldsymbol{y})g(\boldsymbol{x})d\boldsymbol{y}d\boldsymbol{x}.$$

 \end{definition}

 \begin{lemma}\emph{(\cite{l1})}
Suppose that $T$ is a $\delta$-Calder\'{o}n-Zygmund operator and its conjugate operator $T^\ast=0$. Then, $T$ can be extended to the  bounded operator from $ H^p(\mathbb{R}^n)  $ to $ H^p (\mathbb{R}^n) $, where $0<\delta\leq1$ and $\frac{n}{n+\delta}<p\leq1$.
\end{lemma}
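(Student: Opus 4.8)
The plan is to prove this through the atomic (and molecular) theory of Hardy spaces. Since $0<\delta\le 1$ and $\tfrac{n}{n+\delta}<p\le 1$ we have $0\le n\big(\tfrac1p-1\big)<\delta\le 1$, hence $s=\big[n(\tfrac1p-1)\big]=0$, so it suffices to work with $(p,2,0)$-atoms: functions $a$ with $\mathrm{supp}\,a\subset Q=Q(\boldsymbol x_0,r)$, $\|a\|_{L^2}\le|Q|^{1/2-1/p}$ and $\int_{\mathbb R^n}a=0$. By a standard density argument ($H^p\cap L^2$ is dense in $H^p$, and for $p\le1$ the functional $\|\cdot\|_{H^p}^p$ is subadditive, so a uniform bound on atoms propagates to finite sums and then to all of $H^p$), the lemma reduces to the uniform estimate $\|Ta\|_{H^p}\le C$ over all such atoms. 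The role of the hypothesis $T^\ast=0$ (understood as $T^\ast 1=0$) is to supply the cancellation $\int_{\mathbb R^n}(Ta)(\boldsymbol x)\,d\boldsymbol x=\langle Ta,1\rangle=\langle a,T^\ast 1\rangle=0$, which is exactly the vanishing moment that $Ta$ must have to lie in $H^p$ when $s=0$.

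Next I would localize around $Q$. With $Q^\ast=2\sqrt n\,Q$, write
\[
\|Ta\|_{H^p}^p=\int_{Q^\ast}M(Ta;P)(\boldsymbol x)^p\,d\boldsymbol x+\int_{(Q^\ast)^{c}}M(Ta;P)(\boldsymbol x)^p\,d\boldsymbol x=:\mathrm I+\mathrm{II}.
\]
For $\mathrm I$, the Poisson maximal function $M(\,\cdot\,;P)$ is dominated by the Hardy--Littlewood maximal operator, which is bounded on $L^2$; Hölder's inequality with exponent $2/p>1$ then gives $\mathrm I\le|Q^\ast|^{1-p/2}\|M(Ta;P)\|_{L^2}^p\le C|Q|^{1-p/2}\|Ta\|_{L^2}^p\le C|Q|^{1-p/2}\|a\|_{L^2}^p\le C|Q|^{1-p/2}|Q|^{p(1/2-1/p)}=C$, using the $L^2$-boundedness of $T$ (built into Definition \ref{dfT}). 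For $\mathrm{II}$, if $\boldsymbol x\notin Q^\ast$ and $\boldsymbol y\in Q$ then $|\boldsymbol x-\boldsymbol x_0|>2|\boldsymbol y-\boldsymbol x_0|$ and $|\boldsymbol x-\boldsymbol y|\approx|\boldsymbol x-\boldsymbol x_0|$, so subtracting off $K(\boldsymbol x,\boldsymbol x_0)$ (legitimate since $\int a=0$) and invoking the kernel regularity in the second variable (condition (2) of Definition \ref{dfT}),
\[
|(Ta)(\boldsymbol x)|=\Big|\int_Q\big(K(\boldsymbol x,\boldsymbol y)-K(\boldsymbol x,\boldsymbol x_0)\big)a(\boldsymbol y)\,d\boldsymbol y\Big|\le C\,\frac{r^{\delta}}{|\boldsymbol x-\boldsymbol x_0|^{n+\delta}}\,\|a\|_{L^1}\le C\,\frac{r^{\,n+\delta-n/p}}{|\boldsymbol x-\boldsymbol x_0|^{n+\delta}},
\]
using $\|a\|_{L^1}\le|Q|^{1-1/p}\approx r^{\,n-n/p}$. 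Raising this to the power $p$ and integrating over $|\boldsymbol x-\boldsymbol x_0|\gtrsim r$ yields a finite constant \emph{precisely} because $p(n+\delta)>n$, i.e.\ because $p>\tfrac{n}{n+\delta}$; this is where the restriction on $p$ enters.

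The genuine difficulty — and what I expect to be the main obstacle — is that in $\mathrm{II}$ one must bound the full Poisson maximal function $M(Ta;P)(\boldsymbol x)=\sup_{t>0}|(P_t\ast Ta)(\boldsymbol x)|$, not merely $|(Ta)(\boldsymbol x)|$. The cleanest resolution is to notice that the three facts just isolated — $Ta\in L^2$ with $\|Ta\|_{L^2}\le C\,r^{-n(1/p-1/2)}$, the vanishing moment $\int Ta=0$, and the off-diagonal decay $|(Ta)(\boldsymbol x)|\le C\,r^{\,n+\delta-n/p}|\boldsymbol x-\boldsymbol x_0|^{-n-\delta}$ — say exactly that $Ta$ is, up to a fixed multiple, an $H^p$-\emph{molecule} centered at $\boldsymbol x_0$ in the admissible range $\tfrac{n}{n+\delta}<p\le1$; the molecular characterization of $H^p$ then delivers $\|Ta\|_{H^p}\le C$ uniformly in $a$. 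Alternatively one argues by hand: for $\boldsymbol x\notin Q^\ast$ split $(P_t\ast Ta)(\boldsymbol x)$ over a larger dilate $Q^{\ast\ast}$ and its complement, bounding the near part by the $L^2$-estimate on $Ta$ together with the decay of the Poisson kernel off the region of integration, and the far part by the tail bound on $Ta$; taking the supremum over $t>0$ reproduces $M(Ta;P)(\boldsymbol x)\le C\,r^{\,n+\delta-n/p}|\boldsymbol x-\boldsymbol x_0|^{-n-\delta}$, whence $\mathrm{II}\le C$ as above. Combining $\mathrm I$ and $\mathrm{II}$ gives $\|Ta\|_{H^p}\le C$ for every atom, and the density/extension step completes the proof.
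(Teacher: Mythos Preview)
The paper does not actually prove this lemma; it is quoted as a known result from Lu's monograph \cite{l1} and no argument is supplied. Your outline is essentially the standard proof one finds there (and in the Taibleson--Weiss molecular theory \cite{tw} that underlies it): reduce to $(p,2,0)$-atoms, use $T^\ast 1=0$ to secure the vanishing moment of $Ta$, establish the local $L^2$ control and the off-support decay $|(Ta)(\boldsymbol x)|\lesssim r^{\,n+\delta-n/p}|\boldsymbol x-\boldsymbol x_0|^{-n-\delta}$ from the kernel regularity, and then recognize $Ta$ as (a fixed multiple of) an $H^p$-molecule. So your approach is correct and coincides with the classical one.

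One small caveat worth making explicit: the step ``uniform bound on atoms $\Rightarrow$ bounded on all of $H^p$'' is \emph{not} automatic for arbitrary operators (there are well-known counterexamples). It is valid here because $T$ is linear and bounded on $L^2$: for $f\in H^p\cap L^2$ the atomic decomposition $f=\sum_j\lambda_j a_j$ can be taken to converge in $L^2$, so $Tf=\sum_j\lambda_j Ta_j$ in $L^2$ and hence in $\mathcal S'$, after which the subadditivity of $\|\cdot\|_{H^p}^p$ and the uniform molecular bound on $Ta_j$ give $\|Tf\|_{H^p}^p\le C\sum_j|\lambda_j|^p$. You alluded to this but did not isolate the reason it works; since it is exactly where the $L^2$-boundedness built into Definition~\ref{dfT} is used a second time, it is worth one sentence.
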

By standard calculations, we have the following estimates for   $K^{\boldsymbol{\alpha}}$:
 \begin{align*}
 |K^{\boldsymbol{\alpha}}(\boldsymbol{x},\boldsymbol{y})-K^{\boldsymbol{\alpha}}(\boldsymbol{x},\boldsymbol{z})|\leq&\  |K (\boldsymbol{x},\boldsymbol{y})-K (\boldsymbol{x},\boldsymbol{z})|+L_{\boldsymbol{\alpha}}{(\boldsymbol{y},\boldsymbol{z})}|K (\boldsymbol{x},\boldsymbol{z})||\boldsymbol{y}-\boldsymbol{z}|,
 \end{align*}
where 
$$
L_{\boldsymbol{\alpha}}{(\boldsymbol{y},\boldsymbol{z})}=\left|\triangledown e_{{\boldsymbol{\alpha}}}({\boldsymbol{w}})\right|=\sqrt{\sum^n_{k=1}|e_{\boldsymbol{\alpha}}({\boldsymbol{w}}) \cot\alpha_k w_k|^2} ,
$$
and $\boldsymbol{w}=\boldsymbol{z}+(\theta_1(y_1-z_1),\dots,\theta_n(y_n-z_n)))$ for $\theta_j\in(0,1)$.
 It is known that for  $\boldsymbol{\alpha}=(\frac{\pi}{2}+k_1\pi,\frac{\pi}{2}+k_2\pi,\dots, \frac{\pi}{2}+k_n\pi)$, $K^{\boldsymbol{\alpha}}$ satisfies the $\delta$-Calder\'{o}n-Zygmund operator   kernel condition $(2)$.
 It is obvious that $K^{\boldsymbol{\alpha}}$ satisfies $(1)$ in Definition \ref{dfT}, but $(2)$  in Definition \ref{dfT} is not guaranteed.

We now define a new class of Hardy space with chirp functions.
 \begin{definition}
Let $f$ be a bounded tempered distribution on $\mathbb{R}^n$ and let $0<p<\infty$. We say that $f$ lies in the chirp Hardy space $H_{\boldsymbol{\alpha}}^p (\mathbb{R}^n)$ for $ \boldsymbol{\alpha } 
=(\alpha_1, \dots , \alpha_n) \in \mathbb{R}^n$  with $\alpha_k\notin \pi \mathbb{Z}$ for all  $k=1,2,\dots,n$, if the  Poisson maximal  function with chirp function
$$M_{\boldsymbol{\alpha}}(f;P) =\sup_{t>0}|(P_t \ast (e_{\boldsymbol{\alpha}} f)) |$$
lies in $L^p(\mathbb{R}^n)$. If this is the case, we set $$\|f\|_{H^p_{\boldsymbol{\alpha}}}=\|M_{\boldsymbol{\alpha}}(f;P)\|_{L^p}.$$
\end{definition}

 \begin{lemma}\label{3th:1}\emph{(\cite{g2})}
The Hardy space $H^p (\mathbb{R}^n)$ is a complete space.
\end{lemma}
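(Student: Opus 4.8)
The plan is to use the standard criterion that a quasi-normed space is complete if and only if every absolutely convergent series in it converges. First I would record the (quasi-)triangle structure of $\|\cdot\|_{H^p}$: since $M(f+g;P)\le M(f;P)+M(g;P)$ pointwise and the functional $h\mapsto\|h\|_{L^p}^{q}$ is subadditive on nonnegative functions with $q:=\min(1,p)$, one obtains $\bigl\|\sum_{j}f_j\bigr\|_{H^p}^{q}\le\sum_{j}\|f_j\|_{H^p}^{q}$ for every finite sum. In particular $\|\cdot\|_{H^p}$ is a norm for $p\ge1$ and $d(f,g)=\|f-g\|_{H^p}^{q}$ is a metric for $p<1$. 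Thus it suffices to fix a sequence $(f_j)\subset H^p$ with $\sum_j\|f_j\|_{H^p}^{q}<\infty$ and produce $f\in H^p$ with $\bigl\|f-\sum_{j\le N}f_j\bigr\|_{H^p}\to0$.

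Next I would build $f$ from the Poisson extensions. Put $G:=\sum_j M(f_j;P)$; the inequality above gives $\|G\|_{L^p}^{q}\le\sum_j\|f_j\|_{H^p}^{q}<\infty$, so $G\in L^p(\mathbb{R}^n)$ and $G(x)<\infty$ for a.e. $x$. Hence for a.e. $x$ and every $t>0$ the series $\sum_j(P_t*f_j)(x)$ converges absolutely; writing $u(x,t)$ for its sum, $\sup_{t>0}|u(x,t)|\le G(x)$, and each partial sum $S_N:=\sum_{j\le N}f_j$ is a bounded tempered distribution with $(P_t*S_N)(x)\to u(x,t)$ pointwise. To realize $u$ as the Poisson integral of a distribution I would note that $(S_N)$ is Cauchy in $H^p$ (apply the displayed inequality to $S_M-S_N$) and invoke two standard facts: the continuous embedding $H^p(\mathbb{R}^n)\hookrightarrow S'(\mathbb{R}^n)$, and $P_t*g\to g$ in $S'(\mathbb{R}^n)$ for every bounded tempered distribution $g$. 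These furnish a tempered distribution $f$ with $S_N\to f$ in $S'(\mathbb{R}^n)$; since $|P_t*S_N|\le G$ for all $N$, $f$ is a bounded tempered distribution with $P_t*f=u(\cdot,t)$, so $M(f;P)=\sup_{t>0}|u(\cdot,t)|\le G\in L^p$, i.e. $f\in H^p$.

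It remains to control the tail: $f-S_N=\sum_{j>N}f_j$ in $S'(\mathbb{R}^n)$, so $P_t*(f-S_N)=\sum_{j>N}(P_t*f_j)$ and hence $M(f-S_N;P)(x)\le\sum_{j>N}M(f_j;P)(x)$; taking $L^p$ quasi-norms and using subadditivity once more, $\|f-S_N\|_{H^p}^{q}\le\sum_{j>N}\|f_j\|_{H^p}^{q}\to0$. Therefore $\sum_jf_j=f$ in $H^p$, and completeness follows. For $p\ge1$ the whole construction collapses to an elementary dominated-convergence computation, since $|\langle P_t*g,\phi\rangle|\le\int M(g;P)|\phi|\le\|g\|_{H^p}\|\phi\|_{L^{p'}}$ makes both the $S'$-embedding and the identification of $f$ immediate.

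The hard part is precisely the $p<1$ instance of the embedding $H^p\hookrightarrow S'$ — equivalently, the assertion that a function harmonic on $\mathbb{R}^{n+1}_{+}$ whose vertical maximal function lies in $L^p$ is the Poisson extension of an $H^p$ distribution. For such $p$, Hölder's inequality against a test function is unavailable, and one must pass through the grand (or nontangential) maximal-function characterization of $H^p$, for instance via the pointwise bound $|(P_t*g)(x)|\le C\,t^{-n/p}\|g\|_{H^p}$, which holds because $|(P_t*g)(x)|>\lambda$ forces the ball $B(x,t)$ into the superlevel set $\{M^{*}(g;P)>\lambda\}$, of measure at most $\lambda^{-p}\|g\|_{H^p}^{p}$. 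All of this is part of the real-variable $H^p$ theory developed in \cite{g1,g2}; everything else is the routine "absolutely convergent series converge" mechanism.
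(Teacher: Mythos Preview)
The paper does not prove this lemma at all: it is simply quoted as a known fact from \cite{g2} and used as a black box in the proof of Theorem~\ref{3th:2}. So there is no ``paper's own proof'' to compare against; what you have written is a self-contained reconstruction of a completeness argument.

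Your outline is essentially the standard one found in \cite{g2}: reduce completeness to the convergence of absolutely $q$-summable series with $q=\min(1,p)$, dominate the maximal functions by $G=\sum_j M(f_j;P)\in L^p$, identify the limit in $S'(\mathbb{R}^n)$ via the continuous embedding $H^p\hookrightarrow S'$, and control the tail by subadditivity. You have also correctly flagged the only nontrivial ingredient, namely the $H^p\hookrightarrow S'$ embedding for $p<1$, and sketched the usual pointwise bound $|(P_t*g)(x)|\le C\,t^{-n/p}\|g\|_{H^p}$ that drives it.

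One technical point that deserves a sentence of care: from $S_N\to f$ in $S'$ you want $P_t*S_N\to P_t*f$ pointwise in order to identify $P_t*f$ with $u(\cdot,t)$. Since the Poisson kernel $P_t$ is \emph{not} a Schwartz function, this does not follow immediately from $S'$-convergence. The clean way around it (and the route actually taken in \cite{g2}) is to run the whole argument with a fixed $\Phi\in S(\mathbb{R}^n)$ with $\int\Phi=1$ in place of $P$, using the smooth (or grand) maximal function $M_\Phi$; then $\Phi_t\in S$ and the passage to the limit is immediate. The equivalence of the Poisson and smooth maximal characterizations is precisely the real-variable $H^p$ theory you invoke at the end, so nothing is circular, but as written your chain ``$S_N\to f$ in $S'$, $|P_t*S_N|\le G$, hence $P_t*f=u(\cdot,t)$'' hides this step.
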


 \begin{theorem}\label{3th:2}
The chirp Hardy space $H_{\boldsymbol{\alpha}}^p (\mathbb{R}^n)$ is a complete space.
\end{theorem}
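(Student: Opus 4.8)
The plan is to reduce completeness of $H_{\boldsymbol{\alpha}}^p(\mathbb{R}^n)$ to that of the ordinary Hardy space $H^p(\mathbb{R}^n)$, which is available as Lemma \ref{3th:1}. The key observation is the defining identity $\|f\|_{H^p_{\boldsymbol{\alpha}}} = \|M_{\boldsymbol{\alpha}}(f;P)\|_{L^p} = \|M(e_{\boldsymbol{\alpha}} f; P)\|_{L^p} = \|e_{\boldsymbol{\alpha}} f\|_{H^p}$; that is, the multiplication map $f \mapsto e_{\boldsymbol{\alpha}} f$ is an isometry from $H^p_{\boldsymbol{\alpha}}(\mathbb{R}^n)$ onto $H^p(\mathbb{R}^n)$. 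Since the chirp $e_{\boldsymbol{\alpha}}$ is a smooth function with $|e_{\boldsymbol{\alpha}}(\boldsymbol{x})| = 1$ everywhere, multiplication by it is a bijection on bounded tempered distributions with inverse $g \mapsto e_{-\boldsymbol{\alpha}} g$, so this map is genuinely onto and invertible. Completeness is preserved under an isometric isomorphism, so the result follows.

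Concretely, I would proceed as follows. First I would verify the isometry claim by unwinding the definitions of $M_{\boldsymbol{\alpha}}$ and the $H^p_{\boldsymbol{\alpha}}$ norm, exactly as in the Remark preceding the theorem, to get $\|f\|_{H^p_{\boldsymbol{\alpha}}} = \|e_{\boldsymbol{\alpha}} f\|_{H^p}$. Next, take a Cauchy sequence $\{f_k\}$ in $H^p_{\boldsymbol{\alpha}}(\mathbb{R}^n)$. Then $\{e_{\boldsymbol{\alpha}} f_k\}$ is Cauchy in $H^p(\mathbb{R}^n)$ by the isometry, hence converges in $H^p(\mathbb{R}^n)$ to some $g$ by Lemma \ref{3th:1}. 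Set $f := e_{-\boldsymbol{\alpha}} g$, which is a well-defined bounded tempered distribution. Then
$$\|f_k - f\|_{H^p_{\boldsymbol{\alpha}}} = \|e_{\boldsymbol{\alpha}}(f_k - f)\|_{H^p} = \|e_{\boldsymbol{\alpha}} f_k - g\|_{H^p} \longrightarrow 0,$$
so $f_k \to f$ in $H^p_{\boldsymbol{\alpha}}(\mathbb{R}^n)$, and $f$ lies in $H^p_{\boldsymbol{\alpha}}(\mathbb{R}^n)$ since $e_{\boldsymbol{\alpha}} f = g \in H^p(\mathbb{R}^n)$. This establishes completeness.

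The only point requiring a little care — and the closest thing to an obstacle — is making sure that multiplication by $e_{\boldsymbol{\alpha}}$ really does map the ambient space of "bounded tempered distributions" (on which $H^p$ and $H^p_{\boldsymbol{\alpha}}$ live) into itself bijectively, so that $f = e_{-\boldsymbol{\alpha}} g$ is a legitimate object in the right class and the quasi-norm is actually a quasi-norm (nondegeneracy: $\|f\|_{H^p_{\boldsymbol{\alpha}}} = 0 \iff \|e_{\boldsymbol{\alpha}} f\|_{H^p} = 0 \iff e_{\boldsymbol{\alpha}} f = 0 \iff f = 0$). Since $e_{\boldsymbol{\alpha}} \in C^\infty$ with all derivatives of polynomial growth and $|e_{\boldsymbol{\alpha}}| \equiv 1$, multiplication by $e_{\boldsymbol{\alpha}}$ preserves $\mathcal{S}'$, preserves boundedness of distributions, and is its own two-sided inverse up to replacing $\boldsymbol{\alpha}$ by $-\boldsymbol{\alpha}$; I would state this briefly rather than belabor it. Everything else is a formal transport of structure through the isometry.
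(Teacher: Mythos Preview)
Your proof is correct and follows essentially the same route as the paper: transport a Cauchy sequence through the isometry $f\mapsto e_{\boldsymbol{\alpha}}f$ into $H^p(\mathbb{R}^n)$, invoke Lemma~\ref{3th:1} to get a limit $\bar f$, and pull back via $f:=e_{-\boldsymbol{\alpha}}\bar f$. Your added remarks on why multiplication by $e_{\boldsymbol{\alpha}}$ is a bijection on bounded tempered distributions are a welcome clarification but do not alter the argument.
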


\begin{proof}
Let $\{f_k\}$ be  a Cauchy sequence in $H_{\boldsymbol{\alpha}}^p (\mathbb{R}^n)$. Then, $\{e_{\boldsymbol{\alpha}}{f_k}\}$ is a Cauchy sequence in $H^p (\mathbb{R}^n)$.
 By Lemma \ref{3th:1},   there exists an $\bar{f}\in H^p (\mathbb{R}^n)$  such that
$$\lim_{k\rightarrow\infty}\|e_{\boldsymbol{\alpha}}{f_k}-\bar{f}\|_{H^p}=0.$$
The above  identity is  rewritten as
$$\lim_{k\rightarrow\infty}\|{f_k}-e_{-\boldsymbol{\alpha}}\bar{f}\|_{H_{\boldsymbol{\alpha}}^p}=0.$$
Since $\bar{f}\in H^p (\mathbb{R}^n)$, we obtain $f:=e_{-\boldsymbol{\alpha}}\bar{f}\in H_{\boldsymbol{\alpha}}^p (\mathbb{R}^n)$,  which completes the proof of the theorem.
\end{proof}

It is  known that the dual space of $H^1$ is the $BMO$ space. To study the dual space of the chirp Hardy space,   we define a new $BMO$ space with a chirp function as follows.
 \begin{definition}
Suppose that  $f$ is a   locally integrable function on $\mathbb{R}^n$. Define the chirp $BMO$ space as $$BMO^{\boldsymbol{\alpha}}(\mathbb{R}^n)=\{f:\|f\|_{{BMO}^{\boldsymbol{\alpha}}}<\infty\}.$$
Let
$$\|f\|_{{BMO}^{\boldsymbol{\alpha}}}=\sup_{Q}\frac{1}{|Q|}\int_{Q}|e_{\boldsymbol{\alpha}}(\boldsymbol{x})f(\boldsymbol{x})-{\rm Avg}_{Q}(e_{\boldsymbol{\alpha} }f)|d\boldsymbol{x},$$
where the supremum is taken over all cubes $Q$ in $\mathbb{R}^n$ and $ \boldsymbol{\alpha} 
=(\alpha_1, \dots , \alpha_n) \in \mathbb{R}^n$ with $\alpha_k\notin \pi \mathbb{Z}, \ k=1,2,\dots,n$.

\end{definition}

 \begin{lemma}\label{3th:9}\emph{(\cite{d,g2})}
 $BMO$    is a complete space.
\end{lemma}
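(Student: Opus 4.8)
The plan is to prove that $BMO(\mathbb{R}^n)$ is complete (Cauchy sequences in the $BMO$ seminorm converge, the limit being determined modulo constants) by first producing a candidate limit in $L^1_{\mathrm{loc}}(\mathbb{R}^n)$ and then upgrading that convergence to convergence in the $BMO$ seminorm.

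First I would fix the unit cube $Q_0$ centered at the origin and, using that $\|\cdot\|_{BMO}$ annihilates additive constants, replace a given Cauchy sequence $\{f_k\}$ by $f_k - \mathrm{Avg}_{Q_0} f_k$, so that $\mathrm{Avg}_{Q_0} f_k = 0$ for every $k$. The key estimate is a telescoping/chaining bound: for any cube $Q$ one can find a finite chain $Q_0 = R_0, R_1, \dots, R_m = Q$ of cubes, consecutive ones being nested with comparable measures, so that $|\mathrm{Avg}_{R_i} g - \mathrm{Avg}_{R_{i+1}} g| \le C\|g\|_{BMO}$ for all $i$; summing gives $|\mathrm{Avg}_Q g| \le C(Q)\|g\|_{BMO}$, and combining with the defining inequality $\frac{1}{|Q|}\int_Q |g - \mathrm{Avg}_Q g| \le \|g\|_{BMO}$ yields $\frac{1}{|Q|}\int_Q |g| \le C(Q)\|g\|_{BMO}$. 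Applying this with $g = f_j - f_k$ shows $\{f_k\}$ is Cauchy in $L^1(Q)$ for every cube $Q$. By completeness of $L^1$ on each cube together with a diagonal argument over an increasing exhaustion of $\mathbb{R}^n$ by cubes, there is $f \in L^1_{\mathrm{loc}}(\mathbb{R}^n)$ with $f_k \to f$ in $L^1(Q)$ for every cube $Q$.

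Next I would check $f \in BMO$: a Cauchy sequence is bounded, so $M := \sup_k \|f_k\|_{BMO} < \infty$, and passing to the $L^1(Q)$ limit inside $\frac{1}{|Q|}\int_Q |f_k - \mathrm{Avg}_Q f_k|$ gives $\frac{1}{|Q|}\int_Q |f - \mathrm{Avg}_Q f| \le M$ for all $Q$. Finally, for norm convergence: given $\varepsilon > 0$ pick $N$ with $\|f_j - f_k\|_{BMO} < \varepsilon$ for $j,k \ge N$; for any cube $Q$ and $k \ge N$ the expression $\frac{1}{|Q|}\int_Q |(f_j - f_k) - \mathrm{Avg}_Q(f_j - f_k)|$ is $< \varepsilon$ for all $j \ge N$, and letting $j \to \infty$ (using $L^1(Q)$ convergence of $f_j$ to $f$) gives $\frac{1}{|Q|}\int_Q |(f - f_k) - \mathrm{Avg}_Q(f - f_k)| \le \varepsilon$; taking the supremum over $Q$ yields $\|f - f_k\|_{BMO} \le \varepsilon$.

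The main obstacle is the chaining estimate of the first step, i.e.\ controlling $|\mathrm{Avg}_Q g|$ for large or distant cubes $Q$ purely in terms of $\|g\|_{BMO}$; this is the only place where the geometry of $\mathbb{R}^n$, and the normalization that removes the additive-constant ambiguity, genuinely enter, while the rest is soft functional analysis. I would also note an alternative, shorter route: since $BMO(\mathbb{R}^n)$ is the dual of the Hardy space $H^1(\mathbb{R}^n)$ and the dual of any normed space is complete, completeness follows immediately — but the direct argument above has the advantage of being self-contained and of transferring verbatim to weighted or chirp-twisted variants.
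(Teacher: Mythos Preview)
Your argument is correct and is essentially the standard direct proof of the completeness of $BMO$ (normalize to kill the constant, use a chaining estimate to pass from the $BMO$ seminorm to local $L^1$ control, extract an $L^1_{\mathrm{loc}}$ limit, and then pass to the limit inside the oscillation integrals). There is nothing to compare it to in the paper, however: the paper does not prove this lemma at all, but simply cites it from \cite{d,g2} as a known result, and then invokes it as a black box to deduce the completeness of the chirp space $BMO^{\boldsymbol{\alpha}}$ by the trivial transference $f \mapsto e_{\boldsymbol{\alpha}} f$ (exactly parallel to the proof of Theorem~\ref{3th:2}). Your closing remark about the duality shortcut is also apt, and indeed the paper establishes $(H^1)^* = BMO$ as Lemma~\ref{3th:5} independently, so that route is available here as well.
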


 \begin{theorem}
 $BMO^{\boldsymbol{\alpha}}$ is   complete.
\end{theorem}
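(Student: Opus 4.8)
The plan is to transfer completeness of $BMO^{\boldsymbol{\alpha}}$ from completeness of the classical $BMO$ space (Lemma \ref{3th:9}) by exploiting the very definition of the chirp $BMO$ norm, which was designed precisely so that the multiplication operator $f\mapsto e_{\boldsymbol{\alpha}}f$ is an isometry onto the ordinary $BMO$ space. Concretely, I would first observe that, directly from the definitions,
$$
\|f\|_{BMO^{\boldsymbol{\alpha}}}=\sup_{Q}\frac{1}{|Q|}\int_{Q}\bigl|e_{\boldsymbol{\alpha}}(\boldsymbol{x})f(\boldsymbol{x})-\mathrm{Avg}_Q(e_{\boldsymbol{\alpha}}f)\bigr|\,d\boldsymbol{x}=\|e_{\boldsymbol{\alpha}}f\|_{BMO},
$$
so that $f\in BMO^{\boldsymbol{\alpha}}$ if and only if $e_{\boldsymbol{\alpha}}f\in BMO$, and the map $U:f\mapsto e_{\boldsymbol{\alpha}}f$ is a (linear) norm-preserving bijection $BMO^{\boldsymbol{\alpha}}\to BMO$ with inverse $g\mapsto e_{-\boldsymbol{\alpha}}g$ (note $e_{-\boldsymbol{\alpha}}(\boldsymbol{x})e_{\boldsymbol{\alpha}}(\boldsymbol{x})=1$ pointwise since $a(-\alpha_k)=-a(\alpha_k)$). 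This is entirely analogous to the argument already used for $H^p_{\boldsymbol{\alpha}}$ in Theorem \ref{3th:2}.

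Given this isometry, completeness is immediate: let $\{f_k\}$ be a Cauchy sequence in $BMO^{\boldsymbol{\alpha}}$. Then $\{e_{\boldsymbol{\alpha}}f_k\}$ is Cauchy in $BMO$ since $\|e_{\boldsymbol{\alpha}}f_k-e_{\boldsymbol{\alpha}}f_m\|_{BMO}=\|f_k-f_m\|_{BMO^{\boldsymbol{\alpha}}}$. By Lemma \ref{3th:9} there exists $\bar f\in BMO$ with $\|e_{\boldsymbol{\alpha}}f_k-\bar f\|_{BMO}\to 0$. Setting $f:=e_{-\boldsymbol{\alpha}}\bar f$, rewriting $\|e_{\boldsymbol{\alpha}}f_k-\bar f\|_{BMO}=\|e_{\boldsymbol{\alpha}}(f_k-f)\|_{BMO}=\|f_k-f\|_{BMO^{\boldsymbol{\alpha}}}$ gives $\|f_k-f\|_{BMO^{\boldsymbol{\alpha}}}\to 0$, and $f\in BMO^{\boldsymbol{\alpha}}$ because $e_{\boldsymbol{\alpha}}f=\bar f\in BMO$. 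Hence every Cauchy sequence converges and $BMO^{\boldsymbol{\alpha}}$ is complete.

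The only genuine subtlety — the place I would be slightly careful — is the usual $BMO$ caveat that $\|\cdot\|_{BMO}$ is only a seminorm (it vanishes on constants), so "completeness" here must be understood in the quotient sense $BMO/\{\text{constants}\}$, or with an auxiliary normalization; one should check that $U$ descends to this quotient, which it does since $e_{\boldsymbol{\alpha}}\cdot(\text{const})$ need not be constant — in fact $U$ does \emph{not} map constants to constants, so strictly the chirp space should be taken modulo the subspace $e_{-\boldsymbol{\alpha}}\cdot(\text{constants})$. I would phrase the statement and proof consistently with however Lemma \ref{3th:9} is interpreted in the references \cite{d,g2}, noting that the isometric identification $U$ carries whichever null subspace is used on one side to its image on the other, so the two completeness assertions are equivalent. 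Apart from this bookkeeping, there is no real obstacle: the proof is a two-line transport-of-structure argument once the isometry $\|f\|_{BMO^{\boldsymbol{\alpha}}}=\|e_{\boldsymbol{\alpha}}f\|_{BMO}$ is recorded.
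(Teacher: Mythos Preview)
Your proposal is correct and follows exactly the route the paper indicates: the authors do not write out a separate proof but simply note that ``the proof follows the same pattern as that of Theorem~\ref{3th:2} and is based on Lemma~\ref{3th:9},'' which is precisely your transport-of-structure argument via the isometry $f\mapsto e_{\boldsymbol{\alpha}}f$. Your additional remark about the seminorm/quotient issue is a fair caveat that the paper itself does not address.
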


The proof follows the same pattern as that of
  Theorem \ref{3th:2} and is based on  Lemma \ref{3th:9}.

 \subsection{Dual spaces of   chirp Hardy spaces}

  \ \ \ \  Let $\boldsymbol{\alpha}=(\alpha_1,\alpha_2,\dots,\alpha_n)\in \mathbb{R}^n$ with $\alpha_k\notin \pi \mathbb{Z}$ for all  $k=1,2,\dots,n$.
  We discuss the dual spaces of chirp Hardy spaces $H^p_{\boldsymbol{\alpha}}(\mathbb{R}^n)  $ for $0<p\leq1$.  When $p=1$, we have the following theorem.
 \begin{theorem}\label{3th:3}
 $(H^1_{\boldsymbol{\alpha}})^*(\mathbb{R}^n)=BMO^{-\boldsymbol{\alpha}}(\mathbb{R}^n)$.  That is,\\
(1) For any $g\in BMO^{-\boldsymbol{\alpha}}(\mathbb{R}^n)$,
$$L(f):=\int_{\mathbb{R}^n}f(x)g(x)dx$$
is  a bounded linear functional on $H^1_{\boldsymbol{\alpha}}(\mathbb{R}^n)$ and $\|L\|\leq C\|g\|_{BMO^{-\boldsymbol{\alpha}}}$.\\
(2) For any bounded linear functionals $L$ defined on $H^1_{\boldsymbol{\alpha}}(\mathbb{R}^n)$, there exists a $g\in BMO^{-\boldsymbol{\alpha}}(\mathbb{R}^n)$ such that
$$L(f):=\int_{\mathbb{R}^n}f(\boldsymbol{x})g(\boldsymbol{x})d\boldsymbol{x}, $$
 for   any $f\in H^1_{\boldsymbol{\alpha}}(\mathbb{R}^n)$, $\|g\|_{BMO^{-{\boldsymbol{\alpha}}}}\leq C\|L\|$.
\end{theorem}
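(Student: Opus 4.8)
The whole point of the chirp Hardy space is that it is an isometric copy of the ordinary Hardy space under multiplication by the chirp $e_{\boldsymbol{\alpha}}$. So the strategy is to transfer the classical duality $(H^1)^* = BMO$ through this chirp conjugation rather than to redo the Fefferman--Stein argument from scratch. Concretely, the map $U_{\boldsymbol{\alpha}}\colon f\mapsto e_{\boldsymbol{\alpha}}f$ is, by the very definition of the norms, an isometric bijection $H^1_{\boldsymbol{\alpha}}(\mathbb{R}^n)\to H^1(\mathbb{R}^n)$, with inverse $g\mapsto e_{-\boldsymbol{\alpha}}g$; its formal adjoint sends a functional $\Lambda$ on $H^1$ to the functional $f\mapsto \Lambda(e_{\boldsymbol{\alpha}}f)$ on $H^1_{\boldsymbol{\alpha}}$. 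Hence $(H^1_{\boldsymbol{\alpha}})^*$ is isometrically the image of $(H^1)^* = BMO$ under this adjoint, and one only has to identify that image with $BMO^{-\boldsymbol{\alpha}}$.

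**Step 1: the easy direction (1).** Let $g\in BMO^{-\boldsymbol{\alpha}}(\mathbb{R}^n)$. Unwinding the definition of $\|\cdot\|_{BMO^{-\boldsymbol{\alpha}}}$, the function $e_{-\boldsymbol{\alpha}}g$ lies in the ordinary $BMO(\mathbb{R}^n)$ with $\|e_{-\boldsymbol{\alpha}}g\|_{BMO}=\|g\|_{BMO^{-\boldsymbol{\alpha}}}$. For $f\in H^1_{\boldsymbol{\alpha}}$ we have $e_{\boldsymbol{\alpha}}f\in H^1$ with equal norms, and since $|e_{\boldsymbol{\alpha}}|=1$ we may write
$$
L(f)=\int_{\mathbb{R}^n}f(\boldsymbol{x})g(\boldsymbol{x})\,d\boldsymbol{x}=\int_{\mathbb{R}^n}\bigl(e_{\boldsymbol{\alpha}}(\boldsymbol{x})f(\boldsymbol{x})\bigr)\bigl(e_{-\boldsymbol{\alpha}}(\boldsymbol{x})g(\boldsymbol{x})\bigr)\,d\boldsymbol{x}.
$$
The classical pairing of $H^1$ with $BMO$ (initially on a dense class such as $H^1\cap L^2$, or on finite linear combinations of atoms, and then extended by density using Theorem \ref{3th:2}) gives $|L(f)|\le C\|e_{\boldsymbol{\alpha}}f\|_{H^1}\|e_{-\boldsymbol{\alpha}}g\|_{BMO}=C\|f\|_{H^1_{\boldsymbol{\alpha}}}\|g\|_{BMO^{-\boldsymbol{\alpha}}}$, which is (1). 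The only care needed here is the standard one about defining the pairing $\int fg$ when $f\in H^1$ is merely a distribution and $g\in BMO$: one does it first for $f$ an atom (where the cancellation of $g$ is used) and invokes the atomic decomposition.

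**Step 2: the converse direction (2).** Given a bounded linear functional $L$ on $H^1_{\boldsymbol{\alpha}}$, define $\Lambda(h):=L(e_{-\boldsymbol{\alpha}}h)$ for $h\in H^1$. Since $h\mapsto e_{-\boldsymbol{\alpha}}h$ is an isometry $H^1\to H^1_{\boldsymbol{\alpha}}$, $\Lambda$ is a bounded functional on $H^1$ with $\|\Lambda\|=\|L\|$. By the Fefferman duality theorem there is $b\in BMO(\mathbb{R}^n)$ with $\Lambda(h)=\int h\,b$ for all $h$ in a suitable dense class, and $\|b\|_{BMO}\le C\|\Lambda\|$. Set $g:=e_{\boldsymbol{\alpha}}b$; then $g\in BMO^{-\boldsymbol{\alpha}}$ with $\|g\|_{BMO^{-\boldsymbol{\alpha}}}=\|b\|_{BMO}\le C\|L\|$, and for $f\in H^1_{\boldsymbol{\alpha}}$, applying the above with $h=e_{\boldsymbol{\alpha}}f$,
$$
L(f)=\Lambda(e_{\boldsymbol{\alpha}}f)=\int_{\mathbb{R}^n}e_{\boldsymbol{\alpha}}(\boldsymbol{x})f(\boldsymbol{x})\,b(\boldsymbol{x})\,d\boldsymbol{x}=\int_{\mathbb{R}^n}f(\boldsymbol{x})g(\boldsymbol{x})\,d\boldsymbol{x},
$$
as claimed. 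Uniqueness of $g$ modulo the kernel of the pairing follows from uniqueness of $b$ modulo constants, i.e. $g$ is determined up to $e_{\boldsymbol{\alpha}}\cdot(\text{constant})$.

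**Main obstacle.** Nothing here is deep once the chirp conjugation is in place; the real work is bookkeeping about in which sense the integral $\int fg$ is defined and making sure the chirp factor $e_{\boldsymbol{\alpha}}$ — which is smooth and unimodular but not bounded in any Hardy/BMO seminorm by itself — is only ever multiplied against the paired object, never estimated alone. In particular one must check that multiplication by $e_{\boldsymbol{\alpha}}$ genuinely preserves the class of admissible test objects used to define the $H^1$--$BMO$ pairing (finite atomic combinations, or $H^1\cap L^2$), which it does since $|e_{\boldsymbol{\alpha}}|\equiv 1$ and $e_{\boldsymbol{\alpha}}\in C^\infty$; the moment condition defining atoms is of course not preserved by this multiplication, which is exactly why $BMO$ must be replaced by $BMO^{-\boldsymbol{\alpha}}$ on the dual side. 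I would also remark that the sign flip $\boldsymbol{\alpha}\mapsto -\boldsymbol{\alpha}$ in the target space is forced by the fact that the chirp multiplying $f$ in $L(f)=\int fg$ is $e_{\boldsymbol{\alpha}}$, so the chirp that must be attached to $g$ to land in ordinary $BMO$ is $e_{-\boldsymbol{\alpha}}$.
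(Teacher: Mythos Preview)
Your proposal is correct and follows essentially the same route as the paper: both directions are obtained by transferring the classical Fefferman duality $(H^1)^*=BMO$ through the chirp conjugation $f\mapsto e_{\boldsymbol{\alpha}}f$, and your Step~2 is almost verbatim the paper's argument for part~(2). The only cosmetic difference is in part~(1): where you invoke the classical $H^1$--$BMO$ pairing as a black box, the paper writes out the atomic decomposition $e_{\boldsymbol{\alpha}}f=\sum_j\lambda_j a_j$ explicitly and uses the cancellation of each $(1,\infty,0)$-atom against $(e_{-\boldsymbol{\alpha}}g)_Q$ to bound $|L(f)|$ directly---but this is exactly the standard proof of the pairing you are quoting, so the two presentations are equivalent.
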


Before proving the theorem, we   need to recall    some known results.
\begin{lemma}\emph{(\cite{l})}\label{3th:4}
For all $1\leq q\leq\infty$, it   follows that $H^{p,q,s}_{atom}(\mathbb{R}^n)=H^p(\mathbb{R}^n)$ and $\|f\|_{H^{p,q,s}_{atom}}\approx \|f\|_{H^ p}$ for $f\in H^p(\mathbb{R}^n)$.
\end{lemma}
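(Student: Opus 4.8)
The plan is to prove the two set inclusions separately, each together with the corresponding norm inequality, so that $H^{p,q,s}_{\mathrm{atom}}(\mathbb{R}^n)=H^p(\mathbb{R}^n)$ with $\|f\|_{H^{p,q,s}_{\mathrm{atom}}}\approx\|f\|_{H^p}$. The easy inclusion $H^{p,q,s}_{\mathrm{atom}}\subseteq H^p$ amounts to a uniform bound $\|a\|_{H^p}\le C$ for every $(p,q,s)$-atom $a$, while the reverse inclusion $H^p\subseteq H^{p,q,s}_{\mathrm{atom}}$ is the genuine atomic decomposition and carries the real difficulty.

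First I would treat the easy inclusion. Let $a$ be a $(p,q,s)$-atom supported in a cube $Q$ centered at $x_0$ with side length $\ell$, and split $\int_{\mathbb{R}^n}M(a;P)^p$ over a fixed dilate $Q^{\ast}=2\sqrt{n}\,Q$ and its complement. On $Q^{\ast}$ I would dominate the Poisson maximal function by the Hardy--Littlewood maximal operator, apply H\"older's inequality with exponent $q/p>1$ and the $L^q$-boundedness of the maximal operator, and use the size bound $\|a\|_{L^q}\le|Q|^{1/q-1/p}$ to obtain $\int_{Q^{\ast}}M(a;P)^p\le C|Q|^{1-p/q}\|a\|_{L^q}^p\le C$. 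Off $Q^{\ast}$ I would exploit the cancellation: subtract from $y\mapsto P_t(x-y)$ its degree-$s$ Taylor polynomial at $y=x_0$, which the vanishing moments $\int a(\boldsymbol{y})\boldsymbol{y}^{\boldsymbol{\beta}}\,d\boldsymbol{y}=0$ (for $|\boldsymbol{\beta}|\le s$) annihilate, leaving a pointwise bound of the form $M(a;P)(\boldsymbol{x})\le C\,\ell^{\,n+s+1}/|\boldsymbol{x}-x_0|^{\,n+s+1}$. Integrating its $p$-th power converges exactly because $s\ge[n(\tfrac1p-1)]$ forces $p(n+s+1)>n$, again giving a bound independent of $a$. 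Since $0<p\le1$, the sublinearity of $M(\,\cdot\,;P)$ and $p$-subadditivity yield, for $f=\sum_j\lambda_j a_j$,
\[
\|f\|_{H^p}^p\le\sum_j|\lambda_j|^p\,\|a_j\|_{H^p}^p\le C\sum_j|\lambda_j|^p,
\]
and taking the infimum over admissible representations gives $\|f\|_{H^p}\le C\|f\|_{H^{p,q,s}_{\mathrm{atom}}}$.

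The reverse inclusion is the Calder\'on--Zygmund/Fefferman--Stein atomic decomposition. I would first replace the Poisson maximal function in the definition of $\|\cdot\|_{H^p}$ by the grand maximal function, invoking the standard equivalence of the $H^p$ maximal characterizations, since the grand maximal function is the convenient object for the construction. Then, for $f\in H^p$ with grand maximal function $\mathcal{M}f$, I would form the nested open level sets $\Omega_k=\{\mathcal{M}f>2^k\}$, take a Whitney decomposition of each $\Omega_k$ into cubes $Q_{k,i}$, and a smooth partition of unity subordinate to it. Projecting $f$ off the polynomials of degree $\le s$ on each Whitney cube produces functions which, after normalization, are constant multiples of $(p,q,s)$-atoms; organizing the telescoping differences across the levels $k$ yields $f=\sum_j\lambda_j a_j$ with $|\lambda_j|\approx 2^k|Q_{k,i}|^{1/p}$. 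The key quantitative estimate is
\[
\sum_j|\lambda_j|^p\le C\sum_k 2^{kp}\,|\Omega_k|\le C\,\|\mathcal{M}f\|_{L^p}^p\le C\,\|f\|_{H^p}^p,
\]
so that $\|f\|_{H^{p,q,s}_{\mathrm{atom}}}\le C\|f\|_{H^p}$; since the atoms built this way are in fact bounded, hence $(p,\infty,s)$-atoms, the identification holds for every admissible $q$ simultaneously.

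The main obstacle is this second direction: verifying that the pieces coming from the Whitney cubes, after subtraction of their polynomial projections to enforce the moment conditions for $|\boldsymbol{\beta}|\le s$, genuinely satisfy the atomic size and support bounds, and that the resulting series converges to $f$ both in $S'(\mathbb{R}^n)$ and in $H^p$. Controlling the polynomial corrections and the bounded overlap of the Whitney cubes is the technical heart of the argument; by contrast, the equivalence of the maximal characterizations and the easy inclusion are comparatively routine. A secondary delicate point is the borderline counting $s\ge[n(\tfrac1p-1)]$, which is precisely what makes both the tail integral in the easy inclusion and the moment cancellation in the hard inclusion succeed.
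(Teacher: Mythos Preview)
Your outline is the standard Coifman--Latter argument and is essentially correct, but there is nothing in the paper to compare it against: the paper states this lemma as a citation to \cite{l} and gives no proof of its own, using it purely as a black box in the proofs of Theorems~\ref{3th:3} and the $0<p<1$ duality theorem. So your proposal is not ``the same as'' or ``different from'' the paper's proof---the paper simply does not supply one.

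That said, your sketch is a faithful summary of the classical route (easy inclusion via uniform $H^p$-bound on atoms from size plus Taylor-remainder cancellation; hard inclusion via grand-maximal level sets, Whitney cubes, and polynomial projections), and the obstacles you flag---convergence of the atomic series in $S'$ and in $H^p$, control of the polynomial corrections, and the role of the threshold $s\ge[n(\tfrac1p-1)]$---are exactly the right ones. One minor caveat: in the easy inclusion you should also handle the case $q=1$ separately (there H\"older with exponent $q/p$ degenerates when $p=1$, and one instead uses the weak-type $(1,1)$ bound for the maximal operator), but this is routine.
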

\begin{lemma}\label{3th:5}\emph{(\cite{d}, \cite{f}, \cite{fs}) } $(H^1)^*(\mathbb{R}^n)=BMO(\mathbb{R}^n)$.
\end{lemma}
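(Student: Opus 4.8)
The plan is to prove this Fefferman--Stein duality by establishing the two inclusions $BMO \hookrightarrow (H^1)^*$ and $(H^1)^* \hookrightarrow BMO$ separately, throughout identifying $H^1(\mathbb{R}^n)$ with its atomic realization $H^{1,q,0}_{atom}(\mathbb{R}^n)$ via Lemma \ref{3th:4}, so that it suffices to test functionals against atoms.

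For the inclusion $BMO \hookrightarrow (H^1)^*$, I would fix $g \in BMO$ and define $L_g(f) = \int_{\mathbb{R}^n} fg$ initially on finite linear combinations of $(1,\infty,0)$-atoms. The decisive point is the atom--$BMO$ pairing estimate: if $a$ is a $(1,\infty,0)$-atom supported on a cube $Q$, its vanishing mean allows subtracting the constant ${\rm Avg}_Q g$, so that
$$\left|\int_{\mathbb{R}^n} a g\right| = \left|\int_Q a\,(g-{\rm Avg}_Q g)\right| \le \|a\|_{L^\infty}\int_Q |g-{\rm Avg}_Q g| \le |Q|^{-1}\int_Q |g-{\rm Avg}_Q g| \le \|g\|_{BMO}.$$
Summing against an atomic decomposition $f=\sum_j \lambda_j a_j$ and taking the infimum over all such decompositions gives $|L_g(f)| \le C\|g\|_{BMO}\|f\|_{H^1}$, whence $L_g$ extends to a bounded functional on $H^1$ with $\|L_g\|\le C\|g\|_{BMO}$.

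For the converse $(H^1)^* \hookrightarrow BMO$, I would start from $L \in (H^1)^*$ and localize. For a fixed cube $Q$, every mean-zero $h \in L^2(Q)$ is, after normalization, a scalar multiple of a $(1,2,0)$-atom, so $\|h\|_{H^1}\le C|Q|^{1/2}\|h\|_{L^2(Q)}$ by Lemma \ref{3th:4}. Thus $L$ is $L^2$-bounded on the Hilbert space of such $h$, and the Riesz representation theorem supplies a mean-zero $g_Q \in L^2(Q)$ with $L(h)=\int_Q h\,g_Q$ and $\|g_Q\|_{L^2(Q)}\le C\|L\|\,|Q|^{1/2}$. Since $g_Q$ and $g_{Q'}$ represent the same functional on the smaller cube when $Q\subset Q'$, they differ there only by an additive constant; patching along an increasing exhaustion of $\mathbb{R}^n$ produces a single $g\in L^1_{loc}(\mathbb{R}^n)$ with $g-{\rm Avg}_Q g = g_Q$ on each $Q$. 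Cauchy--Schwarz then yields
$$\frac{1}{|Q|}\int_Q |g-{\rm Avg}_Q g| \le |Q|^{-1/2}\|g_Q\|_{L^2(Q)} \le C\|L\|,$$
so $g\in BMO$ with $\|g\|_{BMO}\le C\|L\|$, and $L(f)=\int_{\mathbb{R}^n} fg$ holds first on atoms supported in a fixed cube and then on all of $H^1$ by density.

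The hard part will be the patching step together with the justification that the representation $L(f)=\int_{\mathbb{R}^n} fg$ survives the passage from finite atomic sums to arbitrary $f\in H^1$: the pairing $\int fg$ is only conditionally convergent for a general $g\in BMO$, so one must exploit the cancellation of each atom (or invoke the John--Nirenberg inequality to control the exponential integrability of $g$) in order to guarantee that each $\int a_j\,(g-c)$ converges absolutely and that $L$ may be interchanged with the atomic series.
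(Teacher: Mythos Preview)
Your outline is the standard Fefferman--Stein argument and is correct as a sketch; the atom--$BMO$ pairing, the localization via $(1,2,0)$-atoms and Riesz representation, and the patching up to constants are exactly the classical steps, and you have correctly flagged the delicate issue of passing from finite atomic sums to general $f\in H^1$.

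However, note that the paper does not prove this lemma at all: it is stated with citations to Duoandikoetxea, Fefferman, and Fefferman--Stein and is invoked as a known black box in the proof of Theorem~\ref{3th:3}. So there is no ``paper's own proof'' to compare against. Your proposal is precisely the argument one finds in those cited references; if anything, the paper's later proof of Theorem~\ref{3th:3} mirrors only the easy direction of your outline (the atom--$BMO$ pairing) when it shows $BMO^{-\boldsymbol{\alpha}}\hookrightarrow (H^1_{\boldsymbol{\alpha}})^*$, and for the converse it simply transfers the classical result via the chirp multiplication $f\mapsto e_{\boldsymbol{\alpha}}f$ rather than reproving anything.
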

Now, we will go back to prove   Theorem \ref{3th:3}.
\begin{proof}
For any $f\in H^1_{\boldsymbol{\alpha}}(\mathbb{R}^n)$, by Lemma \ref{3th:4}, we  obtain   $e_{\boldsymbol{\alpha}} f\in H^1(\mathbb{R}^n)$ such  that $e_{\boldsymbol{\alpha}} f=\sum^\infty_{j=1}\lambda_ja_j$, where $a_j$ is $(1,\infty,0)$-atom. Suppose that $g\in BMO^{-\boldsymbol{\alpha}}(\mathbb{R}^n)$. We   have
\begin{align*}
|L(f)|=& \ \left|\int_{\mathbb{R}^n}f(\boldsymbol{x})g(\boldsymbol{x})d\boldsymbol{x}\right|\\
=&\  \left|\int_{\mathbb{R}^n} e_{-\boldsymbol{\alpha}}(\boldsymbol{x}) \sum^\infty_{j=1}\lambda_ja_j(\boldsymbol{x})g(\boldsymbol{x})d\boldsymbol{x}\right|\\
=&\  \left| \sum^\infty_{j=1}\lambda_j \int_{Q_j }a_j(\boldsymbol{x})[ e_{-\boldsymbol{\alpha}}(\boldsymbol{x})g(\boldsymbol{x})-(e_{-\boldsymbol{\alpha}} g)_Q ]d\boldsymbol{x}\right|\\
\leq&\ \sum^\infty_{j=1}|\lambda_j |\frac{1}{|Q_j|}\int_{Q_j }  | e_{-\boldsymbol{\alpha}}(\boldsymbol{x})g(\boldsymbol{x})-(e_{-\boldsymbol{\alpha}} g)_Q |d\boldsymbol{x}\\
\leq&\  C\sum^\infty_{j=1}|\lambda_j|  \|g\|_{BMO^{-\boldsymbol{\alpha}}}\\
=&\  C\|e_\alpha f\|_{H^1}  \|g\|_{BMO^{-\boldsymbol{\alpha}}}\\
=&\  C\|  f\|_{H^1_\alpha}  \|g\|_{BMO^{-\boldsymbol{\alpha}}}.
\end{align*}
Then $L$ is  a bounded linear functional on $H^1_{\boldsymbol{\alpha}}(\mathbb{R}^n)$ and $\|L\|\leq C\|g\|_{BMO^{-{\boldsymbol{\alpha}}}}$.

Now given an $L\in(H^1_{\boldsymbol{\alpha}})^*(\mathbb{R}^n)$, denote $\tilde{L}(f):=L(e_{-\boldsymbol{\alpha}}f)$ for any $f\in H^1(\mathbb{R}^n)$. We have
$$|\tilde{L}(f)|=|L(e_{-\boldsymbol{\alpha}}f)|\leq \|L\|\  \|  e_{-\boldsymbol{\alpha}}f\|_{H^1_{\boldsymbol{\alpha}}}=\|L\| \  \|   f\|_{H^1 }.$$
Hence,  $\tilde{L}\in(H^1)^*(\mathbb{R}^n)$ and $\|\tilde{L}\| \leq\|L\| $.

From  Lemma \ref{3th:5},    $\tilde{g}\in BMO(\mathbb{R}^n) $ exists such that $\tilde{L}(f)=\int_{\mathbb{R}^n}f(\boldsymbol{x})\tilde{g}(\boldsymbol{x})d\boldsymbol{x}$ for any $f\in H^1(\mathbb{R}^n)$ and $\|\tilde{g}\|_{BMO}\leq C\|\tilde{L}\|$.

For any $f\in H^1_\alpha(\mathbb{R}^n)$,  $e_{\boldsymbol{\alpha}}f\in H^1(\mathbb{R}^n)$. We  obtain
 \begin{align*}
 L(f)=\tilde{L}(e_{\boldsymbol{\alpha}} f)
 =\int_{\mathbb{R}^n} e_{\boldsymbol{\alpha}}(\boldsymbol{x})f(\boldsymbol{x})\tilde{g}(\boldsymbol{x})d\boldsymbol{x}
 =\int_{\mathbb{R}^n}f(\boldsymbol{x}) g (\boldsymbol{x})d\boldsymbol{x},
 \end{align*}
 where $ g= e_{\boldsymbol{\alpha}}\tilde{g}$. Since $\tilde{g}\in BMO(\mathbb{R}^n)$ and $\|\tilde{g}\|_{BMO}\leq C\|\tilde{L}\|$, $ g \in BMO^{-\boldsymbol{\alpha}}(\mathbb{R}^n)$ and $\| g \|_{BMO^{-\boldsymbol{\alpha}}}=\|\tilde{g}\|_{BMO}\leq C\|\tilde{L}\|\leq C\|L\|,$  which completes the proof of the theorem.
 \end{proof}
 \begin{remark}
When $\boldsymbol{\alpha}=(\frac{\pi}{2}+k_1\pi,\frac{\pi}{2}+k_2\pi,\dots, \frac{\pi}{2}+k_n\pi), k_j\in \mathbb{Z} $ for $j=1,2,\dots,m$, $(H^1_{\boldsymbol{\alpha}})^*(\mathbb{R}^n)=BMO^{-\boldsymbol{\alpha}}(\mathbb{R}^n)$  reduces to $(H^1)^*(\mathbb{R}^n)=BMO(\mathbb{R}^n)$.
\end{remark}
Now let us  proceed to consider  the dual space of the chirp  Hardy space when $0<p<1$.
\begin{lemma} \label{3th:6}\emph{(\cite{tw})} For $g\in L^1_{loc}(\mathbb{R}^n)$, $Q$   is an any cube in $\mathbb{R}^n$ and $s\in\mathbb{Z}^+$. Then there exists a unique polynomial $P_Q(g)$ whose  degree does not exceed $s$ that satisfies
$$\int_Q[g(\boldsymbol{x})-P_Q(g)(\boldsymbol{x})]\boldsymbol{x}^{\alpha}d\boldsymbol{x}=0,\ \ \ 0\leq|\alpha|\leq s.$$
\end{lemma}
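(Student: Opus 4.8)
The plan is to read this as the assertion that $g$ has a unique $L^2(Q)$-orthogonal projection onto the finite-dimensional space $\mathcal{P}_s$ of polynomials on $\mathbb{R}^n$ of degree at most $s$, where the pairing in play is $\langle p,q\rangle_Q=\int_Q p(\boldsymbol{x})q(\boldsymbol{x})\,d\boldsymbol{x}$. First I would note that $\mathcal{P}_s$ is spanned by the monomials $\{\boldsymbol{x}^{\alpha}:0\le|\alpha|\le s\}$, and that, viewed as functions on the cube $Q$, these are linearly independent: a polynomial vanishing on a set of positive Lebesgue measure is identically $0$. Hence the symmetric bilinear form $\langle\cdot,\cdot\rangle_Q$ is positive definite on $\mathcal{P}_s$, because $\langle p,p\rangle_Q=\int_Q p(\boldsymbol{x})^2\,d\boldsymbol{x}\ge 0$ with equality only when $p$ vanishes a.e.\ on $Q$, i.e.\ $p\equiv 0$. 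Thus $(\mathcal{P}_s,\langle\cdot,\cdot\rangle_Q)$ is an honest finite-dimensional inner product space, and the moments $\int_Q g(\boldsymbol{x})\boldsymbol{x}^{\alpha}\,d\boldsymbol{x}$ are finite for every $|\alpha|\le s$ since $g\in L^1_{loc}(\mathbb{R}^n)$ and $\boldsymbol{x}^{\alpha}$ is bounded on the bounded set $Q$.

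With this in hand, I would turn the defining conditions into a linear system. Writing $P_Q(g)=\sum_{|\beta|\le s}c_\beta\boldsymbol{x}^{\beta}$, the requirement $\int_Q[g-P_Q(g)]\boldsymbol{x}^{\alpha}\,d\boldsymbol{x}=0$ for $0\le|\alpha|\le s$ is equivalent to
$$\sum_{|\beta|\le s}c_\beta\langle\boldsymbol{x}^{\beta},\boldsymbol{x}^{\alpha}\rangle_Q=\int_Q g(\boldsymbol{x})\boldsymbol{x}^{\alpha}\,d\boldsymbol{x},\qquad 0\le|\alpha|\le s.$$
The coefficient matrix here is the Gram matrix of the monomial basis with respect to $\langle\cdot,\cdot\rangle_Q$; by the positive-definiteness established above it is symmetric positive definite, hence invertible. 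Therefore the system has exactly one solution $(c_\beta)$, which produces a polynomial $P_Q(g)\in\mathcal{P}_s$ (so of degree $\le s$) that is both an existing solution and the only solution of the moment conditions.

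If a coordinate-free presentation is preferred, I would instead run Gram--Schmidt on $\{\boldsymbol{x}^{\alpha}\}_{|\alpha|\le s}$ to obtain an orthonormal basis $\{\phi_\alpha\}$ of $\mathcal{P}_s$ for $\langle\cdot,\cdot\rangle_Q$, set $P_Q(g):=\sum_{\alpha}\big(\int_Q g\,\phi_\alpha\big)\phi_\alpha$, and observe that $g-P_Q(g)$ is then $\langle\cdot,\cdot\rangle_Q$-orthogonal to each $\phi_\alpha$, hence to every monomial of degree $\le s$; this gives existence. Uniqueness follows because any two solutions $P_1,P_2\in\mathcal{P}_s$ have difference $P_1-P_2\in\mathcal{P}_s$ orthogonal to all such monomials, hence orthogonal to itself, forcing $\langle P_1-P_2,P_1-P_2\rangle_Q=0$ and $P_1-P_2\equiv 0$.

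There is no genuine obstacle in this argument: it is entirely finite-dimensional linear algebra. The only two points that deserve a line of justification are the positive-definiteness of $\langle\cdot,\cdot\rangle_Q$ on $\mathcal{P}_s$ — which rests on the elementary fact that a nonzero polynomial cannot vanish on a set of positive measure — and the finiteness of the moments $\int_Q g\,\boldsymbol{x}^{\alpha}\,d\boldsymbol{x}$, which is immediate from $g\in L^1_{loc}$ and the boundedness of $Q$.
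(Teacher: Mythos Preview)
Your argument is correct and complete: the problem is exactly that of orthogonal projection onto the finite-dimensional subspace $\mathcal{P}_s$ with respect to the $L^2(Q)$ inner product, and you have handled both the finiteness of the moments (from $g\in L^1_{\mathrm{loc}}$ and boundedness of monomials on $Q$) and the invertibility of the Gram matrix (from the fact that a nonzero polynomial cannot vanish on a set of positive measure). Either of your two presentations---solving the linear system with the positive-definite Gram matrix, or building $P_Q(g)$ from an orthonormal basis via Gram--Schmidt---is a valid and standard route.

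As for comparison with the paper: there is nothing to compare. The paper does not supply a proof of this lemma; it is simply quoted from Taibleson--Weiss \cite{tw} and used as a black box in the definition of the Campanato--Meyers spaces that follow. Your write-up therefore fills in a gap the authors chose to leave to the literature, and the approach you take is the natural one.
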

 \begin{definition} \emph{(\cite{tw})}
For $s\in\mathbb{Z}^+$, $0\leq[n\beta]\leq s$ and $1\leq q'\leq\infty$, the Campanato-Meyers space $L(\beta,q',s)(\mathbb{R}^n)$ is defined as the set of   locally integrable functions $g$  that satisfy
$$\|g\|_{L(\beta,q',s)}=\sup_{Q\subset\mathbb{R}^n}|Q|^{-\beta}\left[\int_Q|g(\boldsymbol{x}) -P_Q(g)(\boldsymbol{x}) |^{q'}\frac{d\boldsymbol{x}}{|Q|}\right]^{\frac{1}{q'}}<\infty,$$
where $P_Q(g)$  is determined by Lemma \ref{3th:6}.
\end{definition}
\begin{lemma}\label{3th:7}\emph{(\cite{cw,w1})}
$(H^p)^*(\mathbb{R}^n)=L(\frac{1}{p},q',s)(\mathbb{R}^n)$, where $0<p<1\leq q\leq\infty$, $s\in\mathbb{Z}$, $s\geq n(\frac{1}{p}-1)$ and $1/q+1/q'=1$.
\end{lemma}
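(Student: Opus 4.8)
\emph{Proof idea.} This is the classical Hardy--Campanato duality (worked out in detail in \cite{cw,w1}); the plan is to establish the two inclusions separately, following the template of the case $p=1$ behind Lemma \ref{3th:5}. For notational economy I would fix $q=q'=2$ throughout, the general range $1\le q\le\infty$ then reducing to it because the Campanato--Meyers spaces $L(\tfrac1p,q',s)$ coincide for all admissible $q'$ and all integers $s\ge n(\tfrac1p-1)$. Relative to $p=1$ there are two new features: the polynomial correction $P_Q(g)$ now has positive degree $s$, and the sum over an atomic decomposition is controlled using the embedding $\ell^p\hookrightarrow\ell^1$ in place of the triangle inequality.

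\emph{First inclusion, $L(\tfrac1p,q',s)\subseteq(H^p)^*$.} Given $g\in L(\tfrac1p,2,s)$, I would set $L(f)=\int_{\mathbb R^n}f\,g$ for $f$ a finite linear combination of $(p,2,s)$-atoms and estimate $L$ on one atom $a$ supported in a cube $Q$. The vanishing moments of $a$ up to order $s$ allow me to subtract the polynomial $P_Q(g)$ from Lemma \ref{3th:6}, so that $\int a\,g=\int_Q a\,(g-P_Q(g))$; Hölder's inequality together with the size bound $\|a\|_{L^2}\le|Q|^{1/2-1/p}$ and the definition of $\|g\|_{L(1/p,2,s)}$ then gives $|\int a\,g|\lesssim\|g\|_{L(1/p,2,s)}$, the powers of $|Q|$ cancelling exactly --- this cancellation being precisely what the atom normalisation and the constraint $s\ge n(\tfrac1p-1)$ are designed to achieve. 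Summing over an atomic decomposition $f=\sum_j\lambda_j a_j$ gives $|L(f)|\le\sum_j|\lambda_j|\,|L(a_j)|\lesssim\|g\|_{L(1/p,2,s)}\sum_j|\lambda_j|\le\|g\|_{L(1/p,2,s)}\big(\sum_j|\lambda_j|^p\big)^{1/p}$ since $0<p\le1$; taking the infimum over decompositions and invoking Lemma \ref{3th:4} yields $|L(f)|\lesssim\|g\|_{L(1/p,2,s)}\|f\|_{H^p}$, after which a standard density argument extends $L$ to all of $H^p$ and shows that it does not depend on the decomposition chosen.

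\emph{Second inclusion, $(H^p)^*\subseteq L(\tfrac1p,q',s)$.} Here I would localise. For $L\in(H^p)^*$ and a cube $Q$, let $L^2_s(Q)$ be the subspace of $L^2(Q)$ consisting of functions whose moments up to order $s$ vanish; every nonzero $\phi\in L^2_s(Q)$ becomes a $(p,2,s)$-atom after dividing by $|Q|^{1/p-1/2}\|\phi\|_{L^2}$, whence $\|\phi\|_{H^p}\le|Q|^{1/p-1/2}\|\phi\|_{L^2}$ and $|L(\phi)|\le\|L\|\,|Q|^{1/p-1/2}\|\phi\|_{L^2}$. Thus $L$ restricts to a bounded functional on $L^2_s(Q)$; extending it by Hahn--Banach and representing it through the self-duality of $L^2(Q)$ produces $g_Q\in L^2(Q)$ with $L(\phi)=\int_Q\phi\,g_Q$ on $L^2_s(Q)$ and $\|g_Q\|_{L^2(Q)}\lesssim\|L\|\,|Q|^{1/p-1/2}$, i.e. the Campanato--Meyers estimate over $Q$ with constant $\lesssim\|L\|$. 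It then remains to glue the family $\{g_Q\}$: if $Q\subset Q'$, then $g_Q-g_{Q'}$ annihilates $L^2_s(Q)$ and so agrees on $Q$ with a polynomial of degree $\le s$, so normalising each representative by $P_Q(g_Q)=0$ renders the family coherent; exhausting $\mathbb R^n$ by an increasing sequence of cubes and passing to the limit produces a single $g\in L^1_{\mathrm{loc}}(\mathbb R^n)$ inheriting the bound $\|g\|_{L(1/p,2,s)}\lesssim\|L\|$ and satisfying $L(f)=\int f\,g$ on finite atomic sums, hence on all of $H^p$ by density.

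The step I expect to be the main obstacle is the gluing in the second inclusion: one must choose the local representatives $g_Q$ in a mutually consistent way despite their ambiguity modulo polynomials of degree $\le s$, verify that the limiting $g$ is genuinely locally integrable, and check that the gluing does not inflate the Campanato--Meyers norm. The remaining ingredients --- Hölder's inequality, the atomic decomposition of $H^p$ (Lemma \ref{3th:4}), the moment-annihilating polynomial (Lemma \ref{3th:6}) and the Hahn--Banach/$L^2$ representation --- are routine, and the passage from $q'=2$ to general $q'$ is the known coincidence of the Campanato--Meyers spaces.
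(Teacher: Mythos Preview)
The paper does not prove this lemma at all; it is quoted from the literature (Coifman--Weiss \cite{cw} and Walsh \cite{w1}) and used as a black box in the proof of the chirp analogue that follows. Your sketch is the standard route of those references --- atomic decomposition plus H\"older for the inclusion $L(\tfrac1p,q',s)\subseteq(H^p)^*$, and local Riesz representation via Hahn--Banach on $L^2_s(Q)$ followed by a gluing argument for the converse --- so there is nothing to compare against in the paper itself. One small remark: the cancellation of the powers of $|Q|$ in your first inclusion comes solely from matching the atom size normalisation against the Campanato exponent; the constraint $s\ge n(\tfrac1p-1)$ is what allows the subtraction of $P_Q(g)$ (it guarantees the atom has enough vanishing moments), not what makes the exponents close up.
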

Now let us  define a new Campanato-Meyers space with chirps as follows:
 \begin{definition}
For $s\in\mathbb{Z}^+$, $0\leq[n\beta]\leq s$ and $1\leq q'\leq\infty$. The chirp Campanato-Meyers space $L_{\boldsymbol{\alpha}}(\beta,q',s)(\mathbb{R}^n)$ is defined as the set of   locally integrable functions $g$  that satisfy
$$\|g\|_{L_{\boldsymbol{\alpha}}(\beta,q',s)}=\sup_{Q\subset\mathbb{R}^n}|Q|^{-\beta}\left[\int_Q|e_{\boldsymbol{\alpha}}(\boldsymbol{x})g (\boldsymbol{x}) -P_Q(e_{\boldsymbol{\alpha}}g)(\boldsymbol{x}) |^{q'}\frac{d\boldsymbol{x}}{|Q|}\right]^{\frac{1}{q'}}<\infty,$$
where $P_Q(e_{\boldsymbol{\alpha}}g)$  is determined by Lemma \ref{3th:6}.
\end{definition}
\begin{theorem}
$(H_{\boldsymbol{\alpha}}^p)^*(\mathbb{R}^n)=L_{-\boldsymbol{\alpha}}(\frac{1}{p},q',s)(\mathbb{R}^n)$, where $0<p<1\leq q\leq\infty$, $s\in\mathbb{Z}$, $s\geq n(\frac{1}{p}-1)$ and $1/q+1/q'=1$.
\end{theorem}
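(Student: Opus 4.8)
The plan is to mimic the proof of Theorem \ref{3th:3} verbatim, transporting the known duality $(H^p)^*=L(\frac1p,q',s)$ (Lemma \ref{3th:7}) across the chirp multiplication $f\mapsto e_{\boldsymbol{\alpha}}f$, which is an isometric isomorphism $H^p_{\boldsymbol{\alpha}}(\mathbb{R}^n)\to H^p(\mathbb{R}^n)$ by the very definition of the chirp Hardy space. First I would observe that, just as in Theorem \ref{3th:2}, $f\in H^p_{\boldsymbol{\alpha}}$ if and only if $e_{\boldsymbol{\alpha}}f\in H^p$ with $\|f\|_{H^p_{\boldsymbol{\alpha}}}=\|e_{\boldsymbol{\alpha}}f\|_{H^p}$; combined with the atomic decomposition of $H^p$ (the $0<p<1$ analogue of Lemma \ref{3th:4}, valid for admissible triples $(p,q,s)$), every $f\in H^p_{\boldsymbol{\alpha}}$ gives $e_{\boldsymbol{\alpha}}f=\sum_j\lambda_j a_j$ with $a_j$ a $(p,q,s)$-atom supported in a cube $Q_j$ and $\sum_j|\lambda_j|^p\lesssim\|f\|_{H^p_{\boldsymbol{\alpha}}}^p$.

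For the containment $L_{-\boldsymbol{\alpha}}(\frac1p,q',s)\subset (H^p_{\boldsymbol{\alpha}})^*$, given $g\in L_{-\boldsymbol{\alpha}}(\frac1p,q',s)$ I would set $L(f)=\int f g$ and, writing $f=e_{-\boldsymbol{\alpha}}(e_{\boldsymbol{\alpha}}f)=e_{-\boldsymbol{\alpha}}\sum_j\lambda_j a_j$, estimate $|L(f)|\le\sum_j|\lambda_j|\,\big|\int_{Q_j} a_j(\boldsymbol{x})\big[e_{-\boldsymbol{\alpha}}(\boldsymbol{x})g(\boldsymbol{x})-P_{Q_j}(e_{-\boldsymbol{\alpha}}g)(\boldsymbol{x})\big]d\boldsymbol{x}\big|$, using the vanishing moments of $a_j$ up to order $s$ to subtract the polynomial $P_{Q_j}(e_{-\boldsymbol{\alpha}}g)$. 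Hölder with exponents $q,q'$, the atom size bound $\|a_j\|_{L^q}\le|Q_j|^{1/q-1/p}$, and the definition of $\|g\|_{L_{-\boldsymbol{\alpha}}(\frac1p,q',s)}$ (with $\beta=1/p$) give the cube-by-cube bound $|Q_j|^{1/q-1/p}\cdot|Q_j|^{1/p}|Q_j|^{-1/q}\|g\|_{L_{-\boldsymbol{\alpha}}}=\|g\|_{L_{-\boldsymbol{\alpha}}}$, whence $|L(f)|\lesssim\big(\sum_j|\lambda_j|^p\big)^{1/p}\|g\|_{L_{-\boldsymbol{\alpha}}}\lesssim\|f\|_{H^p_{\boldsymbol{\alpha}}}\|g\|_{L_{-\boldsymbol{\alpha}}}$ (using $\sum|\lambda_j|\le(\sum|\lambda_j|^p)^{1/p}$ for $p\le1$).

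For the reverse inclusion, given $L\in(H^p_{\boldsymbol{\alpha}})^*$ I would define $\tilde L(h):=L(e_{-\boldsymbol{\alpha}}h)$ for $h\in H^p(\mathbb{R}^n)$; since $\|e_{-\boldsymbol{\alpha}}h\|_{H^p_{\boldsymbol{\alpha}}}=\|h\|_{H^p}$, $\tilde L\in(H^p)^*$ with $\|\tilde L\|=\|L\|$. By Lemma \ref{3th:7} there is $\tilde g\in L(\frac1p,q',s)$ with $\tilde L(h)=\int h\tilde g$ and $\|\tilde g\|_{L(\frac1p,q',s)}\lesssim\|\tilde L\|$. Setting $g:=e_{\boldsymbol{\alpha}}\tilde g$, for any $f\in H^p_{\boldsymbol{\alpha}}$ we have $e_{\boldsymbol{\alpha}}f\in H^p$ and $L(f)=\tilde L(e_{\boldsymbol{\alpha}}f)=\int e_{\boldsymbol{\alpha}}(\boldsymbol{x})f(\boldsymbol{x})\tilde g(\boldsymbol{x})d\boldsymbol{x}=\int f(\boldsymbol{x})g(\boldsymbol{x})d\boldsymbol{x}$. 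Finally one checks directly from the definitions that $g\in L_{-\boldsymbol{\alpha}}(\frac1p,q',s)$ with $\|g\|_{L_{-\boldsymbol{\alpha}}(\frac1p,q',s)}=\|\tilde g\|_{L(\frac1p,q',s)}$: indeed $e_{-\boldsymbol{\alpha}}g=\tilde g$, so $e_{-\boldsymbol{\alpha}}(\boldsymbol{x})g(\boldsymbol{x})-P_Q(e_{-\boldsymbol{\alpha}}g)(\boldsymbol{x})=\tilde g(\boldsymbol{x})-P_Q(\tilde g)(\boldsymbol{x})$ cube by cube, and the $L^{q'}$-averages coincide. The only genuinely delicate point—the one I would flag as the main obstacle—is the moment/polynomial bookkeeping: one must verify that for $0<p<1$ the atoms carry enough vanishing moments ($s\ge n(\frac1p-1)$) to annihilate $P_{Q_j}(e_{-\boldsymbol{\alpha}}g)$, and that subtracting this polynomial is legitimate inside the pairing despite $e_{-\boldsymbol{\alpha}}g$ being only locally integrable; this is exactly the $0<p<1$ analogue of the BMO-subtraction step in Theorem \ref{3th:3} and goes through by the same reasoning once Lemma \ref{3th:6} is invoked for the cube $Q_j$.
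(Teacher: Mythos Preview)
Your proposal is correct and follows essentially the same route as the paper's own proof: both directions are obtained by transporting the classical duality $(H^p)^*=L(\tfrac{1}{p},q',s)$ through the isometry $f\mapsto e_{\boldsymbol{\alpha}}f$, using the atomic decomposition of $e_{\boldsymbol{\alpha}}f$ and the vanishing moments of $(p,q,s)$-atoms to insert $P_{Q_j}(e_{-\boldsymbol{\alpha}}g)$ for the first inclusion, and defining $\tilde L(h)=L(e_{-\boldsymbol{\alpha}}h)$ to invoke Lemma~\ref{3th:7} for the reverse. Your write-up is in fact slightly more detailed than the paper's (you spell out why $\|g\|_{L_{-\boldsymbol{\alpha}}}=\|\tilde g\|_{L}$ via $e_{-\boldsymbol{\alpha}}g=\tilde g$), though you should double-check the exponent bookkeeping in the cube-by-cube H\"older step against the precise normalization in the definition of $L(\beta,q',s)$.
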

\begin{proof}
For any $f\in H^p_{\boldsymbol{\alpha}}(\mathbb{R}^n)$, by Lemma \ref{3th:4}, we have  $e_{\boldsymbol{\alpha}} f\in H^p(\mathbb{R}^n)$ such  that $e_{\boldsymbol{\alpha}} f=\sum^\infty_{j=1}\lambda_ja_j$, where $a_j$ is a $(p,q,s)$-atom. Suppose that $g\in L_{-\boldsymbol{\alpha}}(\frac{1}{p},q',s)(\mathbb{R}^n)$. We have 
\begin{align*}
|L(f)|=&\  \left|\int_{\mathbb{R}^n} e_{-\boldsymbol{\alpha}}(\boldsymbol{x}) \sum^\infty_{j=1}\lambda_ja_j(\boldsymbol{x})g(\boldsymbol{x})d\boldsymbol{x}\right|\\
=&\  \left| \sum^\infty_{j=1}\lambda_j \int_{Q_j }a_j(\boldsymbol{x})[ e_{-\alpha}(\boldsymbol{x})g(\boldsymbol{x})-P_{Q_j}(e_{-\boldsymbol{\alpha}}(\boldsymbol{x})g(\boldsymbol{x})) ]d\boldsymbol{x}\right|\\
\leq&\ \sum^\infty_{j=1}|\lambda_j |\|a_j\|_{L^q}\left(\int_{Q_j }  | e_{-\boldsymbol{\alpha}}(\boldsymbol{x})g(\boldsymbol{x})-P_{Q_j}(e_{-\boldsymbol{\alpha}}(\boldsymbol{x})g(\boldsymbol{x}) ) |^{q'}d\boldsymbol{x}\right)^{\frac{1}{q'}}\\
\leq&\  C\left(\sum^\infty_{j=1}|\lambda_j|^{p}\right)^{\frac{1}{p}}  \|g\|_{L_{-\boldsymbol{\alpha}}(\frac{1}{p},q',s)}\\
=&\  C\|  f\|_{H^p_\alpha}  \|g\|_{L_{-\boldsymbol{\alpha}}(\frac{1}{p},q',s)}.
\end{align*}
Then $L$ is  a bounded linear functional on $H^p_{\boldsymbol{\alpha}}(\mathbb{R}^n)$ and $\|L\|\leq C\|g\|_{L_{-\boldsymbol{\alpha}}(\frac{1}{p},q',s)}$.

Given  $L\in(H^p_{\boldsymbol{\alpha}})^*(\mathbb{R}^n)$, denote $\tilde{L}(f):=L(e_{-\boldsymbol{\alpha}}f)$ for any $f\in H^p(\mathbb{R}^n)$. We have
$$|\tilde{L}(f)|=|L(e_{-\boldsymbol{\alpha}}f)|\leq \|L\|\  \|  e_{-\boldsymbol{\alpha}}f\|_{H^p_{\boldsymbol{\alpha}}}=\|L\| \  \|   f\|_{H^p }.$$
Hence,  $\tilde{L}\in(H^p)^*(\mathbb{R}^n)$ and $\|\tilde{L}\| \leq\|L\| $.

By Lemma \ref{3th:7},  there exists $\tilde{g}\in L(\frac{1}{p},q',s)(\mathbb{R}^n)$ such that $\tilde{L}(f)=\int_{\mathbb{R}^n}f(\boldsymbol{x})\tilde{g}(\boldsymbol{x})d\boldsymbol{x}$ for any $f\in H^p(\mathbb{R}^n)$ and $\|\tilde{g}\|_{L_{\boldsymbol{\alpha}}(\frac{1}{p},q',s)}\leq C\|\tilde{L}\|$.

For any $f\in H^p_\alpha(\mathbb{R}^n)$,  $e_{\boldsymbol{\alpha}}f\in H^p(\mathbb{R}^n)$. We  obtain
 \begin{align*}
 L(f)=&\ \tilde{L}(e_{\boldsymbol{\alpha}} f)
 =  \int_{\mathbb{R}^n} e_{\boldsymbol{\alpha}}(\boldsymbol{x})f(\boldsymbol{x})\tilde{g}(\boldsymbol{x})d\boldsymbol{x}
 = \int_{\mathbb{R}^n}f(\boldsymbol{x}) g (\boldsymbol{x})d\boldsymbol{x},
 \end{align*}
 where $ g= e_{\boldsymbol{\alpha}}\tilde{g}$. Since $\tilde{g}\in L(\frac{1}{p},q',s)(\mathbb{R}^n)$ and $\|\tilde{g}\|_{L(\frac{1}{p},q',s)}\leq C\|\tilde{L}\|$, then $ g \in L_{\boldsymbol{-\alpha}}(\frac{1}{p},q',s)$ and $\| g \|_{L_{\boldsymbol{-\alpha}}(\frac{1}{p},q',s)}=\|\tilde{g}\|_{L(\frac{1}{p},q',s)}\leq C\|\tilde{L}\|\leq C\|L\|,$  which completes the proof of the theorem.
 \end{proof}

 \begin{remark}
When $0<p<1$ and $\boldsymbol{\alpha}=(\frac{\pi}{2}+k_1\pi,\frac{\pi}{2}+k_2\pi,\dots, \frac{\pi}{2}+k_n\pi), k_j\in \mathbb{Z} $ for $j=1,2,\dots,m$, $(H_{\boldsymbol{\alpha}}^p)^*(\mathbb{R}^n)=L_{-\boldsymbol{\alpha}}(\frac{1}{p},q',s)(\mathbb{R}^n)$    reduces to $(H^p)^*(\mathbb{R}^n)=L(\frac{1}{p},q',s)(\mathbb{R}^n)$.
\end{remark}

 \subsection{Characterization of the boundedness of singular integral operators with chirp functions on chirp Hardy spaces}

\ \ \ \  In this subsection we obtain  a characterization of the boundedness of $T_{\boldsymbol{\alpha}}$ in $H^p_{\boldsymbol{\alpha}}$.
\begin{theorem}
$T_{\boldsymbol{\alpha}}$ is bounded from $H^p_{\boldsymbol{\alpha}}(\mathbb{R}^n)$ to $H^p_{\boldsymbol{\alpha}}(\mathbb{R}^n)$ if and only if $T$ is bounded from $H^p (\mathbb{R}^n)$ to $H^p (\mathbb{R}^n)$, where $\boldsymbol{\alpha}=(\alpha_1,\alpha_2,\dots,\alpha_n)\in \mathbb{R}^n$ with $\alpha_k\notin \pi \mathbb{Z}$.
\end{theorem}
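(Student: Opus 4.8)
The plan is to observe that conjugation by the chirp multiplier $M_{\boldsymbol{\alpha}}\colon f\mapsto e_{\boldsymbol{\alpha}}f$ simultaneously intertwines $T_{\boldsymbol{\alpha}}$ with $T$ and identifies $H^p_{\boldsymbol{\alpha}}(\mathbb{R}^n)$ isometrically with $H^p(\mathbb{R}^n)$, so that the asserted equivalence reduces to transporting a bounded map along an isometric isomorphism. First I would record the two identities that do all the work. From $K^{\boldsymbol{\alpha}}(\boldsymbol{x},\boldsymbol{y})=e_{-\boldsymbol{\alpha}}(\boldsymbol{x})K(\boldsymbol{x},\boldsymbol{y})e_{\boldsymbol{\alpha}}(\boldsymbol{y})$ one gets, for $f\in S(\mathbb{R}^n)$, the operator identity $T_{\boldsymbol{\alpha}}f=e_{-\boldsymbol{\alpha}}\,T(e_{\boldsymbol{\alpha}}f)$, i.e.\ $T_{\boldsymbol{\alpha}}=M_{-\boldsymbol{\alpha}}\circ T\circ M_{\boldsymbol{\alpha}}$ (equivalently $T=M_{\boldsymbol{\alpha}}\circ T_{\boldsymbol{\alpha}}\circ M_{-\boldsymbol{\alpha}}$, since $M_{\boldsymbol{\alpha}}M_{-\boldsymbol{\alpha}}=I$); this is immediate because the factor $e_{-\boldsymbol{\alpha}}(\boldsymbol{x})$ pulls out of the $\boldsymbol{y}$-integral and $e_{\boldsymbol{\alpha}}f\in S(\mathbb{R}^n)$. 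On the norm side, the definition of the chirp Poisson maximal function gives $M_{\boldsymbol{\alpha}}(f;P)=M(e_{\boldsymbol{\alpha}}f;P)$, hence $\|f\|_{H^p_{\boldsymbol{\alpha}}}=\|e_{\boldsymbol{\alpha}}f\|_{H^p}$, and---exactly as in the proof of Theorem~\ref{3th:2}---$f\in H^p_{\boldsymbol{\alpha}}$ if and only if $e_{\boldsymbol{\alpha}}f\in H^p$; thus $M_{\boldsymbol{\alpha}}$ is a surjective isometry from $H^p_{\boldsymbol{\alpha}}$ onto $H^p$ with inverse $M_{-\boldsymbol{\alpha}}$.

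Granting these, the argument is the same conjugation used for rotation-invariant spaces, with $\|\cdot\|_{H^p}$ in place of $\|\cdot\|_X$ and the isometry $M_{\boldsymbol{\alpha}}$ in place of the invariance. If $T_{\boldsymbol{\alpha}}$ is bounded on $H^p_{\boldsymbol{\alpha}}$, then for $g\in H^p$ set $f=e_{-\boldsymbol{\alpha}}g\in H^p_{\boldsymbol{\alpha}}$; since $Tg=e_{\boldsymbol{\alpha}}\,T_{\boldsymbol{\alpha}}f$ we get
\[
\|Tg\|_{H^p}=\|e_{\boldsymbol{\alpha}}\,T_{\boldsymbol{\alpha}}f\|_{H^p}=\|T_{\boldsymbol{\alpha}}f\|_{H^p_{\boldsymbol{\alpha}}}\le C\,\|f\|_{H^p_{\boldsymbol{\alpha}}}=C\,\|e_{\boldsymbol{\alpha}}f\|_{H^p}=C\,\|g\|_{H^p}.
\]
Conversely, if $T$ is bounded on $H^p$, then for $f\in H^p_{\boldsymbol{\alpha}}$ we have $e_{\boldsymbol{\alpha}}f\in H^p$ and $T_{\boldsymbol{\alpha}}f=e_{-\boldsymbol{\alpha}}\,T(e_{\boldsymbol{\alpha}}f)$, whence $\|T_{\boldsymbol{\alpha}}f\|_{H^p_{\boldsymbol{\alpha}}}=\|T(e_{\boldsymbol{\alpha}}f)\|_{H^p}\le C\,\|e_{\boldsymbol{\alpha}}f\|_{H^p}=C\,\|f\|_{H^p_{\boldsymbol{\alpha}}}$. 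Specializing to $\boldsymbol{\alpha}=(\tfrac{\pi}{2}+k_1\pi,\dots,\tfrac{\pi}{2}+k_n\pi)$ recovers the classical statement for $T$ on $H^p$.

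The part that requires care---though it is bookkeeping rather than a real obstacle---is making the identity $T_{\boldsymbol{\alpha}}=M_{-\boldsymbol{\alpha}}TM_{\boldsymbol{\alpha}}$ legitimate at the level of $H^p$ and $H^p_{\boldsymbol{\alpha}}$, not merely on $S(\mathbb{R}^n)$. Concretely I would verify: (i) $e_{\boldsymbol{\alpha}}$ is a $C^\infty$ function with $|e_{\boldsymbol{\alpha}}|\equiv1$ and polynomially bounded derivatives, so $M_{\boldsymbol{\alpha}}$ preserves $S(\mathbb{R}^n)$ and $S'(\mathbb{R}^n)$ and carries bounded tempered distributions to bounded tempered distributions, which is what makes ``$f\in H^p_{\boldsymbol{\alpha}}\Leftrightarrow e_{\boldsymbol{\alpha}}f\in H^p$'' well posed; (ii) a dense subclass of $H^p_{\boldsymbol{\alpha}}$---e.g.\ $e_{-\boldsymbol{\alpha}}$ times finite linear combinations of $(p,q,s)$-atoms, which $M_{\boldsymbol{\alpha}}$ sends onto a dense subclass of $H^p$ by Lemma~\ref{3th:4}---on which the identity $T_{\boldsymbol{\alpha}}=M_{-\boldsymbol{\alpha}}TM_{\boldsymbol{\alpha}}$ holds pointwise; and (iii) that the (a priori assumed) bounded extension of $T_{\boldsymbol{\alpha}}$ to $H^p_{\boldsymbol{\alpha}}$ agrees there with $M_{-\boldsymbol{\alpha}}TM_{\boldsymbol{\alpha}}$, by uniqueness of bounded extensions from a dense subspace. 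Once this is in place the two norm inequalities above are immediate and the theorem follows.
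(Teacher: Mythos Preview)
Your proof is correct and follows essentially the same route as the paper: both arguments rest on the conjugation identity $T_{\boldsymbol{\alpha}}=e_{-\boldsymbol{\alpha}}\,T(e_{\boldsymbol{\alpha}}\,\cdot)$ together with the isometry $\|f\|_{H^p_{\boldsymbol{\alpha}}}=\|e_{\boldsymbol{\alpha}}f\|_{H^p}$, from which the two norm inequalities are read off exactly as you wrote them. Your version is in fact more careful than the paper's---you flag the density and well-posedness issues that the paper simply passes over in silence.
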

\begin{proof}
Suppose that $f\in S'$ and $\|T_{\boldsymbol{\alpha}}\|_{H^p_{\boldsymbol{\alpha}}\rightarrow H^p_{\boldsymbol{\alpha}}}<\infty$. Then, we have
\begin{align*}
\| T(f) \|_{H^p }=&\ \Bigg \| \sup_{t>0} |P_t \ast (e_{\boldsymbol{\alpha}}(e_{-\boldsymbol{\alpha}}Tf))|\Bigg\|_{L^p}\\
=&\  \|T_{\boldsymbol{\alpha}} (e_{-\boldsymbol{\alpha}}f) \|_{H^p_{\boldsymbol{\alpha}}}\\
\leq&\ C\|  e_{-\boldsymbol{\alpha}} f  \|_{H^p_{\boldsymbol{\alpha}}}\\
=&\ C\|   f   \|_{H^p }.
\end{align*}
 Conversely, when $\|T\|_{H^p\rightarrow H^p}<\infty$, we obtain
\begin{align*}
\| T_{\boldsymbol{\alpha}}(f) \|_{H^p_{\boldsymbol{\alpha}}}  = \Bigg\| \sup_{t>0} |P_t \ast ( T( e_{\boldsymbol{\alpha}} f))|\Bigg\|_{L^p}
=  \|T ( e_{ \boldsymbol{\alpha}} f) \|_{H^p }
 \leq C\|  e_{ \boldsymbol{\alpha}} f  \|_{H^p }
 = C\|    f  \|_{H^p_{\boldsymbol{\alpha}} }.
\end{align*}
Hence, the theorem follows.
\end{proof}

\section{Application of the fractional Riesz transform in partial differential equations}

\label{sect:Lp}

The fractional Riesz transforms can be used   to reconcile   various combinations of partial derivatives of functions.  We first established the derivative formula of the FRFT.
\begin{lemma}\emph{(FRFT derivative formula)}\label{4th:1}
Suppose that $f\in L^1(R^n)$. If $e_{\boldsymbol{ \alpha}}f$ is absolutely continuous on $\mathbb{R}^n$ with respect to the $k$th  variable, we have
$$\mathcal{F}_{\boldsymbol{\alpha}}\left(e_{-\boldsymbol{\alpha}}(\boldsymbol{y})\frac{\partial[e_{ \boldsymbol{\alpha}}(\boldsymbol{y})f(\boldsymbol{y})]}{\partial y_k}\right)(\boldsymbol{x})=ix_k\csc \alpha_k\mathcal{F}_\alpha (f)(\boldsymbol{x}),$$
where $\boldsymbol{x}=(x_1,x_2,\dots,x_n)$ and $\boldsymbol{\alpha}=(\alpha_1,\alpha_2,\dots,\alpha_n)\in \mathbb{R}^n$ with $\alpha_k\notin \pi \mathbb{Z}, \ k=1,2,\dots,n$.\end{lemma}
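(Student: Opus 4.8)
The plan is to reduce the identity to the classical Fourier–transform derivative formula by means of the factorization of the multidimensional FRFT recorded in the Remark following Definition \ref{de1.1}, namely $\mathcal{F}_{\boldsymbol{\alpha}}(h)(\boldsymbol{x})=c(\boldsymbol{\alpha})\,e_{\boldsymbol{\alpha}}(\boldsymbol{x})\,\mathcal{F}(e_{\boldsymbol{\alpha}}h)(\tilde{\boldsymbol{x}})$, where $\tilde{\boldsymbol{x}}=(x_1\csc\alpha_1,\dots,x_n\csc\alpha_n)$. Set $g(\boldsymbol{y}):=e_{-\boldsymbol{\alpha}}(\boldsymbol{y})\,\partial_{y_k}\!\big[e_{\boldsymbol{\alpha}}(\boldsymbol{y})f(\boldsymbol{y})\big]$; since $|e_{-\boldsymbol{\alpha}}|\equiv 1$, the hypothesis that $e_{\boldsymbol{\alpha}}f$ is absolutely continuous in $y_k$ (together with the well-posedness of the left-hand side) guarantees $g\in L^1(\mathbb{R}^n)$, so $\mathcal{F}_{\boldsymbol{\alpha}}(g)$ is defined. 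Applying the factorization with $h=g$ and using the cancellation $e_{\boldsymbol{\alpha}}g=e_{\boldsymbol{\alpha}}e_{-\boldsymbol{\alpha}}\,\partial_{y_k}(e_{\boldsymbol{\alpha}}f)=\partial_{y_k}(e_{\boldsymbol{\alpha}}f)$, one obtains
\[
\mathcal{F}_{\boldsymbol{\alpha}}(g)(\boldsymbol{x})=c(\boldsymbol{\alpha})\,e_{\boldsymbol{\alpha}}(\boldsymbol{x})\,\mathcal{F}\big(\partial_{y_k}(e_{\boldsymbol{\alpha}}f)\big)(\tilde{\boldsymbol{x}}).
\]

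Next I would invoke the classical fact that, for the normalization $\widehat{\phi}(\xi)=(\sqrt{2\pi})^{-n}\int_{\mathbb{R}^n}\phi(\boldsymbol{y})e^{-i\boldsymbol{y}\cdot\xi}\,d\boldsymbol{y}$, one has $\mathcal{F}(\partial_{y_k}\phi)(\xi)=i\xi_k\,\widehat{\phi}(\xi)$ provided $\phi$ and $\partial_{y_k}\phi$ are integrable and $\phi$ is absolutely continuous in $y_k$. Applied to $\phi=e_{\boldsymbol{\alpha}}f$ at the point $\xi=\tilde{\boldsymbol{x}}$, and using $\tilde{x}_k=x_k\csc\alpha_k$, this gives $\mathcal{F}(\partial_{y_k}(e_{\boldsymbol{\alpha}}f))(\tilde{\boldsymbol{x}})=ix_k\csc\alpha_k\,\mathcal{F}(e_{\boldsymbol{\alpha}}f)(\tilde{\boldsymbol{x}})$. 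Substituting this back into the displayed identity and recognizing $c(\boldsymbol{\alpha})\,e_{\boldsymbol{\alpha}}(\boldsymbol{x})\,\mathcal{F}(e_{\boldsymbol{\alpha}}f)(\tilde{\boldsymbol{x}})=\mathcal{F}_{\boldsymbol{\alpha}}(f)(\boldsymbol{x})$ yields precisely $\mathcal{F}_{\boldsymbol{\alpha}}(g)(\boldsymbol{x})=ix_k\csc\alpha_k\,\mathcal{F}_{\boldsymbol{\alpha}}(f)(\boldsymbol{x})$, which is the claim. Equivalently, and avoiding any appeal to the factorization, one may insert the explicit kernel $K_{\boldsymbol{\alpha}}(\boldsymbol{y},\boldsymbol{x})=c(\boldsymbol{\alpha})(\sqrt{2\pi})^{-n}e_{\boldsymbol{\alpha}}(\boldsymbol{y})e_{\boldsymbol{\alpha}}(\boldsymbol{x})e^{-i\boldsymbol{y}\cdot\tilde{\boldsymbol{x}}}$ directly into $\mathcal{F}_{\boldsymbol{\alpha}}(g)(\boldsymbol{x})=\int g(\boldsymbol{y})K_{\boldsymbol{\alpha}}(\boldsymbol{y},\boldsymbol{x})\,d\boldsymbol{y}$, use $e_{-\boldsymbol{\alpha}}(\boldsymbol{y})e_{\boldsymbol{\alpha}}(\boldsymbol{y})=1$, and integrate by parts in $y_k$.

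The only genuine point requiring care is the vanishing of the boundary terms in the integration by parts underlying the derivative formula. Here I would argue via Fubini: for a.e. fixed $\boldsymbol{y}'=(y_1,\dots,y_{k-1},y_{k+1},\dots,y_n)$, the slice $t\mapsto (e_{\boldsymbol{\alpha}}f)(y_1,\dots,t,\dots,y_n)$ and its a.e.\ derivative are both integrable on $\mathbb{R}$; absolute continuity in $y_k$ lets us recover the slice as the integral of its derivative, so it possesses limits as $t\to\pm\infty$, and integrability forces these limits to be $0$. Hence the one-dimensional integration by parts in $y_k$ contributes no boundary term, and Fubini reassembles the $n$-dimensional identity. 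Everything else—the cancellation of the chirp factors and the change of spectral variable $\xi\mapsto\tilde{\boldsymbol{x}}$—is immediate, so I expect this boundary-term justification to be the main (and essentially the only) obstacle.
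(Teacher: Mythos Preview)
Your proposal is correct and essentially coincides with the paper's argument: the paper inserts the explicit kernel $K_{\boldsymbol{\alpha}}$, cancels the chirp factors, applies Fubini, and integrates by parts in $y_k$---exactly the ``equivalently'' alternative you sketch in your second paragraph. Your primary route via the factorization $\mathcal{F}_{\boldsymbol{\alpha}}(h)=c(\boldsymbol{\alpha})e_{\boldsymbol{\alpha}}\mathcal{F}(e_{\boldsymbol{\alpha}}h)(\tilde{\boldsymbol{x}})$ is a cleaner repackaging of the same computation, and your justification of the vanishing boundary terms is more explicit than the paper's, which simply asserts the integration-by-parts step.
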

\begin{proof}Since $e_{\boldsymbol{ \alpha}}f$ is absolutely continuous on $\mathbb{R}^n$ with respect to the $k$th  variable, we can get that $\frac{\partial[e_{ \boldsymbol{\alpha}}(\boldsymbol{y})f(\boldsymbol{y})]}{\partial y_k}\in L^1(\mathbb{R}^n)$. For  $f\in L^1(\mathbb{R}^n)$, we have
\begin{align*}
\mathcal{F}_{\boldsymbol{\alpha}}\left(e_{-\boldsymbol{\alpha}}(\boldsymbol{y})\frac{\partial[e_{ \boldsymbol{\alpha}}(\boldsymbol{y})f(\boldsymbol{y})]}{\partial y_k}\right)(\boldsymbol{x}) =&\ \frac{c(\boldsymbol{\alpha})}{\left(\sqrt{2\pi}\right)^{n}}\int_{\mathbb{R}^n}\left(e_{-\boldsymbol{\alpha}}(\boldsymbol{y})\frac{\partial[e_{ \boldsymbol{\alpha}}(\boldsymbol{y})f(\boldsymbol{y})]}{\partial y_k}\right)e_{ \alpha}(\boldsymbol{y})\\
 &\ \ \ \ \ \ \ \ \ \ \ \ \ \ \ \ \times e_{ \alpha}(\boldsymbol{x})e^{-i\sum^n_{j=1}x_jy_j\csc\alpha_j}d\boldsymbol{y}\\
 =&\  \frac{c({\boldsymbol{\alpha}})}{\left(\sqrt{2\pi}\right)^{n}}e_{ \boldsymbol{\alpha}}(\boldsymbol{x})\int_{\mathbb{R}^n} \frac{\partial[e_{ \boldsymbol{\alpha}}(\boldsymbol{y})f(\boldsymbol{y})]}{\partial y_k} e^{-i\sum^n_{j=1}x_jy_j\csc\alpha_j}d\boldsymbol{y}\\
 =&\ \frac{c({\boldsymbol{\alpha}})e_{ \boldsymbol{\alpha}}(\boldsymbol{x})}{\left(\sqrt{2\pi}\right)^{n}}\int_{\mathbb{R}^{n-1}}\int_{\mathbb{R} }  \frac{\partial[e_{ \alpha}(\boldsymbol{y})f(\boldsymbol{y})]}{\partial y_k}e^{-ix_ky_k\csc \alpha_k } dy_k \\
 &\ \ \ \ \ \ \ \ \ \ \ \ \ \ \ \ \times\prod^n_{j=1,j\neq k}e^{-ix_jy_j\csc\alpha_j}\prod^n_{j=1,j\neq k}dy_j.
\end{align*}
As $e_{ \boldsymbol{\alpha}}f$  is absolutely continuous on $\mathbb{R}^n$ with respect to the $k$th variable, an integration by parts yields 
$$\int_{\mathbb{R} }  \frac{\partial[e_{ \boldsymbol{\alpha}}(\boldsymbol{y})f(\boldsymbol{y})]}{\partial y_k}e^{-ix_ky_k\csc \alpha_k}dy_k  = ix_k \csc \alpha_k \int_{\mathbb{R} }e_{ \boldsymbol{\alpha}}(\boldsymbol{y})f(\boldsymbol{y})e^{-ix_ky_k\csc \alpha_k }dy_k.$$
Then
\begin{align*}
\mathcal{F}_{{\boldsymbol{\alpha}}}\left(e_{-\boldsymbol{\alpha}}(\boldsymbol{y})\frac{\partial[e_{ \boldsymbol{\alpha}}(\boldsymbol{y})f(\boldsymbol{y})]}{\partial y_k}\right)(\boldsymbol{x}) =&\ ix_k \csc \alpha_k\frac{c(\boldsymbol{\alpha})}{(\sqrt{2\pi})^{n}}\int_{\mathbb{R}^n}e_{ \alpha}(\boldsymbol{y})f(\boldsymbol{y})e_{ \boldsymbol{\alpha}}(\boldsymbol{x})e^{-i\sum^n_{j=1}x_jy_j\csc\alpha_j}d\boldsymbol{y}\\
=&\ ix_k \csc \alpha_k\mathcal{F}_{\alpha} (f)(\boldsymbol{x}),
\end{align*}
which completes the proof of the lemma.
\end{proof}
\begin{lemma}\emph{(FRFT derivative formula)}\label{4th:2}
Suppose that $f\in L^1(\mathbb{R}^n)$ and $x_kf(\boldsymbol{x})\in L^1(\mathbb{R}^n)$. Then, we  have
$$\frac{\partial(e_{-\boldsymbol{\alpha}}(\boldsymbol{x})\mathcal{F}_{\boldsymbol{\alpha}} (f)(\boldsymbol{x}))}{\partial x_k}=e_{-\boldsymbol{\alpha}}(\boldsymbol{x})\mathcal{F}_{\boldsymbol{\alpha}}(-iy_k\csc\alpha_kf(\boldsymbol{y}))(\boldsymbol{x}),$$
for $\boldsymbol{\alpha}=(\alpha_1,\alpha_2,\dots,\alpha_n)\in \mathbb{R}^n$ with $\alpha_k\notin \pi \mathbb{Z},\ k=1,2,\dots,n.$\end{lemma}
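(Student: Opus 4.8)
The plan is to reduce the identity to a routine differentiation-under-the-integral-sign argument, using the Fourier-type representation of $\mathcal{F}_{\boldsymbol{\alpha}}$ recorded in the Remark following Definition \ref{de1.1}. First I would invoke the kernel rewriting
$K_{\boldsymbol{\alpha}}(\boldsymbol{x},\boldsymbol{u})=\frac{c(\boldsymbol{\alpha})}{(\sqrt{2\pi})^{n}}e_{\boldsymbol{\alpha}}(\boldsymbol{x})e_{\boldsymbol{\alpha}}(\boldsymbol{u})e^{-i\sum_{k}x_{k}u_{k}\csc\alpha_{k}}$
to write, for $f\in L^{1}(\mathbb{R}^{n})$,
\[
e_{-\boldsymbol{\alpha}}(\boldsymbol{x})\,\mathcal{F}_{\boldsymbol{\alpha}}(f)(\boldsymbol{x})=\frac{c(\boldsymbol{\alpha})}{(\sqrt{2\pi})^{n}}\int_{\mathbb{R}^{n}}e_{\boldsymbol{\alpha}}(\boldsymbol{y})f(\boldsymbol{y})\,e^{-i\sum_{j=1}^{n}x_{j}y_{j}\csc\alpha_{j}}\,d\boldsymbol{y},
\]
which is a legitimate absolutely convergent integral since $|e_{\boldsymbol{\alpha}}|\equiv 1$ and hence $e_{\boldsymbol{\alpha}}f\in L^{1}(\mathbb{R}^{n})$; note also $\alpha_{k}\notin\pi\mathbb{Z}$ ensures $\csc\alpha_{k}$ and $c(\boldsymbol{\alpha})$ are finite.

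Next I would differentiate the last display in the variable $x_{k}$, passing the derivative inside the integral. The $x_{k}$-derivative of the integrand is $-iy_{k}\csc\alpha_{k}\,e_{\boldsymbol{\alpha}}(\boldsymbol{y})f(\boldsymbol{y})\,e^{-i\sum_{j}x_{j}y_{j}\csc\alpha_{j}}$, whose modulus equals $|\csc\alpha_{k}|\,|y_{k}f(\boldsymbol{y})|$, a function independent of $\boldsymbol{x}$. By the hypothesis $x_{k}f(\boldsymbol{x})\in L^{1}(\mathbb{R}^{n})$, this is an integrable majorant, so the standard theorem on differentiation under the integral sign applies and yields
\[
\frac{\partial}{\partial x_{k}}\Big(e_{-\boldsymbol{\alpha}}(\boldsymbol{x})\mathcal{F}_{\boldsymbol{\alpha}}(f)(\boldsymbol{x})\Big)=\frac{c(\boldsymbol{\alpha})}{(\sqrt{2\pi})^{n}}\int_{\mathbb{R}^{n}}(-iy_{k}\csc\alpha_{k})\,e_{\boldsymbol{\alpha}}(\boldsymbol{y})f(\boldsymbol{y})\,e^{-i\sum_{j}x_{j}y_{j}\csc\alpha_{j}}\,d\boldsymbol{y}.
\]

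Finally I would recognize the right-hand side. Since $y_{k}f\in L^{1}(\mathbb{R}^{n})$, its FRFT is defined; multiplying and dividing by $e_{\boldsymbol{\alpha}}(\boldsymbol{x})$ and reading the kernel representation backwards with $f$ replaced by $-iy_{k}\csc\alpha_{k}f(\boldsymbol{y})$ shows that this integral equals exactly $e_{-\boldsymbol{\alpha}}(\boldsymbol{x})\,\mathcal{F}_{\boldsymbol{\alpha}}(-iy_{k}\csc\alpha_{k}f(\boldsymbol{y}))(\boldsymbol{x})$, which is the asserted formula. The only step requiring any care is the interchange of $\partial/\partial x_{k}$ with the integral, and here it is essentially painless: because the chirp factors have modulus one, the dominating function $|\csc\alpha_{k}|\,|y_{k}f|$ carries no $\boldsymbol{x}$-dependence, and the hypothesis $x_{k}f\in L^{1}$ is precisely what makes it integrable.
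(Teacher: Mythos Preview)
Your proof is correct and follows essentially the same route as the paper: both reduce the identity to differentiating the integral representation of $e_{-\boldsymbol{\alpha}}(\boldsymbol{x})\mathcal{F}_{\boldsymbol{\alpha}}(f)(\boldsymbol{x})$ in $x_k$, using the integrable majorant $|\csc\alpha_k|\,|y_k f(\boldsymbol{y})|$ supplied by the hypothesis $x_k f\in L^1$. The only cosmetic difference is that the paper writes out the difference quotient explicitly and invokes dominated convergence directly, whereas you package that step as the standard differentiation-under-the-integral-sign theorem.
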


\begin{proof} Let $\Delta_k=(0,\dots,0,\delta,0,\dots,0), \ \delta\neq0,\  and\  let \ \delta\  be\  the\  kth\  variable$. Then
\begin{align*}
\frac{\partial(e_{-\boldsymbol{\alpha}}(\boldsymbol{x})\mathcal{F}_{\boldsymbol{\alpha}} (f)(\boldsymbol{x}))}{\partial x_k} =&\ \lim_{\delta\rightarrow0}\frac{e_{-\boldsymbol{\alpha}}(\boldsymbol{x}+\triangle k)\mathcal{F}_{\boldsymbol{\alpha}} (f)(\boldsymbol{x}+\triangle k)-e_{-\boldsymbol{\alpha}}(\boldsymbol{x})\mathcal{F}_{\boldsymbol{\alpha}} (f)(\boldsymbol{x})}{\delta}\\
 =&\ \lim_{\delta\rightarrow0}\frac{1}{\delta} \Bigg( \int_{\mathbb{R}^n}\frac{c(\boldsymbol{\alpha})}{\left(\sqrt{2\pi}\right)^{n}}f(\boldsymbol{y}) e_{ \boldsymbol{\alpha}}(\boldsymbol{y})e^{-i(\boldsymbol{x}+\triangle k)\cdot\tilde{\boldsymbol{y}}}d\boldsymbol{y} \\
 & \ \ \ \ \ \ \ \ \ \ \ \ \ \ \ \ \  \ \ \ \ -\int_{\mathbb{R}^n}\frac{c(\boldsymbol{\alpha})}{\left(\sqrt{2\pi}\right)^{n}}f(\boldsymbol{y}) e_{ \boldsymbol{\alpha}}(\boldsymbol{y})e^{-i \boldsymbol{x}  \cdot\tilde{\boldsymbol{y}}}d\boldsymbol{y}\Bigg)\\
 =&\ \lim_{\delta\rightarrow0}\frac{1}{\delta} \left( \int_{\mathbb{R}^n}\frac{c(\boldsymbol{\alpha})}{\left(\sqrt{2\pi}\right)^{n}}f(\boldsymbol{y}) e_{ \boldsymbol{\alpha}}(\boldsymbol{y})e^{-i\boldsymbol{x}\cdot\tilde{\boldsymbol{y}}}(e^{-i \delta \tilde{y}_k}-1)d\boldsymbol{y} \right).
\end{align*}
By $|\frac{e^{-i \delta \tilde{y}_k}-1}{\delta}|\leq2\pi|\tilde{y}_k|$, $x_kf(\boldsymbol{x})\in L^1(\mathbb{R}^n)$ and the Lebesgue dominated convergence theorem we write 
\begin{align*}
\frac{\partial(e_{-\boldsymbol{\alpha}}(\boldsymbol{x})\mathcal{F}_{\boldsymbol{\alpha}} (f)(\boldsymbol{x}))}{\partial x_k}=& \  \int_{\mathbb{R}^n}\frac{c({\boldsymbol{\alpha}})}{\left(\sqrt{2\pi}\right)^{n}}f(\boldsymbol{y}) e_{ \boldsymbol{\alpha}}(\boldsymbol{y})e^{-i \boldsymbol{x} \cdot\tilde{\boldsymbol{y}}}\left[\lim_{\delta\rightarrow0}\frac{1}{\delta}(e^{-i \delta \tilde{y}_k}-1)\right]d\boldsymbol{y}  \\
=& \  \int_{\mathbb{R}^n}\frac{c(\boldsymbol{\alpha})}{\left(\sqrt{2\pi}\right)^{n}}f(\boldsymbol{y}) e_{ \boldsymbol{\alpha}}(\boldsymbol{y})e^{-i \boldsymbol{x} \cdot\tilde{\boldsymbol{y}}}(-i\tilde{y}_k)d\boldsymbol{y}  \\
=& \  e_{{-\boldsymbol{\alpha}}}(\boldsymbol{x})\mathcal{F}_{\boldsymbol{\alpha}}(-iy_k\csc\alpha_kf(\boldsymbol{y}))(\boldsymbol{x}),
\end{align*}
where $\tilde{\boldsymbol{y}}=(\tilde{y}_1,\dots,\tilde{y}_n)=(y_1\csc\alpha_1,\dots,y_n\csc\alpha_n).$
\end{proof}

\subsection{ Application in the a priori bound estimates  of partial differential equations}

\ \ \ \ We will next introduce the  applications of  $R_j^{\boldsymbol{\alpha}}$ in the priori bound estimates.
\begin{theorem}\label{4th:3}
Suppose that $f\in S (\mathbb{R}^2)$. Then, we have the  a priori bound
$$\Big\|\frac{\partial(e_{\boldsymbol{\alpha}} f)}{\partial y_1}\Big\|_{L^p}+\Big\|\frac{\partial(e_{\boldsymbol{\alpha}} f)}{\partial y_2}\Big\|_{L^p}\leq C\Big\|e_{-\boldsymbol{\alpha}}(\boldsymbol{y})\frac{\partial(e_{\boldsymbol{\alpha}}(\boldsymbol{y})f(\boldsymbol{y}))}{\partial y_1} +ie_{\boldsymbol{\alpha}}(\boldsymbol{y})\frac{\partial(e_{\boldsymbol{\alpha}}(\boldsymbol{y})f(\boldsymbol{y}))}{\partial y_2}\Big\|_{L^p},\ \ 1<p<\infty,$$
where $\boldsymbol{\alpha}=(\alpha_1,\alpha_2)\in \mathbb{R}^2$ with $\alpha_k\notin \pi \mathbb{Z},\ k=1,2.$
\end{theorem}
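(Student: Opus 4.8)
The plan is to conjugate the classical two-dimensional a priori estimate by the chirp $e_{\boldsymbol{\alpha}}$, using the FRFT derivative formula of Lemma~\ref{4th:1} together with the multiplier description of $R_j^{\boldsymbol{\alpha}}$ from Theorem~\ref{2th:1}. First I would record the elementary observation that $|e_{\boldsymbol{\alpha}}(\boldsymbol{y})|=|e_{-\boldsymbol{\alpha}}(\boldsymbol{y})|\equiv 1$ (each $\cot\alpha_k$ is real), so for $k=1,2$
\[
\Big\|\frac{\partial(e_{\boldsymbol{\alpha}}f)}{\partial y_k}\Big\|_{L^p}=\Big\|e_{-\boldsymbol{\alpha}}(\boldsymbol{y})\frac{\partial(e_{\boldsymbol{\alpha}}(\boldsymbol{y})f(\boldsymbol{y}))}{\partial y_k}\Big\|_{L^p}=:\big\|D_k^{\boldsymbol{\alpha}}f\big\|_{L^p},
\]
and, interpreting the second chirp factor on the right-hand side of the statement as $e_{-\boldsymbol{\alpha}}$ to match the first, the right-hand side equals $\|D_1^{\boldsymbol{\alpha}}f+iD_2^{\boldsymbol{\alpha}}f\|_{L^p}=:\|u\|_{L^p}$. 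A direct computation gives $D_k^{\boldsymbol{\alpha}}f=\partial_{y_k}f+i\cot\alpha_k\,y_k f\in S(\mathbb{R}^2)$, hence $u\in S(\mathbb{R}^2)$, which legitimizes the manipulations below.

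Next I would pass to the FRFT side. Lemma~\ref{4th:1} gives $\mathcal{F}_{\boldsymbol{\alpha}}(D_k^{\boldsymbol{\alpha}}f)(\boldsymbol{x})=i\tilde{x}_k\,\mathcal{F}_{\boldsymbol{\alpha}}(f)(\boldsymbol{x})$, so $\mathcal{F}_{\boldsymbol{\alpha}}(u)(\boldsymbol{x})=i(\tilde{x}_1+i\tilde{x}_2)\mathcal{F}_{\boldsymbol{\alpha}}(f)(\boldsymbol{x})$. Dividing (the set $\tilde{x}_1=\tilde{x}_2=0$ is null) yields
\[
\mathcal{F}_{\boldsymbol{\alpha}}(D_k^{\boldsymbol{\alpha}}f)=\frac{\tilde{x}_k}{\tilde{x}_1+i\tilde{x}_2}\,\mathcal{F}_{\boldsymbol{\alpha}}(u)=\frac{\tilde{x}_k(\tilde{x}_1-i\tilde{x}_2)}{|\tilde{\boldsymbol{x}}|^2}\,\mathcal{F}_{\boldsymbol{\alpha}}(u).
\]
Since $m_j^{\boldsymbol{\alpha}}(\boldsymbol{x})=-i\tilde{x}_j/|\tilde{\boldsymbol{x}}|$, we have $\tilde{x}_i\tilde{x}_j/|\tilde{\boldsymbol{x}}|^2=-m_i^{\boldsymbol{\alpha}}m_j^{\boldsymbol{\alpha}}$, so the factor multiplying $\mathcal{F}_{\boldsymbol{\alpha}}(u)$ is a quadratic polynomial in $m_1^{\boldsymbol{\alpha}},m_2^{\boldsymbol{\alpha}}$: explicitly $-(m_1^{\boldsymbol{\alpha}})^2+im_1^{\boldsymbol{\alpha}}m_2^{\boldsymbol{\alpha}}$ for $k=1$ and $-m_1^{\boldsymbol{\alpha}}m_2^{\boldsymbol{\alpha}}+i(m_2^{\boldsymbol{\alpha}})^2$ for $k=2$. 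Applying Theorem~\ref{2th:1} twice and the FRFT inversion theorem (Lemma~\ref{4th:4}), I would obtain the operator identities
\[
D_1^{\boldsymbol{\alpha}}f=-\big(R_1^{\boldsymbol{\alpha}}\big)^2u+iR_1^{\boldsymbol{\alpha}}R_2^{\boldsymbol{\alpha}}u,\qquad D_2^{\boldsymbol{\alpha}}f=-R_1^{\boldsymbol{\alpha}}R_2^{\boldsymbol{\alpha}}u+i\big(R_2^{\boldsymbol{\alpha}}\big)^2u.
\]

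Finally I would invoke $L^p$ boundedness. Because $R_i^{\boldsymbol{\alpha}}R_j^{\boldsymbol{\alpha}}=e_{-\boldsymbol{\alpha}}R_iR_j(e_{\boldsymbol{\alpha}}\,\cdot\,)$, Theorem~\ref{2th:6} applied twice (equivalently, the $L^p$ boundedness of compositions of classical Riesz transforms) gives $\|R_i^{\boldsymbol{\alpha}}R_j^{\boldsymbol{\alpha}}u\|_{L^p}\le C\|u\|_{L^p}$ for $1<p<\infty$; the triangle inequality then produces $\|D_1^{\boldsymbol{\alpha}}f\|_{L^p}+\|D_2^{\boldsymbol{\alpha}}f\|_{L^p}\le C\|u\|_{L^p}$, which is the asserted estimate. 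The one point requiring care is the middle step: since $R_j^{\boldsymbol{\alpha}}u$ need not be Schwartz, the iteration of Theorem~\ref{2th:1} should be justified by first extending its multiplier identity to $L^2(\mathbb{R}^2)$ (the FRFT being, up to the constant $c(\boldsymbol{\alpha})$, an $L^2$ isometry), or, more cleanly, by transferring the elementary classical identity $\partial_1 g=(-R_1^2+iR_1R_2)(\partial_1+i\partial_2)g$ (valid for $g=e_{\boldsymbol{\alpha}}f\in S(\mathbb{R}^2)$ by a Fourier computation) through conjugation by $e_{\pm\boldsymbol{\alpha}}$. Everything else is routine.
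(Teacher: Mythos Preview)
Your proposal is correct and follows essentially the same route as the paper: the paper isolates the identity $D_j^{\boldsymbol{\alpha}}f=-R_j^{\boldsymbol{\alpha}}(R_1^{\boldsymbol{\alpha}}-iR_2^{\boldsymbol{\alpha}})u$ as a separate lemma (proved via the FRFT and Lemma~\ref{4th:1}, exactly as you do) and then applies Theorem~\ref{2th:6}, which is precisely your operator identity written in factored rather than expanded form. Your reading of the second chirp factor as $e_{-\boldsymbol{\alpha}}$ matches the paper's own computation in that lemma, and your extra remark about justifying the iteration of the multiplier identity beyond $S(\mathbb{R}^2)$ is a point the paper leaves implicit.
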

To prove    Theorem \ref{4th:3}, we need the following lemma.
\begin{lemma}\label{4th:5}
Let $f\in S (\mathbb{R}^2)$. We have
\begin{align*}
e_{-\boldsymbol{\alpha}}(\boldsymbol{y})\frac{\partial(e_{\boldsymbol{\alpha}}(\boldsymbol{y})f(\boldsymbol{y}))}{\partial y_j}=-R_j^{\boldsymbol{\alpha}}(R_1^{\boldsymbol{\alpha}}-iR_2^{\boldsymbol{\alpha}})\Bigg(e_{-\boldsymbol{\alpha}}(\boldsymbol{y})\frac{\partial(e_{\boldsymbol{\alpha}}(\boldsymbol{y})f(\boldsymbol{y}))}{\partial y_1}+ie_{\boldsymbol{\alpha}}(\boldsymbol{y})\frac{\partial(e_{\boldsymbol{\alpha}}(\boldsymbol{y})f(\boldsymbol{y}))}{\partial y_2}\Bigg),
\end{align*}
for $j=1,2$ and   $\boldsymbol{\alpha}=(\alpha_1,\alpha_2)\in \mathbb{R}^2$ with $\alpha_k\notin \pi \mathbb{Z}, k=1,2  $.
\end{lemma}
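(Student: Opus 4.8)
The plan is to transfer the identity to the fractional Fourier transform side, where the chirp-conjugated partial derivative $e_{-\boldsymbol{\alpha}}\partial_{y_k}(e_{\boldsymbol{\alpha}}\cdot)$ and each fractional Riesz transform $R_k^{\boldsymbol{\alpha}}$ become multiplication by an explicit symbol, to check that the two sides carry the same symbol, and then to invoke the FRFT inversion theorem. Throughout set $\tilde{x}_k=x_k\csc\alpha_k$ and abbreviate $D_k^{\boldsymbol{\alpha}}f:=e_{-\boldsymbol{\alpha}}(\boldsymbol{y})\,\partial_{y_k}(e_{\boldsymbol{\alpha}}(\boldsymbol{y})f(\boldsymbol{y}))$; since a chirp times a Schwartz function is again Schwartz, $e_{\boldsymbol{\alpha}}f\in S(\mathbb{R}^2)$ and all the formulas below are legitimate. (The two chirp prefactors in the combination on the right-hand side are to be read as $e_{-\boldsymbol{\alpha}}$, so that it equals $D_1^{\boldsymbol{\alpha}}f+iD_2^{\boldsymbol{\alpha}}f=e_{-\boldsymbol{\alpha}}(\partial_1+i\partial_2)(e_{\boldsymbol{\alpha}}f)$.)

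First I would record the symbol of the left-hand side: by the FRFT derivative formula (Lemma \ref{4th:1}), $\mathcal{F}_{\boldsymbol{\alpha}}(D_j^{\boldsymbol{\alpha}}f)(\boldsymbol{x})=i\tilde{x}_j\,\mathcal{F}_{\boldsymbol{\alpha}}(f)(\boldsymbol{x})$, and hence $\mathcal{F}_{\boldsymbol{\alpha}}(D_1^{\boldsymbol{\alpha}}f+iD_2^{\boldsymbol{\alpha}}f)(\boldsymbol{x})=(i\tilde{x}_1-\tilde{x}_2)\,\mathcal{F}_{\boldsymbol{\alpha}}(f)(\boldsymbol{x})$. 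Next I would push this through the two fractional Riesz transforms using Theorem \ref{2th:1}: the operator $R_1^{\boldsymbol{\alpha}}-iR_2^{\boldsymbol{\alpha}}$ acts on the FRFT side as multiplication by $-i\tilde{x}_1/|\tilde{\boldsymbol{x}}|-\tilde{x}_2/|\tilde{\boldsymbol{x}}|=-(i\tilde{x}_1+\tilde{x}_2)/|\tilde{\boldsymbol{x}}|$, so the symbol becomes $-(i\tilde{x}_1+\tilde{x}_2)(i\tilde{x}_1-\tilde{x}_2)/|\tilde{\boldsymbol{x}}|$ times $\mathcal{F}_{\boldsymbol{\alpha}}(f)(\boldsymbol{x})$. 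The decisive cancellation is $(i\tilde{x}_1+\tilde{x}_2)(i\tilde{x}_1-\tilde{x}_2)=-(\tilde{x}_1^2+\tilde{x}_2^2)=-|\tilde{\boldsymbol{x}}|^2$, which reduces the symbol to $|\tilde{\boldsymbol{x}}|\,\mathcal{F}_{\boldsymbol{\alpha}}(f)(\boldsymbol{x})$; applying the remaining factor $-R_j^{\boldsymbol{\alpha}}$, with symbol $i\tilde{x}_j/|\tilde{\boldsymbol{x}}|$, yields $i\tilde{x}_j\,\mathcal{F}_{\boldsymbol{\alpha}}(f)(\boldsymbol{x})$, which is precisely the symbol of $D_j^{\boldsymbol{\alpha}}f$ for each $j$. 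By the FRFT inversion theorem (Lemma \ref{4th:4}) the two functions coincide.

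The one point that needs care is the iteration of the FRFT-multiplier descriptions, because Theorem \ref{2th:1} is phrased for Schwartz inputs whereas neither $(\partial_1+i\partial_2)(e_{\boldsymbol{\alpha}}f)$ nor its image under $R_1^{\boldsymbol{\alpha}}-iR_2^{\boldsymbol{\alpha}}$ need be Schwartz. I would bypass this by first noting, straight from the definition, the conjugation identity $R_k^{\boldsymbol{\alpha}}h=e_{-\boldsymbol{\alpha}}\,R_k(e_{\boldsymbol{\alpha}}h)$, so that the whole right-hand side equals $-e_{-\boldsymbol{\alpha}}\,R_j(R_1-iR_2)((\partial_1+i\partial_2)(e_{\boldsymbol{\alpha}}f))$ with $g:=e_{\boldsymbol{\alpha}}f\in S(\mathbb{R}^2)$, reducing the claim to the classical plane identity $\partial_j g=-R_j(R_1-iR_2)(\partial_1+i\partial_2)g$. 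This last identity is verified with the ordinary Fourier transform, where composing multipliers on $L^2$ (or on tempered distributions) is unproblematic: the composite $R_j(R_1-iR_2)(\partial_1+i\partial_2)$ has multiplier $(-i\xi_j/|\xi|)\cdot(-(i\xi_1+\xi_2)/|\xi|)\cdot(i\xi_1-\xi_2)=(-i\xi_j/|\xi|)\cdot(|\xi|^2/|\xi|)=-i\xi_j$, the negative of the multiplier of $\partial_j$. Multiplying back by $e_{-\boldsymbol{\alpha}}$ recovers $D_j^{\boldsymbol{\alpha}}f=e_{-\boldsymbol{\alpha}}\partial_j(e_{\boldsymbol{\alpha}}f)$ and finishes the proof.
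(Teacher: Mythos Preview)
Your proof is correct and follows essentially the same route as the paper: take the FRFT of both sides, use Theorem~\ref{2th:1} and Lemma~\ref{4th:1} to turn the fractional Riesz transforms and the chirp-conjugated derivatives into explicit multipliers, check the algebraic identity of symbols $i\tilde{x}_j$, and conclude via the FRFT inversion theorem (Lemma~\ref{4th:4}). Your observation that the second chirp prefactor should be $e_{-\boldsymbol{\alpha}}$ rather than $e_{\boldsymbol{\alpha}}$ is right, and your added paragraph handling the iteration of multipliers (by conjugating back to the classical Riesz transforms and working on $L^2$) cleanly addresses a technical point the paper passes over in silence.
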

\begin{proof}Taking the FRFT of the above identity,  we have
\begin{align*}
&\mathcal{F}_{\boldsymbol{\alpha}}\left(-R_j^{\boldsymbol{\alpha}}(R_1^{\boldsymbol{\alpha}}-iR_2^{\boldsymbol{\alpha}})\left(e_{-\boldsymbol{\alpha}}(\boldsymbol{y})\frac{\partial(e_{\boldsymbol{\alpha}}(\boldsymbol{y})f(\boldsymbol{y}))}{\partial y_1}+ie_{\boldsymbol{\alpha}}(\boldsymbol{y})\frac{\partial(e_{\boldsymbol{\alpha}}(\boldsymbol{y})f(\boldsymbol{y}))}{\partial y_2}\right)\right)(x)\\
=& \ -\frac{i\tilde{x}_j}{|\tilde{\boldsymbol{x}}|}\left(\frac{i\tilde{\boldsymbol{x}}_1}{|\tilde{\boldsymbol{x}}|}+\frac{i\tilde{\boldsymbol{x}}_2}{|\tilde{\boldsymbol{x}}|}\right)(ix_1\csc\alpha_1\mathcal{F}_{\boldsymbol{\alpha}} (f)(\boldsymbol{x})-x_2\csc\alpha_2\mathcal{F}_{\boldsymbol{\alpha}} (f)(\boldsymbol{x}))\\
=&\ -\frac{i\tilde{x}_j}{|\tilde{\boldsymbol{x}}|}\left(\frac{-|\tilde{x}_1|^2}{|\tilde{\boldsymbol{x}} |}+\frac{-|\tilde{x}_2|^2}{|\tilde{\boldsymbol{x}} |}\right)\mathcal{F}_{\boldsymbol{\alpha}} (f)(\boldsymbol{x})\\
=&\ ix_j\csc\alpha_j\mathcal{F}_{\boldsymbol{\alpha}} (f)(\boldsymbol{x}).
\end{align*}
By Lemma \ref{4th:1}, we have
\begin{align*}
\mathcal{F}_{\boldsymbol{\alpha}} \left(e_{-\boldsymbol{\alpha}}(\boldsymbol{y})\frac{\partial(e_{\boldsymbol{\alpha}}(\boldsymbol{y})f(\boldsymbol{y}))}{\partial y_j}\right)(\boldsymbol{x})=ix_j\csc\alpha_j\mathcal{F}_{\boldsymbol{\alpha}} (f)(\boldsymbol{x}).
\end{align*}
Applying the inverse FRFT on the  above identity  we deduce the desired result.
\end{proof}
Now, we  return to prove   Theorem \ref{4th:3}.
\begin{proof}By   Lemma \ref{4th:1} and Theorem \ref{2th:6}, we have
\begin{align*}
\left\|\frac{\partial(e_{\boldsymbol{\alpha}} f)}{\partial y_j}\right\|_{L^p}
=&\ \Bigg\|-R_j^{\boldsymbol{\alpha}}(R_1^{\boldsymbol{\alpha}}-iR_2^{\boldsymbol{\alpha}})\Bigg(e_{-\boldsymbol{\alpha}}(\boldsymbol{y})\frac{\partial(e_{\boldsymbol{\alpha}}(\boldsymbol{y})f(\boldsymbol{y}))}{\partial y_1} +ie_{\boldsymbol{\alpha}}(\boldsymbol{y})\frac{\partial(e_{\boldsymbol{\alpha}}(\boldsymbol{y})f(\boldsymbol{y}))}{\partial y_2}\Bigg)\Bigg\|_{L^p}\\
\leq&\ C\left\|e_{-\boldsymbol{\alpha}}(\boldsymbol{y})\frac{\partial(e_{\boldsymbol{\alpha}}(\boldsymbol{y})f(\boldsymbol{y}))}{\partial y_1} +ie_{\boldsymbol{\alpha}}(\boldsymbol{y})\frac{\partial(e_{\boldsymbol{\alpha}}(\boldsymbol{y})f(\boldsymbol{y}))}{\partial y_2}\right\|_{L^p},
\end{align*}
which completes the proof of the theorem.
\end{proof}
\begin{remark}
When $\boldsymbol{\alpha}=(\frac{\pi}{2}+k_1\pi,\frac{\pi}{2}+k_2\pi,\dots, \frac{\pi}{2}+k_n\pi), k_j\in \mathbb{Z} $ for $j=1,2,\dots,m$, Theorem \ref{4th:3}   simplifies to Proposition $4$   in \cite[pp.$60$]{s}. \end{remark}

\begin{lemma}\label{4th:6}
For $f\in S (\mathbb{R}^n)$ and $1\leq j,k\leq n $,  we have
$$ e_{ -\boldsymbol{\alpha}}(\boldsymbol{y})\frac{\partial^2 (e_{ \boldsymbol{\alpha}}(\boldsymbol{y})f(\boldsymbol{y}))}{\partial y_ky_j}=\left(-R^{\boldsymbol{\alpha}}_kR^{\boldsymbol{\alpha}}_je_{ -{\boldsymbol {\alpha}}}\triangle(e_{ \boldsymbol{\alpha}}f)\right)(\boldsymbol{y}),$$
 for all $\boldsymbol{y}\in \mathbb{R}^n$, where $\boldsymbol{\alpha}=(\alpha_1,\alpha_2,\dots,\alpha_n)\in \mathbb{R}^n$ with $\alpha_k\notin \pi \mathbb{Z}$.\end{lemma}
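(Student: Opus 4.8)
The plan is to prove the identity by passing to the fractional Fourier transform side and then invoking the FRFT inversion theorem (Lemma~\ref{4th:4}). First I would dispose of the domain issues: since $|e_{\pm\boldsymbol\alpha}|\equiv1$ and every partial derivative of $e_{\boldsymbol\alpha}$ equals $e_{\boldsymbol\alpha}$ times a polynomial, multiplication by $e_{\boldsymbol\alpha}$ and by $e_{-\boldsymbol\alpha}$ maps $S(\mathbb R^n)$ into itself. Hence for $f\in S(\mathbb R^n)$ the functions $e_{\boldsymbol\alpha}f$, $\partial_{y_j}(e_{\boldsymbol\alpha}f)$, $\triangle(e_{\boldsymbol\alpha}f)$, $e_{-\boldsymbol\alpha}\triangle(e_{\boldsymbol\alpha}f)$ and $h:=e_{-\boldsymbol\alpha}\partial_{y_j}(e_{\boldsymbol\alpha}f)$ all lie in $S(\mathbb R^n)$, so Lemma~\ref{4th:1}, Theorem~\ref{2th:1} and Lemma~\ref{4th:4} are all applicable, and the absolute-continuity hypotheses needed hold trivially because everything in sight is smooth.

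Next I would compute $\mathcal F_{\boldsymbol\alpha}$ of the left-hand side by iterating Lemma~\ref{4th:1}. With $h=e_{-\boldsymbol\alpha}\partial_{y_j}(e_{\boldsymbol\alpha}f)$ one has $e_{\boldsymbol\alpha}h=\partial_{y_j}(e_{\boldsymbol\alpha}f)$, hence $e_{-\boldsymbol\alpha}\,\partial_{y_k}(e_{\boldsymbol\alpha}h)=e_{-\boldsymbol\alpha}\,\frac{\partial^2(e_{\boldsymbol\alpha}f)}{\partial y_k\,\partial y_j}$. Applying Lemma~\ref{4th:1} first to $h$ in the $k$th variable and then to $f$ in the $j$th variable, and writing $\tilde x_l=x_l\csc\alpha_l$, one gets
\[
\mathcal F_{\boldsymbol\alpha}\!\Big(e_{-\boldsymbol\alpha}(\boldsymbol y)\tfrac{\partial^2(e_{\boldsymbol\alpha}(\boldsymbol y)f(\boldsymbol y))}{\partial y_k\,\partial y_j}\Big)(\boldsymbol x)= i\tilde x_k\,\mathcal F_{\boldsymbol\alpha}(h)(\boldsymbol x)= (i\tilde x_k)(i\tilde x_j)\,\mathcal F_{\boldsymbol\alpha}(f)(\boldsymbol x)= -\tilde x_k\tilde x_j\,\mathcal F_{\boldsymbol\alpha}(f)(\boldsymbol x).
\]
For the right-hand side I would first treat $e_{-\boldsymbol\alpha}\triangle(e_{\boldsymbol\alpha}f)=\sum_{l=1}^n e_{-\boldsymbol\alpha}\partial^2_{y_l}(e_{\boldsymbol\alpha}f)$ by the same double use of Lemma~\ref{4th:1} term by term, obtaining $\mathcal F_{\boldsymbol\alpha}\big(e_{-\boldsymbol\alpha}\triangle(e_{\boldsymbol\alpha}f)\big)(\boldsymbol x)=\sum_{l=1}^n(i\tilde x_l)^2\,\mathcal F_{\boldsymbol\alpha}(f)(\boldsymbol x)=-|\tilde{\boldsymbol x}|^2\,\mathcal F_{\boldsymbol\alpha}(f)(\boldsymbol x)$, and then I would apply Theorem~\ref{2th:1} twice, once for $R_j^{\boldsymbol\alpha}$ (multiplier $-i\tilde x_j/|\tilde{\boldsymbol x}|$) and once for $R_k^{\boldsymbol\alpha}$ (multiplier $-i\tilde x_k/|\tilde{\boldsymbol x}|$), to get
\[
\mathcal F_{\boldsymbol\alpha}\big(-R_k^{\boldsymbol\alpha}R_j^{\boldsymbol\alpha}e_{-\boldsymbol\alpha}\triangle(e_{\boldsymbol\alpha}f)\big)(\boldsymbol x)= -\Big(\tfrac{-i\tilde x_k}{|\tilde{\boldsymbol x}|}\Big)\Big(\tfrac{-i\tilde x_j}{|\tilde{\boldsymbol x}|}\Big)\big(-|\tilde{\boldsymbol x}|^2\big)\,\mathcal F_{\boldsymbol\alpha}(f)(\boldsymbol x)= -\tilde x_k\tilde x_j\,\mathcal F_{\boldsymbol\alpha}(f)(\boldsymbol x).
\]
Since the two FRFTs coincide, Lemma~\ref{4th:4} gives the asserted identity almost everywhere, and hence everywhere by continuity of both sides.

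I expect the only genuinely delicate point to be the bookkeeping in the first step: one must verify that each intermediate function actually lies in $S(\mathbb R^n)$ (or at least in $L^1$ with the stated absolute continuity) so that Lemma~\ref{4th:1} and Theorem~\ref{2th:1} legitimately apply. This is exactly where the chirp structure is used — through $|e_{\pm\boldsymbol\alpha}|=1$ and the fact that differentiating $e_{\boldsymbol\alpha}$ only produces polynomial factors — which is what lets one carry the differentiation and singular-integral identities back and forth across multiplication by $e_{\pm\boldsymbol\alpha}$ without destroying Schwartz decay. Everything else is simply the computation that proves the classical identity $\partial_k\partial_j g=-R_kR_j\triangle g$, now read off on the fractional Fourier side.
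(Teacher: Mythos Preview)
Your proposal is correct and follows essentially the same route as the paper's proof: apply $\mathcal F_{\boldsymbol\alpha}$, iterate Lemma~\ref{4th:1} to turn the second-order derivative into the factor $(i\tilde x_k)(i\tilde x_j)$, recognize $-|\tilde{\boldsymbol x}|^2\mathcal F_{\boldsymbol\alpha}(f)$ as the FRFT of $e_{-\boldsymbol\alpha}\triangle(e_{\boldsymbol\alpha}f)$, use Theorem~\ref{2th:1} for the two fractional Riesz multipliers, and then invert. The only difference is cosmetic (the paper runs the computation as a single chain rather than computing the two sides separately), and your explicit verification that the intermediate functions stay in $S(\mathbb R^n)$ is a welcome addition that the paper leaves implicit.
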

\begin{proof}   Taking the FRFT of the above identity,  we have
\begin{align*}
\mathcal{F}_{\boldsymbol {\alpha}} \left(e_{ -\boldsymbol {\alpha}}(\boldsymbol {y})\frac{\partial^2 \left(e_{ \boldsymbol {\alpha}}(\boldsymbol {y})f(\boldsymbol {y})\right)}{\partial y_k\partial y_j}\right)(\boldsymbol {x})
=&\ \mathcal{F}_{\boldsymbol {\alpha}} \left(e_{ -\boldsymbol {\alpha}}(\boldsymbol {y})\frac{\partial\left(e_{ \boldsymbol {\alpha}}(\boldsymbol {y})\frac{e_{ -\boldsymbol {\alpha}}(\boldsymbol {y})\partial(e_{\boldsymbol { \alpha}}(y)f(y))}{\partial y_j}\right)}{\partial y_k}\right)(\boldsymbol {x})\\
=&\ ix_k\csc\alpha_k  \mathcal{F}_{\boldsymbol {\alpha}}\left(e_{-{\boldsymbol {\alpha}}}(\boldsymbol {y})\frac{\partial[e_{ \boldsymbol {\alpha}}(\boldsymbol {y})f(\boldsymbol {y})]}{\partial y_j}\right)(\boldsymbol {x})\\
=&\ ix_k\csc\alpha_k  ix_j\csc\alpha_j\mathcal{F}_{\boldsymbol {\alpha}}(f)(\boldsymbol {x})\\
=&\ -\left(\frac{i\tilde{x}_k}{|\tilde{\boldsymbol {x}}|}\right)\left(\frac{i\tilde{x}_j}{|\tilde{\boldsymbol {x}}|}\right)(-|\tilde{\boldsymbol {x}}|^2)\mathcal{F}_{\boldsymbol {\alpha}}(f)(\boldsymbol {x})\\
=&\ -\left(\frac{i\tilde{x}_k}{|\tilde{\boldsymbol {x}}|}\right)\left(\frac{i\tilde{x}_j}{|\tilde{\boldsymbol {x}}|}\right)\mathcal{F}_\alpha\Bigg(e_{ \boldsymbol {-\alpha}}(\boldsymbol {y})\frac{\partial^2(e_{ \boldsymbol { \alpha}}(\boldsymbol {y})f(\boldsymbol {y}))}{\partial^2 y_1}+\cdots\\
&\ \ \ \ \ \ \ \ \ \ \ \ \ \ \ \ \ \ \ \ \ \ \ \ \ \ \ \ +e_{ \boldsymbol {-\alpha}}(\boldsymbol {y})\frac{\partial^2(e_{ \boldsymbol { \alpha}}(\boldsymbol {y})f(\boldsymbol {y}))}{\partial ^2y_n}\Bigg)(\boldsymbol {x})\\
=\ &\mathcal{F}_{\boldsymbol {\alpha}}(-R^{\boldsymbol {\alpha}}_kR^{\boldsymbol {\alpha}}_je_{ \boldsymbol {-\alpha}}\triangle(e_{ \boldsymbol {\alpha}}f))(\boldsymbol {x}).
\end{align*}
Applying the inverse FRFT on the  above identity, we obtain the desired result.
\end{proof}
\begin{remark}
When $\boldsymbol{\alpha}=(\frac{\pi}{2}+k_1\pi,\frac{\pi}{2}+k_2\pi,\dots, \frac{\pi}{2}+k_n\pi), k_j\in \mathbb{Z} $ for $j=1,2,\dots,m$, Lemma \ref{4th:6} can be simplified to Proposition $5.1.17$   in \cite{g1}. \end{remark}
\begin{theorem}\label{4th:8}
Suppose $f\in S (R^n)$ and $\Delta(e_{\boldsymbol{\alpha}} f)=\sum^n_{j=1}\frac{\partial^2(e_{\boldsymbol{\alpha}} f)}{\partial y_k\partial y_j}$. Then we have  a priori bound
$$\left\|\frac{\partial^2(e_{\boldsymbol{\alpha}} f)}{\partial y_k\partial y_j}\right\|_{L^p}\leq C \Big\|\Delta(e_{\boldsymbol{\alpha}} f)\Big\|_{L^p},$$
for $\boldsymbol{\alpha}=(\alpha_1,\alpha_2,\dots,\alpha_n)\in \mathbb{R}^n$ with $\alpha_k\notin \pi \mathbb{Z}, \ k=1,2,\dots,n.$\end{theorem}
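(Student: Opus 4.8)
The plan is to reduce everything to the operator identity of Lemma~\ref{4th:6} combined with the $L^p$ boundedness of the fractional Riesz transforms from Theorem~\ref{2th:6}. The key preliminary remark is that the chirp $e_{\pm\boldsymbol{\alpha}}$ is unimodular, so multiplication by it is an isometry on $L^p(\mathbb{R}^n)$; in particular
$$
\left\|\frac{\partial^2(e_{\boldsymbol{\alpha}} f)}{\partial y_k\partial y_j}\right\|_{L^p}
=\left\|e_{-\boldsymbol{\alpha}}(\boldsymbol{y})\frac{\partial^2(e_{\boldsymbol{\alpha}}(\boldsymbol{y})f(\boldsymbol{y}))}{\partial y_k\partial y_j}\right\|_{L^p}.
$$

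Before applying Lemma~\ref{4th:6} I would record that $e_{\boldsymbol{\alpha}} f\in S(\mathbb{R}^n)$ whenever $f\in S(\mathbb{R}^n)$: the chirp is smooth and unimodular and all of its derivatives grow only polynomially, so its product with a Schwartz function is again Schwartz. Consequently $\Delta(e_{\boldsymbol{\alpha}} f)\in S(\mathbb{R}^n)$ and $e_{-\boldsymbol{\alpha}}\,\Delta(e_{\boldsymbol{\alpha}} f)\in S(\mathbb{R}^n)$, so Lemma~\ref{4th:6} and Theorem~\ref{2th:6} may both be legitimately applied to these functions. Invoking Lemma~\ref{4th:6} then rewrites the right-hand side above as
$$
\left\|R_k^{\boldsymbol{\alpha}}R_j^{\boldsymbol{\alpha}}\left(e_{-\boldsymbol{\alpha}}\,\Delta(e_{\boldsymbol{\alpha}} f)\right)\right\|_{L^p},
$$
the sign being irrelevant inside a norm.

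Finally I would apply Theorem~\ref{2th:6} twice, once for $R_j^{\boldsymbol{\alpha}}$ and once for $R_k^{\boldsymbol{\alpha}}$ (this is where the restriction $1<p<\infty$ enters), to bound the last display by $C\,\|e_{-\boldsymbol{\alpha}}\,\Delta(e_{\boldsymbol{\alpha}} f)\|_{L^p}$, and then use the unimodularity of $e_{-\boldsymbol{\alpha}}$ once more to replace this by $C\,\|\Delta(e_{\boldsymbol{\alpha}} f)\|_{L^p}$, which is exactly the asserted a priori bound. There is no substantive obstacle: all the analytic content is already contained in Lemma~\ref{4th:6} and in the $L^p$ theory of $R_j^{\boldsymbol{\alpha}}$ developed earlier, so the only points demanding attention are the Schwartz-class bookkeeping noted above and keeping the factors $e_{\boldsymbol{\alpha}}$ and $e_{-\boldsymbol{\alpha}}$ correctly paired, so that every stray chirp is unimodular and may be dropped inside an $L^p$ norm.
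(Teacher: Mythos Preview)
Your proposal is correct and follows exactly the same approach as the paper: insert the unimodular chirp $e_{-\boldsymbol{\alpha}}$, invoke Lemma~\ref{4th:6} to rewrite the mixed partial as $-R_k^{\boldsymbol{\alpha}}R_j^{\boldsymbol{\alpha}}\bigl(e_{-\boldsymbol{\alpha}}\Delta(e_{\boldsymbol{\alpha}}f)\bigr)$, and then apply Theorem~\ref{2th:6} together with unimodularity once more. Your version is slightly more explicit about the Schwartz-class bookkeeping and the role of $|e_{\pm\boldsymbol{\alpha}}|=1$, but the argument is otherwise identical.
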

\begin{proof}According to   Lemma \ref{4th:6} and   Theorem \ref{2th:6}, we  obtain that
\begin{align*}
\left\|\frac{\partial^2(e_{\boldsymbol{\alpha}} f)}{\partial y_k\partial y_j}\right\|_{L^p}=&\  \left\|e_{-\boldsymbol{\alpha}}\frac{\partial^2(e_{\boldsymbol{\alpha}} f)}{\partial y_k\partial y_j}\right\|_{L^p}\\
 =&\ \left\|-R_j^{\boldsymbol{\alpha}} R_k^{\boldsymbol{\alpha}} e_{-\boldsymbol{\alpha}}\triangle(e_{\boldsymbol{\alpha}} f)\right\|_{L^p}\\
 \leq &\ C\|\triangle(e_{\boldsymbol{\alpha}} f)\|_{L^p},
\end{align*}
which completes the proof of the theorem.
\end{proof}

\subsection{A characterization of Laplace's equation }

\ \ \ \  Next we give a characterization of Laplace's equation.
 \begin{example}Suppose that  $u\in S '(\mathbb{R}^n)$   and   $f\in L^2(\mathbb{R}^n)$.
  We solve   Laplace's equation 
  $$\Delta(e_{\boldsymbol{\alpha}} u)=e_{{\boldsymbol{\alpha}}} f. \eqno(4.1)\label{4.1}$$
 \end{example}
One of the methods can be found in  \cite{cfgw}. In this paper, we express all second-order derivatives of $u$ in terms of the fractional Riesz transform of $f$. In order to accomplish  this we provide  a necessary lemma.

\begin{lemma}\label{4th:7}\emph{(\cite{g1})}
Suppose that $u\in S '(\mathbb{R}^n)$. If $\hat{u}$ is supported  at  $\{0\}$, then $u$ is a polynomial.
\end{lemma}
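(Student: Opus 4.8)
The plan is to combine the classical structure theorem for distributions supported at a single point with the fact that, under the Fourier transform, multiplication by a monomial corresponds to differentiation of the Dirac mass. Since $\hat{u}$ is a tempered distribution with compact support (its support is $\{0\}$), it has finite order: there exist $N\in\mathbb{Z}^+$ and $C>0$ such that
$$|\langle \hat{u},\psi\rangle|\le C\sum_{|\gamma|\le N}\sup_{|\boldsymbol{x}|\le 1}|\partial^\gamma\psi(\boldsymbol{x})|$$
for all $\psi\in C_c^\infty(\mathbb{R}^n)$ supported in the closed unit ball.

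The key step is to show that $\hat{u}$ annihilates every $\psi\in S(\mathbb{R}^n)$ all of whose derivatives of order $\le N$ vanish at the origin. Fix such a $\psi$, choose a cutoff $\chi\in C_c^\infty(\mathbb{R}^n)$ with $\chi\equiv 1$ near $\boldsymbol{0}$, and for $\varepsilon>0$ set $\psi_\varepsilon(\boldsymbol{x})=\chi(\boldsymbol{x}/\varepsilon)\psi(\boldsymbol{x})$. Because $\hat{u}$ is supported at $\boldsymbol{0}$ and $\psi_\varepsilon=\psi$ near $\boldsymbol{0}$, we have $\langle\hat{u},\psi\rangle=\langle\hat{u},\psi_\varepsilon\rangle$ for all $\varepsilon>0$. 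Taylor's theorem together with the vanishing of the low-order derivatives of $\psi$ at the origin gives $|\partial^\gamma\psi(\boldsymbol{x})|\le C_\psi|\boldsymbol{x}|^{N+1-|\gamma|}$ for $\boldsymbol{x}$ near $\boldsymbol{0}$; combining this with the Leibniz rule applied to $\chi(\cdot/\varepsilon)\psi$ yields $\sum_{|\gamma|\le N}\sup|\partial^\gamma\psi_\varepsilon|\le C\varepsilon$. Letting $\varepsilon\to 0$ in the finite-order estimate forces $\langle\hat{u},\psi\rangle=0$.

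It follows that $\langle\hat{u},\psi\rangle$ depends only on the finite jet $\bigl(\partial^\gamma\psi(\boldsymbol{0})\bigr)_{|\gamma|\le N}$, so $\hat{u}$ is a linear functional on that finite-dimensional space; expanding in the basis dual to $\psi\mapsto(-1)^{|\gamma|}\partial^\gamma\psi(\boldsymbol{0})$ gives constants $c_\gamma$ with $\hat{u}=\sum_{|\gamma|\le N}c_\gamma\,\partial^\gamma\delta_{\boldsymbol{0}}$. Finally, applying the Fourier inversion formula and using that $\mathcal{F}(\boldsymbol{x}^\gamma)$ is a nonzero constant multiple of $\partial^\gamma\delta_{\boldsymbol{0}}$ (with the normalization of $\mathcal{F}$ fixed in the introduction), we invert term by term to conclude $u=\sum_{|\gamma|\le N}c_\gamma'\,\boldsymbol{x}^\gamma$, a polynomial of degree at most $N$.

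\textbf{Main obstacle.} The substantive point is the structure-theorem step, i.e.\ verifying that $\hat{u}$ kills every Schwartz function that is flat to order $N$ at the origin; this rests on the scaling estimate $\sum_{|\gamma|\le N}\sup|\partial^\gamma(\chi(\cdot/\varepsilon)\psi)|=O(\varepsilon)$, which must be carried out with some care because differentiating $\chi(\boldsymbol{x}/\varepsilon)$ produces factors of $\varepsilon^{-|\gamma|}$ that are exactly compensated by the vanishing order of $\psi$. Everything after that is routine bookkeeping with Fourier-transform normalizations.
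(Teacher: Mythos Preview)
Your argument is correct and is the standard proof of this classical fact. Note, however, that the paper does not supply its own proof of this lemma: it is simply quoted from \cite{g1} (Grafakos, \emph{Classical Fourier Analysis}) and used as a black box in the discussion of Laplace's equation. What you have written is essentially the textbook proof one finds there, so there is no discrepancy to report.
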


 To solve equation (\ref{4.1}), we first show that  the tempered distribution $$\mathcal{F}_{\boldsymbol{\alpha}}(e_{-\boldsymbol{\alpha}}\partial_j\partial_k(e_{\boldsymbol{\alpha}}u)+R_j^{\boldsymbol{\alpha}} R_k^{\boldsymbol{\alpha}} f) $$ is supported at $\{0\}$. From      Lemma \ref{4th:7}, we can obtain that $$e_{ -\boldsymbol{\alpha}}\partial_j\partial_k(e_{ \boldsymbol{\alpha}}u)=-R_j^{\boldsymbol{\alpha}} R_k^{\boldsymbol{\alpha}} f+e_{ -\boldsymbol{\alpha}}P,$$
where $P$ is a polynomial of $n$ variables (that depends on $j$ and $k$). Then we   provide a method of expressing mixed partial derivatives of $e_{ \alpha}u$ in terms of the fractional Riesz transform of $f$.

 To prove that  the tempered distribution $\mathcal{F}_\alpha(e_{ -\alpha}\partial_j\partial_k(e_{ \alpha}u)+R_j^\alpha R_k^\alpha f) $ is supported at $\{0\}$,   we pick $\gamma\in S (\mathbb{R}^n)$ whose support does not contain the origin.  Then, $\gamma$ vanishes in a neighborhood of zero.  Fix $\eta \in C^\infty$,  which is equal to 1 on the support of $\gamma$ and vanishes in a smaller neighborhood of zero.  Define
$$\zeta(\boldsymbol{\xi})=-\eta(\boldsymbol{\xi})\left(-\frac{i\tilde{\xi}_j}{|\tilde{\boldsymbol{\xi}}|}\right)\left(-\frac{i\tilde{\xi}_k}{|\tilde{\boldsymbol{\xi}}|}\right)$$
and we notice that $\zeta$  and  all of  its derivatives are both  bounded $C^\infty$  functions. Additionally
$$\eta(\boldsymbol{\xi})(i\tilde{\xi}_j)(i\tilde{\xi}_k)=\zeta(\boldsymbol{\xi})(-|\tilde{\boldsymbol{\xi}}|^2).$$

Taking the FRFT of both side of (\ref{4.1}) we   obtain that
$$\mathcal{F}_{\boldsymbol{\alpha}}(e_{-\boldsymbol{\alpha}} \Delta(e_{\boldsymbol{ \alpha}} u))(\boldsymbol{\xi})=-|\tilde{\boldsymbol{\xi}}|^2\mathcal{F}_{\boldsymbol{\alpha}}(u)(\boldsymbol{\xi})=\mathcal{F}_{\boldsymbol{\alpha}}(f)(\boldsymbol{\xi}).$$
Multiplying by $\zeta$, we write 
$$\zeta(\boldsymbol{\xi})\mathcal{F}_{\boldsymbol{\alpha}}(e_{-\boldsymbol{\alpha}}\Delta(e_{ \boldsymbol{\alpha}} u))(\boldsymbol{\xi})=-\zeta(\boldsymbol{\xi})|\tilde{\boldsymbol{\xi}}|^2\mathcal{F}_{\boldsymbol{\alpha}}(u)(\boldsymbol{\xi})=\zeta(\boldsymbol{\xi})\mathcal{F}_\alpha(f)(\boldsymbol{\xi}).$$
Since for all $1\leq j,k \leq n$,
\begin{align*}
\langle \mathcal{F}_{\boldsymbol{\alpha}}(e_{ -\boldsymbol{\alpha}}\partial_j\partial_k(e_{ \boldsymbol{\alpha}}u)),\gamma\rangle=&\ \langle(i\tilde{\xi}_j)(i\tilde{\xi}_k)\mathcal{F}_{\boldsymbol{\alpha}}(u),\gamma\rangle\\
=&\ \langle(i\tilde{\xi}_j)(i\tilde{\xi}_k)\mathcal{F}_\alpha(u),\eta\gamma\rangle\\
=&\ \langle\eta(\boldsymbol{\xi})(i\tilde{\xi}_j)(i\tilde{\xi}_k)\mathcal{F}_{\boldsymbol{\alpha}}(u),\gamma\rangle\\
=&\ \langle\zeta(\boldsymbol{\xi})(-|\tilde{\boldsymbol{\xi}}|^2)\mathcal{F}_{\boldsymbol{\alpha}}(u),\gamma\rangle\\
=&\ \langle\zeta(\boldsymbol{\xi})\mathcal{F}_{\boldsymbol{\alpha}}(f),\gamma\rangle\\
=&\ \left\langle-\eta(\boldsymbol{\xi})\left(- i\tilde{\xi}_j/|\tilde{\boldsymbol{\xi}}| \right)\left(- i\tilde{\xi}_k/|\tilde{\boldsymbol{\xi}}|\right) \mathcal{F}_{\boldsymbol{\alpha}},\gamma\right\rangle\\
=&\ \langle-\eta(\boldsymbol{\xi})  \mathcal{F}_{\boldsymbol{\alpha}}(R_j^{\boldsymbol{\alpha}} R_k^{\boldsymbol{\alpha}} (f)),\gamma\rangle\\
=&\ -\langle  \mathcal{F}_{\boldsymbol{\alpha}}(R_j^{\boldsymbol{\alpha}} R_k^{\boldsymbol{\alpha}} (f)),\eta \gamma\rangle\\
=&\ -\langle  \mathcal{F}_{\boldsymbol{\alpha}}(R_j^{\boldsymbol{\alpha}} R_k^{\boldsymbol{\alpha}} (f)), \gamma\rangle,
\end{align*}
and since  the support   of $\gamma\in S (\mathbb{R}^n)$ does not contain  the  origin, 
it follows that the function 
$\mathcal{F}_{\boldsymbol{\alpha}}(e_{-\boldsymbol{\alpha}}\partial_j\partial_k(e_{\boldsymbol{\alpha}}u)+R_j^{\boldsymbol{\alpha}}R_k^{\boldsymbol{\alpha}}( f))$
is supported at $\{0\}$.

\section{Numerical simulation of  fractional multipliers}

\label{sect:ex}
\ \ \ \ In this section, we apply the fractional Riesz transform to an image with the help of the FRFT discrete
algorithm (\cite{bm1,bm2,tlx}).

\begin{figure}[H]
	\centering
		\subfigure[]{\includegraphics[width=0.4\linewidth]{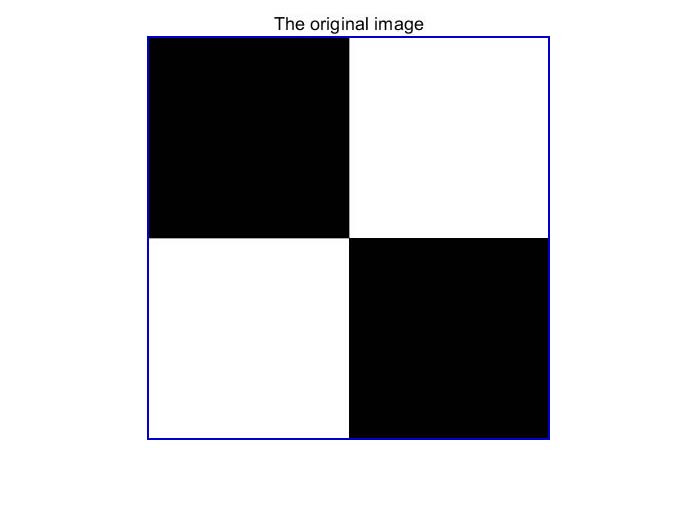} }
		\subfigure[]{\includegraphics[width=0.4\linewidth]{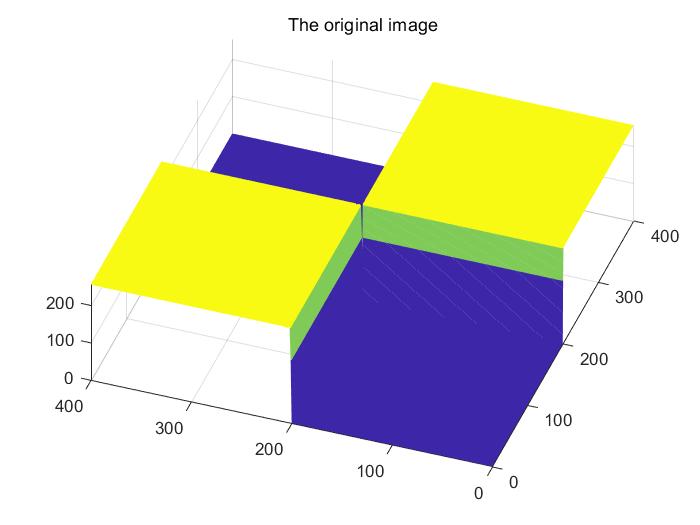} }
		\subfigure[]{\includegraphics[width=0.4\linewidth]{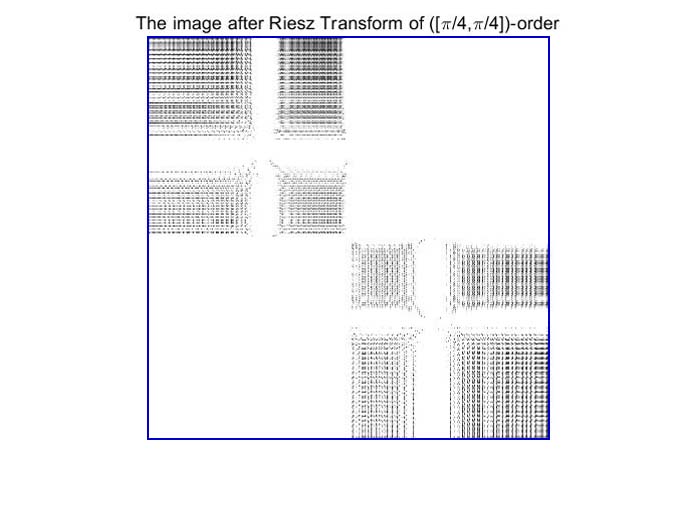} }
		\subfigure[]{\includegraphics[width=0.4\linewidth]{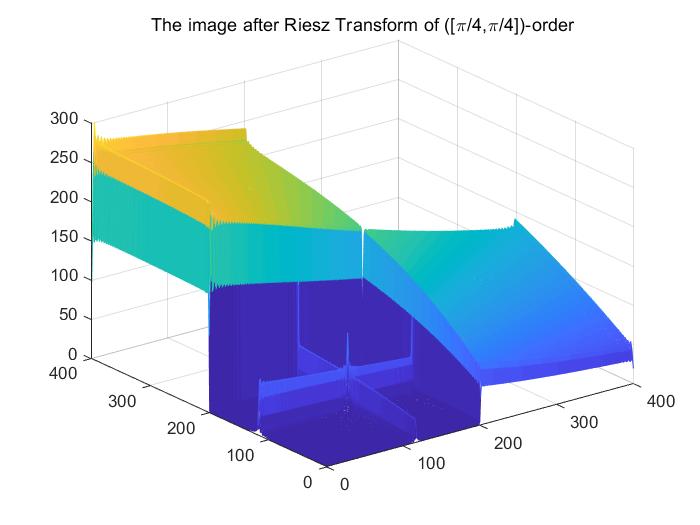} }
    \caption{Fractional Riesz transform of order $(\pi/4, \pi/4)$ of a image.  }
    \label{fig:image}%
\end{figure}

As shown in Fig.
\ref{fig:image}, (a) is the original 2-dimensional   grayscale image with 400 pixels $\times$ 400 pixels; (c) is
the 2-dimensional   grayscale image after the  Riesz transform of order $(\pi/4,\pi/4)$.
In the continuous case, Fig.
\ref{fig:image} (a)  can be regarded as  a function of $\mathbb{R}^2$
\[f(x_1,x_2)=
   \left\{
		\begin{array}[c]{ll}
		0, & (x_1,x_2) \in [0,200]^2 \cup [200,400]^2 , \\
		255, & \textup{otherwise.}
		\end{array}
   \right.
\]
 Fig. \ref{fig:image} (b) and (d) are the 3-dimensional color graphs of $f$ and $R_{x_1}^{(\pi/4,\pi/4)}f$.
 
Recall  that the fractional Fourier multiplier of the fractional Riesz transform
$R_j^{\boldsymbol{\alpha}}$ is
\[
m_j^{\boldsymbol{\alpha}} (\boldsymbol{u}):=  -i \frac{\tilde{u}_j}{|\tilde{\boldsymbol{u}}|}.
 \]
 Graphs (a) and (b) in Fig. \ref{fig:phase} indicate that the fractional Riesz transform has the
effect of amplitude  reduction.

By  comparing  (c)/(e) and (d)/(f) in Fig. \ref{fig:phase}, as well as the real/imaginary part of
$\mathcal{F}_{\alpha} f$  and the imaginary/real part of $\mathcal{F}_{\boldsymbol{ \alpha}} (R_j^{\boldsymbol{ \alpha}}f)$
accordingly, it can be seen   that the
fractional Riesz transform has the effect of phase shifting.

Above all, Fig. \ref{fig:phase} shows that the fractional multiplier of $R_j^{\boldsymbol{\alpha}}$ is   $-i \tilde{u}_j/|\tilde{\boldsymbol{u}}|$, and thus,  the fractional Riesz transform is not only a phase-shift converter but also an amplitude attenuator.  This circumstance is quite different from that of  the  fractional  multiplier of $H_{\boldsymbol{\alpha}}$, which is only    a  phase-shift converter  with multiplier $-i{\rm sgn}((\pi-\alpha)u)$.

 \begin{figure}[H]
	\centering
	\subfigure[]{\includegraphics[width=0.4\linewidth]{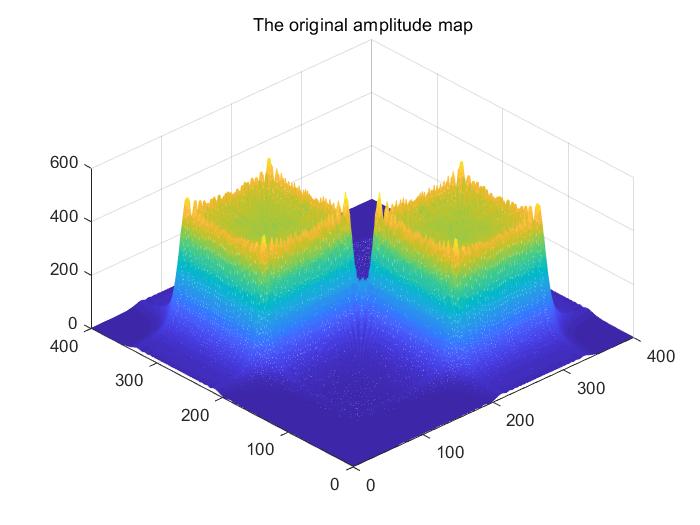} }
	\subfigure[]{\includegraphics[width=0.4\linewidth]{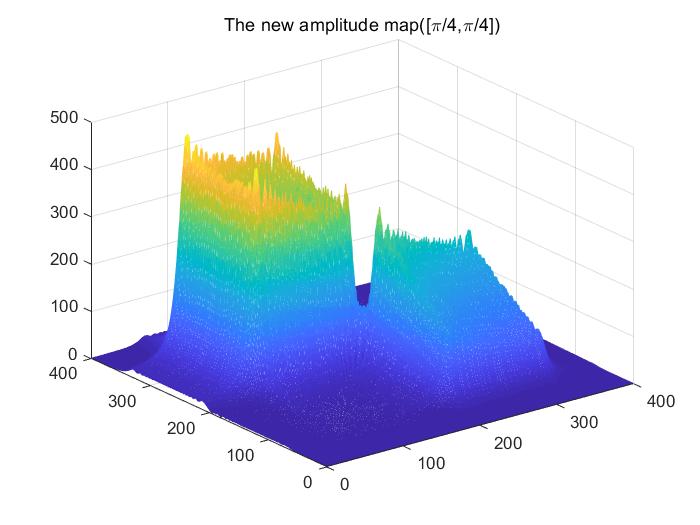} }
	\subfigure[]{\includegraphics[width=0.4\linewidth]{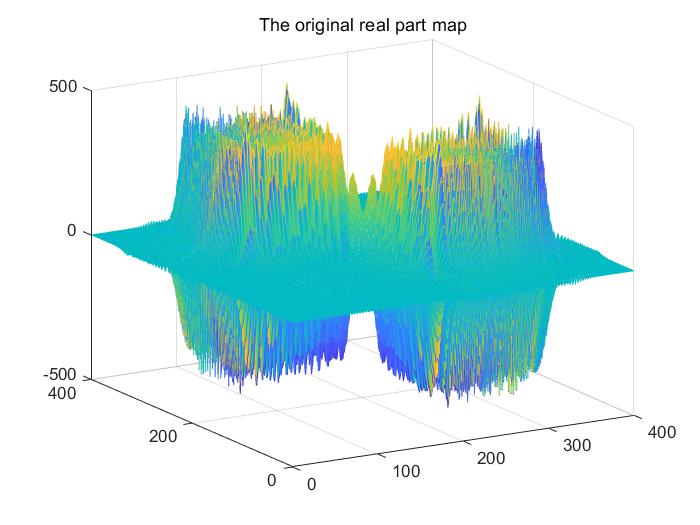} }
	\subfigure[]{\includegraphics[width=0.4\linewidth]{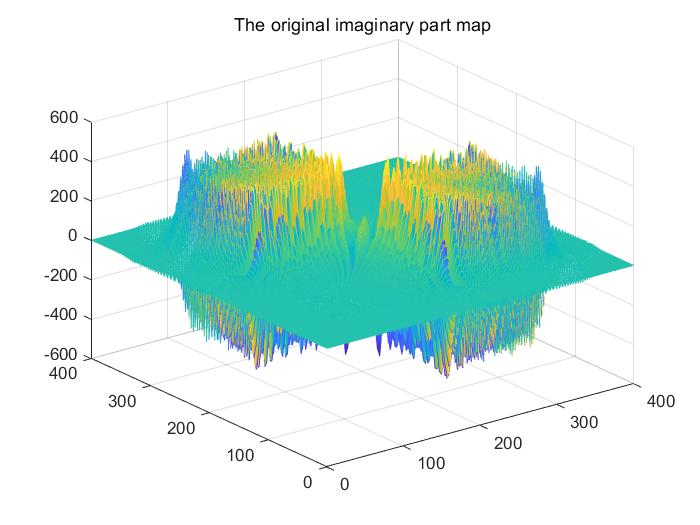} }
	\subfigure[]{\includegraphics[width=0.4\linewidth]{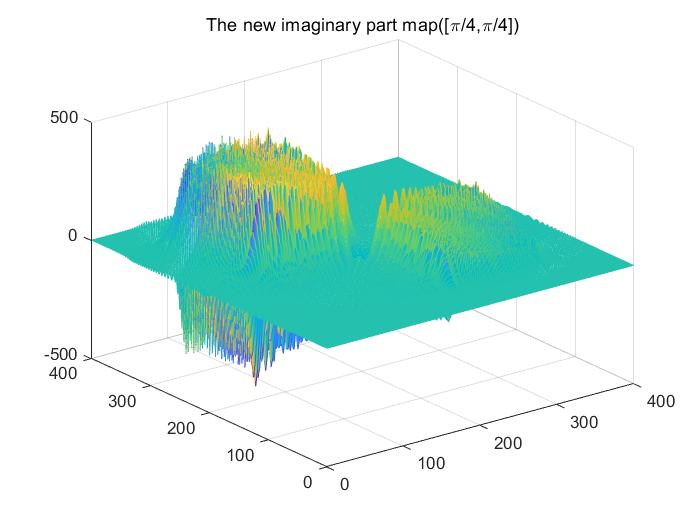} }
	\subfigure[]{\includegraphics[width=0.4\linewidth]{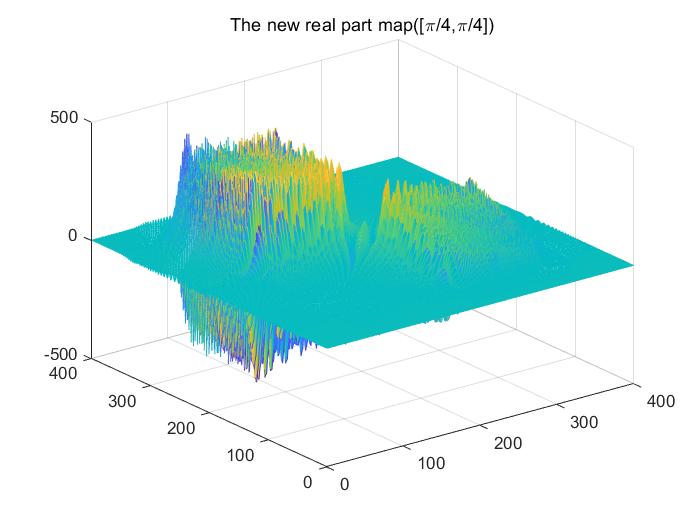} }
	    \caption{Phase-shifting and amplitude-reducing effect of the fractional Riesz transform in
	    the fractional Fourier domain of order $\boldsymbol{\alpha}=(\pi/4,\pi/4)$ on an image. }
	\label{fig:phase}%
\end{figure}

\section{Application of the fractional Riesz transform in edge detection}
\label{sect:ex}

\ \ \ \ Edge detection is a key technology in image processing. It is widely used in biometrics, image understanding, visual attention and other fields. Commonly used image feature extraction methods include the Roberts operator, Prewitt operator, Sobel operator, Laplacian operator and Canny operator. These algorithms extract features based on the gradient changes in the    pixel amplitudes. In  \cite{la,zl,zzm}, the authors introduced the  image edge detection methods based on the Riesz transform, which can avoid the influence of uneven illumination. Moreover, the Riesz transform has the characteristics of isotropy; therefore, the Riesz transform has more advantages in feature extraction. Based on the principle of  Riesz transform edge detection,  edge detection  based on the \emph{fractional Riesz transform} is proposed in this section.

When processing the two-dimensional signal $f(\boldsymbol{x})$, the fractional Riesz transform of $f(\boldsymbol{x})$ can be expressed as
\begin{align}\label{6.1}
(R_1^{\boldsymbol{\alpha}}f) (\boldsymbol{x}) =c_n \ {\rm p}.{\rm v}. \ e_{-\boldsymbol{\alpha}}(\boldsymbol{x})\left((e_{\boldsymbol{\alpha}}f)\ast \frac{x_1}{|\boldsymbol{x}|^3}\right) (\boldsymbol{x}),\\ \label{6.2}
(R_2^{\boldsymbol{\alpha}}f) (\boldsymbol{x}) =c_n \ {\rm p}.{\rm v}. \ e_{-\boldsymbol{\alpha}}(\boldsymbol{x})\left((e_{\boldsymbol{\alpha}}f)\ast \frac{x_2}{|\boldsymbol{x}|^3}\right) (\boldsymbol{x}),
\end{align}
 or
\begin{align}\label{6.3}
(R_1^{\boldsymbol{\alpha}}f) (\boldsymbol{x}) =\left[ \mathcal{F}_{-\boldsymbol{\alpha}}\left( -i
\frac{u_1\csc \alpha_1}{\sqrt{(u_1\csc \alpha_1)^2+(u_2\csc \alpha_2)^2}} (\mathcal{F}_{\boldsymbol{\alpha}} f) (\boldsymbol{u}) \right)\right] (\boldsymbol{x}),\\ \label{6.4}
(R_2^{\boldsymbol{\alpha}}f) (\boldsymbol{x}) =\left[ \mathcal{F}_{-\boldsymbol{\alpha}}\left( -i
\frac{u_2\csc \alpha_2}{\sqrt{(u_1\csc \alpha_1)^2+(u_2\csc \alpha_2)^2}} (\mathcal{F}_{\boldsymbol{\alpha}} f) (\boldsymbol{u}) \right)\right] (\boldsymbol{x}).
\end{align}

For an image $f(\boldsymbol{x})$, the monogenic signal is defined as the combination of $f(\boldsymbol{x})$ and its fractional Riesz transform.
$$(p(\boldsymbol{x}),q_1(\boldsymbol{x}),q_2(\boldsymbol{x}))=(f(\boldsymbol{x}), (R_1^{\boldsymbol{\alpha}}f) (\boldsymbol{x}),  (R_2^{\boldsymbol{\alpha}}f) (\boldsymbol{x})).$$
Therefore, the local amplitude value $A(\boldsymbol{x})$, local   orientation $\theta(\boldsymbol{x})$ and local phase $P(\boldsymbol{x})$ in the monogenic signal in the image can be expressed as
\begin{eqnarray*}
\left\{
\begin{aligned}
A(x)&=\sqrt{p(\boldsymbol{x})^2+|q_1(\boldsymbol{x})|^2+|q_2(\boldsymbol{x})|^2}\\
\theta(x)&=\tan^{-1}\left(\left|\frac{q_2(\boldsymbol{x})}{q_1(\boldsymbol{x})}\right|\right)\\
P(x)&=\tan^{-1}\left(\frac{p(\boldsymbol{x})}{\sqrt{|q_1(\boldsymbol{x})|^2+|q_2(\boldsymbol{x})|^2}}\right).
\end{aligned}
\right.
\end{eqnarray*}

In this paper, we use the fractional Riesz transform in the form of (\ref{6.3}) and (\ref{6.4}) for algorithm design because  the fractional Riesz transform can be decomposed into a combination of a FRFT, inverse FRFT and multiplier by the fractional multiplier Theorem \ref{2th:1}. Because the FRFT and inverse FRFT have  fast algorithms, compared with the   form  of (\ref{6.1}) and (\ref{6.2}), the computational complexity   of the  algorithm in the form of (\ref{6.3}) and (\ref{6.4}) is  reduced. Thus, a faster and more  efficient operation is realized.

\begin{figure}[H]
	\centering
	\subfigure[]{\includegraphics[width=0.4\linewidth]{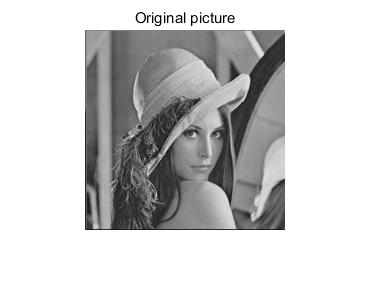} }
	\subfigure[]{\includegraphics[width=0.4\linewidth]{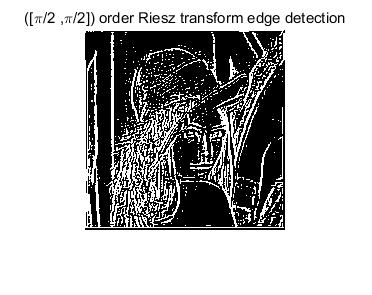} }
	    \caption{Original image and edge detection based on the fractional   Riesz transform of order $([\pi/2, \pi/2])$  (i.e., the classical Riesz transform) }
	\label{fig:6.1}%
\end{figure}

\begin{figure}[H]
	\centering
	\subfigure[]{\includegraphics[width=0.3\linewidth]{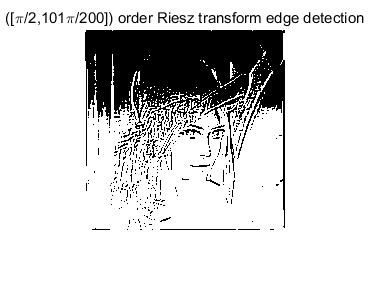} }
	\subfigure[]{\includegraphics[width=0.3\linewidth]{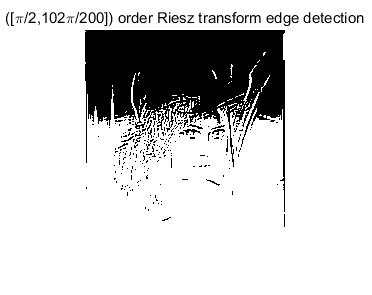} }
	\subfigure[]{\includegraphics[width=0.3\linewidth]{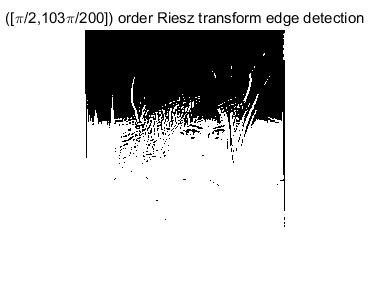} }
	\subfigure[]{\includegraphics[width=0.3\linewidth]{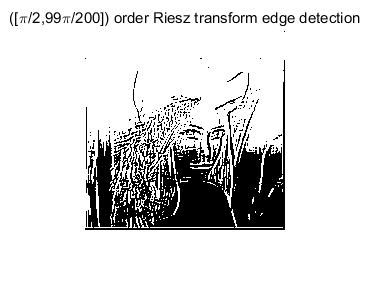} }
	\subfigure[]{\includegraphics[width=0.3\linewidth]{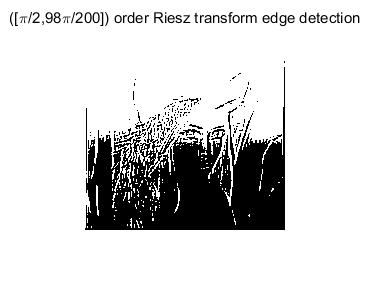} }
	\subfigure[]{\includegraphics[width=0.3\linewidth]{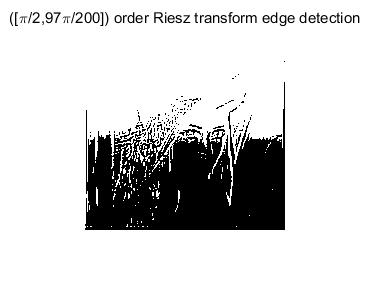} }
	    \caption{Extract the information of a specific position in the lateral direction of the image }
	\label{fig:6.2}%
\end{figure}

\begin{figure}[H]
	\centering
	\subfigure[]{\includegraphics[width=0.3\linewidth]{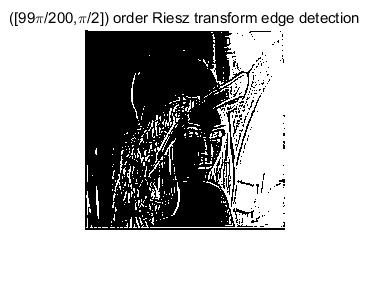} }
	\subfigure[]{\includegraphics[width=0.3\linewidth]{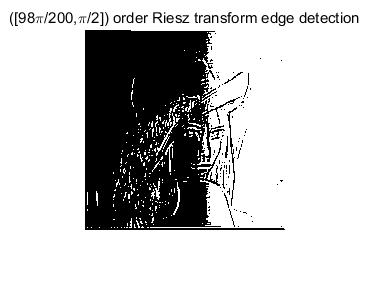} }
	\subfigure[]{\includegraphics[width=0.3\linewidth]{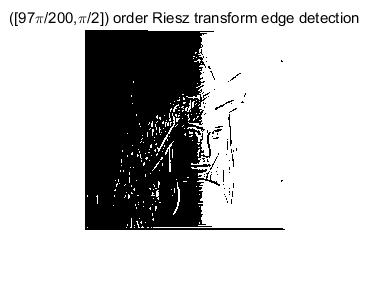} }
	\subfigure[]{\includegraphics[width=0.3\linewidth]{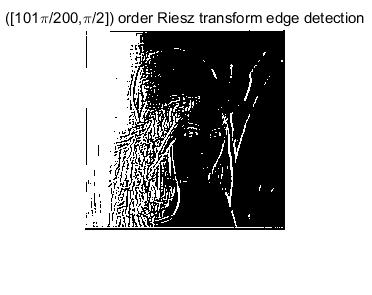} }
	\subfigure[]{\includegraphics[width=0.3\linewidth]{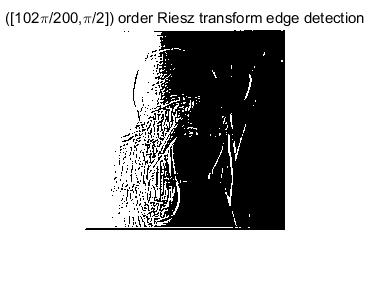} }
	\subfigure[]{\includegraphics[width=0.3\linewidth]{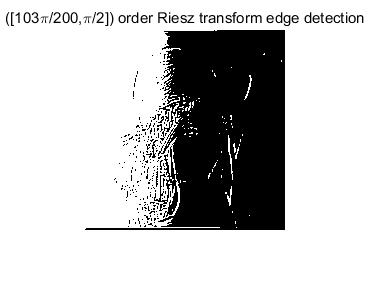} }
	    \caption{Extract the information of specific position in the longitudinal direction of the image}
	\label{fig:6.3}%
\end{figure}

 \begin{figure}[H]
	\centering
	\subfigure[]{\includegraphics[width=0.3\linewidth]{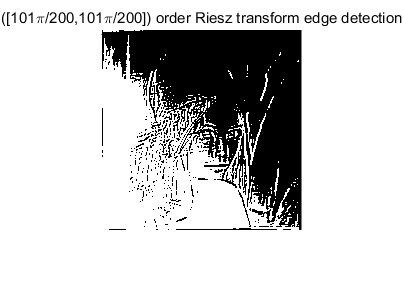} }
	\subfigure[]{\includegraphics[width=0.3\linewidth]{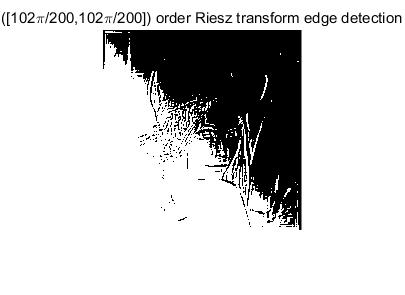} }
	\subfigure[]{\includegraphics[width=0.3\linewidth]{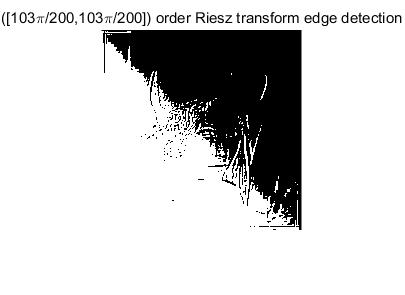} }
	\subfigure[]{\includegraphics[width=0.3\linewidth]{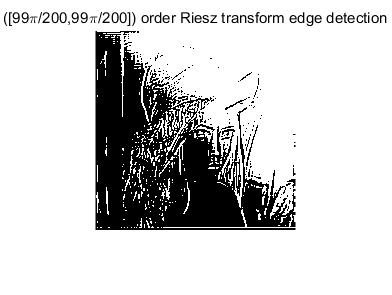} }
	\subfigure[]{\includegraphics[width=0.3\linewidth]{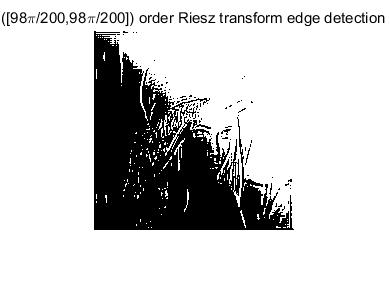} }
	\subfigure[]{\includegraphics[width=0.3\linewidth]{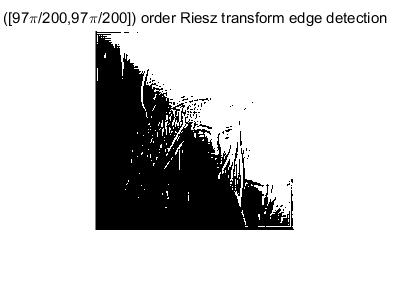} }
	    \caption{Extract the information of the specific position in the main diagonal of the image }
	\label{fig:6.4}%
\end{figure}

\begin{figure}[H]
	\centering
	\subfigure[]{\includegraphics[width=0.3\linewidth]{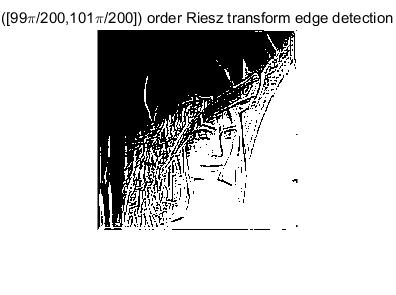} }
	\subfigure[]{\includegraphics[width=0.3\linewidth]{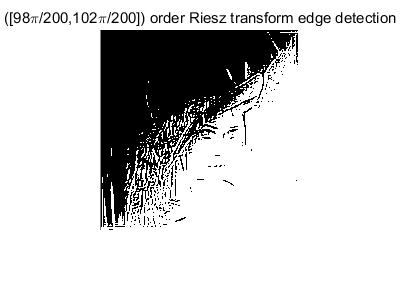} }
	\subfigure[]{\includegraphics[width=0.3\linewidth]{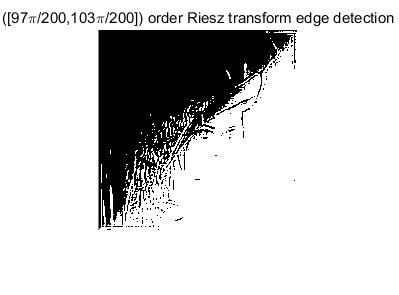} }
	\subfigure[]{\includegraphics[width=0.3\linewidth]{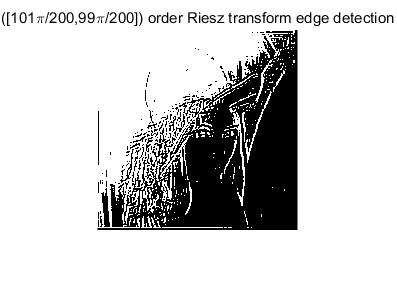} }
	\subfigure[]{\includegraphics[width=0.3\linewidth]{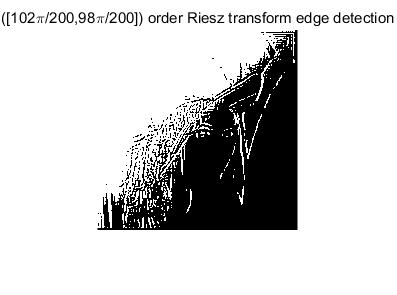} }
	\subfigure[]{\includegraphics[width=0.3\linewidth]{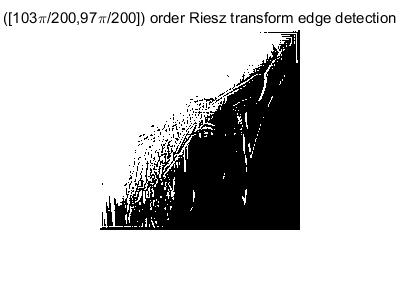} }
	    \caption{Extract the information of the specific position in the counter  diagonal of the image}
	\label{fig:6.5}%
\end{figure}

The simulation experiment will be  conducted    by using  the classical Lena image ((a) in Fig. \ref{fig:6.1}) as the test image. To better highlight the results of our simulation experiment, we choose an appropriate threshold value to binarize the images after the experiment in such a way  that our experimental results show a more obvious effect. Graph (b) in Fig. \ref{fig:6.1}  illustrates the result of edge detection based on the  fractional   Riesz transform of order $([\pi/2, \pi/2])$ (i.e., the classical Riesz transform).

Fig. \ref{fig:6.2} shows that when $\alpha_1 = \pi/2$ is fixed, if $\alpha_2$   decreases from $\pi/2$ ((d), (e), (f) in Fig. \ref{fig:6.2}), we extract information about the lateral up position. If $\alpha_2$   increases from $\pi/2$ ((a), (b), (c) in Fig. \ref{fig:6.2}), we   extract information about the lateral down position.  In conclusion,  by  fixed $\alpha_1 = \pi/2$, we can adjust $\alpha_2$ to extract   information on the specific position in the lateral direction.

Fig. \ref{fig:6.3} indicates  that when $\alpha_2= \pi/2$ is fixed and $\alpha_1$   decreases from $\pi/2$ ((a), (b), (c) in Fig. \ref{fig:6.3}), the longitudinal right position information is extracted. As $\alpha_1$   increases from $\pi/2$ ((d), (e), (f) in Fig. \ref{fig:6.3}), we extract the longitudinal left position information. In conclusion, when  fixing $\alpha_2 = \pi/2$, we can extract information of the specific longitudinal positions by adjusting $\alpha_1$.

Fig. \ref{fig:6.4} shows that when $\alpha_1$ and $\alpha_2$  increase or decrease the same size from $\pi/2$,    the  information on  the specific direction in  the main   diagonal is extracted.

Fig. \ref{fig:6.5} indicates that when $\alpha_1$   increases from $\pi/2$  and $\alpha_2$  decreases from $\pi/2$  by the same size,
the  information on  the specific direction in  the   antidiagonal is extracted.

The preceding simulation provides  a new edge detection tool  based on the 
  the fractional Riesz transform,   that   extracts both global features and  local features of images. 
  This numerical implementation  confirms 
the belief expressed in \cite{xxwqwy} that   amplitude, phase, and direction information can be   simultaneously extracted   by controlling the order of the fractional Riesz transform.     We  predict  that  very comprehensive  analysis and processing of multidimensional   signals, such as images, videos, 3D meshes and animations, can be achieved via the fractional Riesz transforms.

\section{Conclusions}
\label{sect:ex}
\ \ \ \ In this paper, we introduced the fractional Riesz transform and give the corresponding fractional multiplier theorem and its applications in partial differential equations. We also studied   properties of chirp singular integral operators, chirp Hardy spaces and chirp BMO spaces. 
We used the fractional Riesz transforms in concrete applications in  edge detection with 
surprisingly good results. Our experiments indicate that edge detection can  extract local  information in any direction by adjusting the order of the fractional Riesz transform. The algorithm complexity of  the fractional Riesz transform in the form of (\ref{6.3}) and (\ref{6.4}) is  reduced compared with the fractional Riesz transform of  (\ref{6.1}) and (\ref{6.2}).

\section*{References}
\begin{enumerate}

\bibitem[1]{bm1}
	A. Bultheel, H. Mart\'inez-Sulbaran, \href{https://www.sciencedirect.com/science/article/pii/S1063520304000132?via}{Computation of the fractional Fourier transform,} Appl.
	Comput. Harmon. Anal.  16 (2004),  182--202.
	
\bibitem[2]{bm2}
	A. Bultheel, H. Mart\'inez-Sulbaran, \href{https://nalag.cs.kuleuven.be/research/software/FRFT/}{\texttt{frft22d}: the matlab file of a 2D fractional Fourier transform}, 2004.  https://nalag.cs.kuleuven.be/research/software/in FRFT /frft22d.m.

\bibitem[3]{cz}A. P. Calder\'{o}n, A. Zygmund,  \href{https://www.jstor.org/stable/2372517?origin=crossref}{On singular integrals,}
                 Amer. J. Math.  78 (1956), 289--309.

\bibitem[4]{c} R. R. Coifman,   \href{https://www.impan.pl/pl/wydawnictwa/czasopisma-i-serie-wydawnicze/studia-mathematica/all/51/3/100348/a-real-variable-characterization-of-h-p}{A real variable characterization of $H^p$,} Studia Math. 51 (1974), 269--274.

\bibitem[5]{cw} R. R. Coifman,  G. Weiss,  \href{https://www.ams.org/journals/bull/1977-83-04/S0002-9904-1977-14325-5/}{Extensions of Hardy spaces and their use in analysis,}
               Bull. Amer. Math. Soc.  83 (1977),   569--645.

\bibitem[6]{cfgw}   W.  Chen, Z. W. Fu, L. Grafakos, Y. Wu,  \href{https://www.sciencedirect.com/science/article/pii/S1063520321000385?via}{Fractional Fourier transforms on $L^p$ and applications,}
                  Appl. Comput. Harmon. Anal.  55 (2021), 71--96.

\bibitem[7] {d} J.~Duoandikoetxea, \emph{Fourier analysis}, \emph{Graduate Studies in Mathematics}, vol.~29, American Mathematical Society, Providence, RI, 2001.

\bibitem [8]{f}    C. Fefferman,  \href{https://www.ams.org/journals/bull/1977-83-04/S0002-9904-1977-14325-5/}{Characterizations of bounded mean oscillation,}
                Bull. Amer. Math. Soc.   77   (1971), 587--588.

\bibitem[9] {fs}    C. Fefferman,  E. M. Stein, \href{https://projecteuclid.org/journals/acta-mathematica/volume-129/issue-none/Hp-spaces-of-several-variables/10.1007/BF02392215.full}{$H^p$ spaces of several variables,}
                 Acta Math.  129  (1972), 137--193.

\bibitem[10] {g1}   L. Grafakos, \href{https://doi.org/10.1007/978-1-4939-1194-3}{Classical Fourier analysis}, 3rd ed.,
                 Graduate Texts in Mathematics, vol.249, Springer, New York, 2014.

\bibitem[11] {g2}   L. Grafakos, \href{https://link.springer.com/book/10.1007/978-1-4939-1230-8}{Modern Fourier analysis}, 3rd ed.,
                Graduate Texts in Mathematics, vol.250, Springer, New York, 2014.

\bibitem[12] {g3}   D. Gabor,  {Theory of communications,}
                Inst. Elec. Eng,  93 (1946), 429--457.

\bibitem[13] {h}   S. L. Hahn,  \emph{Hilbert transforms in signal processing},
               Artech House Publish, 1996.

\bibitem[14] {k}   F. H. Kerr,  \href{https://www.sciencedirect.com/science/article/pii/0022247X88900947}{Namias fractional Fourier transforms on $L^2$ and applications to differential equations,}
               J. Math. Anal. Appl.  136 (1988), 404--418.

\bibitem[15]{kr}   R. Kamalakkannan, R. Roopkumar, \href{https://www.tandfonline.com/doi/full/10.1080/10652469.2019.1684486}{Multidimensional fractional Fourier transform and generalized fractional convolution},
               Integral Transforms Spec. Funct.  31 (2020), 152--165.

\bibitem[16] {la}  K. Langley, S. J. Anderson,  \href{https://www.sciencedirect.com/science/article/pii/S0042698910002750}{The Riesz transform and simultaneous representations of phaseenergy and orientation in spatial vision, } Vision Research,  50  (2010), 1748--1765.

\bibitem[17] {l}  R. H. Latter, \href{https://www.impan.pl/pl/wydawnictwa/czasopisma-i-serie-wydawnicze/studia-mathematica/all/62/1/102049/a-characterization-of-h-p-r-n-in-terms-of-atoms} {A characterization of $H^p(R^n)$ in terms of atoms,}
              Studia Math.   62 (1978), 93--101.

\bibitem[18] {l1}  S. Z, Lu. \emph{Four lectures on real $H^p$ spaces}, World Scientific Publishing Co. Inc., River Edge, NJ, 1995.

\bibitem[19] {ldy}   S. Z, Lu, Y. Ding, D. Y. Yan,   \emph{Singular integrals and related topics,}
                 World Scientific Publishing Co. Pte. Ltd., Hackensack, NJ, 2007.

\bibitem [20] {mk}   A. C. McBride, F. H.  Kerr, \href{https://academic.oup.com/imamat/article/39/2/159/648342}{On Namias's fractional Fourier transforms,}
                 IMA J. Appl. Math.  39 (1987), 159--175.

\bibitem[21]  {n}   V. Namias, \href{https://academic.oup.com/imamat/article/25/3/241/711477?login=true}{The fractional order Fourier transform and its application to quantum mechanics,}
                 IMA J. Appl. Math.  25 (1980), 241--265.

\bibitem[22]  {w1}    T. Walsh, \href{https://www.cambridge.org/core/journals/canadian-journal-of-mathematics/article/dual-of-h-p-r-n1-for-p-1/2E2AA19B15B0F99DD233093474418E84}{The dual of $H^p(\mathbb{R}^{n+1}_+)$ for $p<1$,}
                 Canad. J. Math.  25 (1973), 567--577.

\bibitem[23]  {py}  S. C. Pei, H. Yeh, \href{https://ieeexplore.ieee.org/document/885138/citations}{Discrete fractional Hilbert transform,} IEEE Trans  Circuits  and Systems-\uppercase\expandafter{\romannumeral2}.  47  (2000), 1307--1311.

\bibitem[24]  {s}  E. M. Stein,  \emph{Singular Integrals and Differentiability Properties of Functions,}
               Princeton Mathematical Series, vol. 30. Princeton University Press, 1970.

\bibitem[25]  {tw} M. H.  Taibleson, G. Weiss,   {The molecular characterization of certain Hardy spaces,}  Asterisque, 1980.

\bibitem[26]  {xxwqwy}  X. G. Xu,  G. L. Xu,  X. T. Wang,  X. J. Qin,  J. G. Wang,   C. T. Yi,   \href{http://en.cnki.com.cn/Article_en/CJFDTOTAL-TXJS201610001.htm}{Review of bidimensional hilbert transform.}
               Communications Technology.  49 (2016) 1265--1270.

\bibitem[27]{tlx}  R. Tao, G. Liang, X. Zhao, \href{https://link.springer.com/article/10.1007/s11265-009-0401-0}{An efficient FPGA-based implementation of fractional Fourier transform algorithm,}
               J. Signal Processing Systems.   60 (2010) 47--58.

\bibitem[28] {tlw}  R. Tao,  X. M. Li,  Y. Wang,  \href{https://ieeexplore.ieee.org/document/4479588}{Generalization of the fractional Hilbert transform,}
                IEEE Signal Processing Letters.  15  (2008), 365--368.

\bibitem [29]{w2}  N. Wiener,  \href{https://onlinelibrary.wiley.com/doi/pdf/10.1002/sapm19298170}{Hermitian polynomials and Fourier analysis,}
               J. Math. Phys.  8 (1929), 70--73.

\bibitem[30] {z2}  A. I. Zayed, \href{https://ieeexplore.ieee.org/document/704973}{Hilbert transform associated with the fractional Fourier transform,}
               IEEE Signal Process. Lett.  5  (1998), 206--208.

\bibitem [31] {z3}  A. I. Zayed, \href{https://www.tandfonline.com/doi/full/10.1080/10652469.2018.1471689}{Two-dimensional fractional Fourier transform and some of its properties},
                 Integral Transforms Spec Funct.  29 (2018), 553--570.

\bibitem[32]  {z4}  A. I. Zayed,  \href{https://link.springer.com/article/10.1007/s00041-017-9588-9}{A new perspective on the two-dimensional fractional Fourier transform and its relationship with the Wigner distribution,} J. Fourier Anal. Appl.  25  (2019),   460--487.

\bibitem[33]  {zl} L. Zhang,  H. Y. Li, \href{https://www.sciencedirect.com/science/article/pii/S0262885612001734}{Encoding local image patterns using Riesz transforms: With applications to palmprint and finger-knuckle-print recognition,} Image and Vision Computing.  30 (2012) 1043--1051.

\bibitem[34] {zzm} L. Zhang,  L. Zhang, X. Mou. \href{https://ieeexplore.ieee.org/document/5649275/references#references}{RFSIM: A feature based image quality assessment metric using Riesz transforms.} IEEE International Conference on Image Processing. IEEE, 2010.

\end{enumerate}
\end{document}